\date{}
\renewcommand{\uppercasenonmath}[1]{}
\numberwithin{equation}{section} \theoremstyle{plain}
\newtheorem*{thm*}{Main Theorem}
\newtheorem{thm}{Theorem}[section]
\newtheorem{cor}[thm]{Corollary}
\newtheorem*{cor*}{Corollary}
\newtheorem{lem}[thm]{Lemma}
\newtheorem*{lem*}{Lemma}
\newtheorem{fact}[thm]{Fact}
\newtheorem*{fact*}{Fact}
\newtheorem*{nota*}{Notation}
\newtheorem{prop}[thm]{Proposition}
\newtheorem*{prop*}{Proposition}
\newtheorem{rem}[thm]{Remark}
\newtheorem*{rem*}{Remark}
\newtheorem{exa}[thm]{Example}
\newtheorem*{exa*}{Example}
\newtheorem{df}[thm]{Definition}
\newtheorem*{df*}{Definition}
\newtheorem{setup}[thm]{Setup}
\newtheorem*{ack*}{ACKNOWLEDGEMENTS}
\newcommand{\K}{\mbox{\rm K}}
\newcommand{\Ch}{\mbox{\rm Ch}}
\newcommand{\Hom}{\mbox{\rm Hom}}
\newcommand{\Ext}{\mbox{\rm Ext}}
\newcommand{\dg}{\mbox{\rm dg}}
\newcommand{\id}{\mbox{\rm id}}
\newcommand{\pd}{\mbox{\rm pd}}
\newcommand{\Mod}{\mbox{\rm Mod}}
\newcommand{\Ho}{\mbox{\rm Ho}}
\newcommand{\Rep}{\mbox{\rm Rep}}
\begin{document}
\begin{center}
{\Large \bf Model Structures Arising from Extendable Cotorsion Pairs}
\footnotetext{* Corresponding author.

E-mail: 230238462@seu.edu.cn, ~wangjunpeng1218@163.com, and~z990303@seu.edu.cn}

\vspace{0.5cm} {\small Qingyu Shao$^1$, Junpeng Wang$^2$, Xiaoxiang Zhang$^{1,*}$ \\\

1.  \emph{School of Mathematics}, \emph{Southeast University}, \\ \emph{Nanjing 210096, People's Republic of China} \\

2. \emph{Department of Mathematics}, \emph{Northwest Normal University}, \\ \emph{Lanzhou 730070, People's Republic of China}
}
\end{center}

\vskip.5cm

\noindent{\bf Abstract:}
{The aim of this paper is to construct exact model structures
from so called \emph{extendable} cotorsion pairs.
Given a hereditary Hovey triple $(\mathcal{C}, \mathcal{W}, \mathcal{F})$
in a weakly idempotent complete exact category with enough projectives and injectives.
If one of the cotorsion pairs
$(\mathcal{C}\cap\mathcal{W}, \mathcal{F})$ and
$(\mathcal{C}, \mathcal{W}\cap\mathcal{F})$ is extendable,
then there is a chain of hereditary Hovey triples whose corresponding
homotopy categories coincide.
As applications,
we obtain a new description of the $Q$-shaped derived categories introduced by
Holm and J\o rgensen.
We can also interpret the Krause's recollement
in terms of ``$n$-dimensional'' homotopy categories.
Finally, we have two approaches to get ``$n$-dimensional''
hereditary Hovey triples, which are proved to coincide,
in the category Rep$(Q,\mathcal{A})$ of all representations of
a rooted quiver $Q$ with values in an abelian category $\mathcal{A}$.}

\vskip.2cm

\noindent{\bf Keywords:} {extendable cotorsion pair; Hovey triple; Krause's recollement;
	$Q$-shaped derived category; representation category of a quiver.} \vskip.2cm

\noindent{\small  {\bf 2020 Mathematics Subject Classification}: 18N40; 18G20; 16E35; 16G20} \vspace{10pt} \vspace{10pt}

\section{\bf Introduction}

The aim of this paper is to construct exact model structures from so called \emph{extendable} cotorsion pairs (see Definition \ref{D-1}).
It is well known that model structure, introduced by Quillen in \cite{Quillen1967},
is an important tool for formally introducing homotopy theory into a category.
In \cite{Hov2002}, Hovey made a general study of abelian model structures and established a correspondence between abelian model structures and certain (pairs of) cotorsion pairs
in an abelian category.
Such correspondence is referred to as Hovey correspondence in the literature.
Later, Gillespie investigated the exact model structures \cite{Gil11} and extended the Hovey correspondence to the settings of exact categories.

El Maaouy \cite{EL} and Gao \emph{et al} \cite{GZ2024} constructed some chains of model structures from modules of finite Gorenstein dimension.
For example,
El Maaouy proved, among other things, that there exists a hereditary Hovey triple
$(\mathcal{GF}_n, \mathcal{PGF}^{\perp}, \mathcal{C}_n)$
in the category of left modules over a ring $R$ for each non-negative integer $n$
and there are triangulated equivalences
$$\underline{\mathcal{GF}_n\cap \mathcal{C}_n} \simeq \cdots \simeq
\underline{\mathcal{GF}_1\cap \mathcal{C}_1} \simeq
\underline{\mathcal{GF}\cap \mathcal{C}}.$$
Similarly,
Gao \emph{et al} obtained a chain of hereditary Hovey triples
$(\mathcal{PGF}_n, \mathcal{PGF}^{\perp}, \mathcal{P}_n^{\perp})$
whose corresponding homotopy categories coincide.
For more details and related results, we refer the reader to \cite{EL, GZ2024} and the references therein.
Inspired by \cite{EL, GZ2024, Gil11}, we consider model structures arising from extendable cotorsion pairs in a weakly idempotent complete exact category with enough projectives and injectives.
If there is a hereditary Hovey triple $(\mathcal{C}, \mathcal{W}, \mathcal{F})$ and one of the cotorsion pairs,
$(\mathcal{C}\cap\mathcal{W}, \mathcal{F})$ and $(\mathcal{C}, \mathcal{W}\cap\mathcal{F})$, is extendable,
then we can obtain a chain of hereditary Hovey triples whose corresponding homotopy categories coincide (see Theorems \ref{modle structure} and \ref{modle structure 1}).

As applications,
we obtain a new description of the $Q$-shaped derived categories
introduced by
Holm and J\o rgensen \cite{HJ2022} as a generalization of the ordinary derived category $\mathbf{D}(R)$ of a ring $R$ (see Theorem \ref{DQ} and Example \ref{D}).
We can also interpret the Krause's recollement
(see, e.g., \cite{Bec2014, Gil2016, Kra2005})
\[
\xymatrixrowsep{5pc}
\xymatrixcolsep{6pc}
\xymatrix
{\K(\mathrm{ex}\widetilde{\mathcal{I}})  \ar[r]^{} &\K(\mathrm{dw}(\mathcal{I})) \ar@<-3ex>[l]_{} \ar@<3ex>[l]^{} \ar[r]^{}  &\mathbf{D}(R)\ar@<-3ex>[l]_{} \ar@<3ex>[l]^{},
} \]
in terms of ``$n$-dimensional'' homotopy categories
(see Proposition \ref{KR-i}).
Finally, based on \cite[Theorem B]{D-O2021}
and our Theorem \ref{modle structure},
we have two approaches to get ``$n$-dimensional''
hereditary Hovey triples, which are proved to coincide,
in the category Rep$(Q,\mathcal{A})$ of all representations of
a rooted quiver $Q$ with values in an abelian category $\mathcal{A}$
(see Theorems \ref{Q-p} and \ref{Q-i}).

\section{\bf Preliminaries}

Throughout this article,
all rings $R$ are assumed to be associative rings with identity
and all modules are unitary left $R$-modules.
In this section we mainly recall some necessary concepts and facts.
Let $\mathcal{A}$ be an additive category.
Recall that a monomorphism $f: A \to B$ in $\mathcal{A}$ is \emph{split} if there is a morphism $g: B \to A$ such that $gf = 1_A$.
A \emph{split epimorphism} is defined dually.
If every split monomorphism has a cokernel, or equivalently,
every split epimorphism has a kernel,
then $\mathcal{A}$ is said to be \emph{weakly idempotent complete} or \emph{WIC} for short (see \cite{B2010}).

The concept of the exact category is originally due to Quillen \cite{Quillen1973}.
Recall that an \emph{exact category} is an additive category $\mathcal{A}$
equiped with an exact structure $\mathcal{E}$
consisting of a class of \emph{conflations}
$A \rightarrowtail B \twoheadrightarrow C$,
which is a kernel-cokernel pair in $\mathcal{A}$.
Here $A \rightarrowtail B$ (resp., $B \twoheadrightarrow C$)
is called an \emph{inflation} (resp., \emph{deflation}).
One can define the concept of an \emph{acyclic sequence} in terms of
conflations (see \cite[Definition 8.8]{B2010}).
We denote the classes of projectives and injectives by $\mathcal{P}$
and $\mathcal{I}$, respectively.
We recommend \cite{B2010} to interested readers for more details.

\subsection{\bf Cotorsion pairs }

Let $\mathcal{A}$ be an exact category and
$\mathcal{X}$, $\mathcal{Y}$ classes of objects of $\mathcal{A}$.
The pair $(\mathcal{X}, \mathcal{Y})$ is called a
\emph{cotorsion pair} \cite{Gil11}
if $\mathcal{X}^{\perp} = \mathcal{Y}$ and ${^{\perp}\mathcal{Y}} = \mathcal{X}$,
where $\mathcal{X}^{\perp} = \{M\in \mathcal{A}~ |~\Ext_{\mathcal{A}}^{1}(X, M) = 0, \forall X\in\mathcal{X}\}$
and ${^{\perp}\mathcal{Y}}$ is defined dually.
The \emph{kernel} of a cotorsion pair $(\mathcal{X}, \mathcal{Y})$
is defined as $\mathcal{X} \cap \mathcal{Y}$.
A cotorsion pair $(\mathcal{X}, \mathcal{Y})$ is said to be
\emph{generated by a set} $\mathcal{S}$
if $\mathcal{S}^\perp = \mathcal{Y}$.

A cotorsion pair $(\mathcal{X}, \mathcal{Y})$ is said to be
\emph{complete} if for every $M \in \mathcal{A}$,
there are conflations
$Y \rightarrowtail X \twoheadrightarrow M$
and
$M \rightarrowtail Y^{\prime} \twoheadrightarrow X^{\prime}$,
where $X$, $X^{\prime} \in \mathcal{X}$ and
$Y$, $Y^{\prime} \in \mathcal{Y}$.
We note that the existence of such conflations
($\forall M \in \mathcal{A}$) are equivalent
in case $\mathcal{A}$ has enough projectives and injectives
by a similar argument to \cite[Proposition 7.1.7]{EJ}.

A cotorsion pair $(\mathcal{X}, \mathcal{Y})$ is said to be \emph{hereditary}
if (1) $\mathcal{X}$ is closed under kernels of deflations
and (2) $\mathcal{Y}$ is closed under cokernels of inflations.
In particular, for a complete cotorsion pair $(\mathcal{X}, \mathcal{Y})$ in a WIC exact category,
the above two conditions (1) and (2) are equivalent,
and moreover,
they are also equivalent to $\Ext_{\mathcal{A}}^n(X, Y) = 0$
for each $X \in \mathcal{X}, Y \in \mathcal{Y}$ and $n \geq 1$ (see \cite[Lemma 6.17]{Sto201401}).

\subsection{\bf Dimensions }

Let $\mathcal{A}$ be an exact category and
$(\mathcal{X}, \mathcal{Y})$ a cotorsion pair.
Given $M \in \mathcal{A}$,
the $\mathcal{X}$-\emph{projective dimension} of $M$,
denoted by $\mathcal{X}\text{-}\pd_{\mathcal{A}}(M)$,
is defined as follows:

$$\mathcal{X}\text{-}\pd_{\mathcal{A}}(M)
= \inf \left \{n\in \mathbb{N} \hspace{0.2cm}
\begin{array}{ |l} \text{there is an acyclic sequence} \\
	X_n \rightarrowtail X_{n-1} \to \cdots \to X_1 \to X_0 \twoheadrightarrow M \\
	\text{in } \mathcal{A} \text{ with each } X_i\in\mathcal{X}\end{array}\right\}.
$$
If no such an acyclic sequence exists,
then we set $\mathcal{X}\text{-}\pd_\mathcal{A}(M) = \infty$.
Dually, we have the definition of
$\mathcal{Y}$-\emph{injective dimension} of $M$,
denoted by $\mathcal{Y}\text{-}\id_\mathcal{A}(M)$.

The proofs of the following two lemmas are standard and we omit the details.

\begin{lem} \label{06-1}
	Let $\mathcal{A}$ be a WIC exact category with enough projectives and injectives,
	and $(\mathcal{X}, \mathcal{X}^{\perp})$ a complete and hereditary cotorsion pair in $\mathcal{A}$.
	Then the following are equivalent for any $M\in \mathcal{A}$
	and any integer $n\geq0$:

    \emph{(1)} $\mathcal{X}\text{-}\pd_{\mathcal{A}}(M)\leq n$.
	
	\emph{(2)}	There is an acyclic sequence $ X_n\rightarrowtail \cdots\to X_1\to X_0\twoheadrightarrow M $ with each $X_i\in \mathcal{X}$.
	
	\emph{(3)}	 For any acyclic sequence $ K\rightarrowtail X_{n-1}\to\cdots\to X_1\to X_0\twoheadrightarrow M $, if each $X_i$ is in $\mathcal{X}$, then so is $K$.
		
	\emph{(4)}	 $\Ext_\mathcal{A}^{n+i}(M,Y)=0$ for all $Y \in \mathcal{X}^{\perp}$ and $i \geq 1$.
\end{lem}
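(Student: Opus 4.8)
The proof proceeds by establishing the cycle of implications $(1)\Rightarrow(2)\Rightarrow(3)\Rightarrow(4)\Rightarrow(1)$, or possibly by proving $(2)\Leftrightarrow(3)$ and then linking to $(4)$ via a dimension shift. The implication $(1)\Rightarrow(2)$ is essentially a matter of unwinding the definition of $\mathcal{X}\text{-}\pd$: from an acyclic sequence of length $\le n$ with middle terms in $\mathcal{X}$, one pads with zeros on the left (or notes that $0\in\mathcal{X}$) to obtain one of length exactly $n$; the WIC hypothesis is what lets us handle the truncation/splicing cleanly, since kernels of deflations between conflations need to exist. The implication $(2)\Rightarrow(1)$ is immediate from the definition.

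The substantive steps are $(2)\Leftrightarrow(3)$ and the equivalence with $(4)$. For $(3)\Rightarrow(2)$ one takes a genuine $\mathcal{X}$-resolution (it exists since the cotorsion pair is complete, hence $\mathcal{X}$ is special precovering) and applies (3) to its $n$-th syzygy. For $(2)\Rightarrow(3)$, the key tool is a generalized Schanuel lemma: given two acyclic sequences of length $n$ over the same $M$ with all terms in $\mathcal{X}$ except possibly the leftmost, the leftmost terms differ by summands of objects in $\mathcal{X}$; since a complete hereditary cotorsion pair has $\mathcal{X}$ closed under direct summands and under kernels of deflations, one concludes the leftmost term of an arbitrary such sequence lies in $\mathcal{X}$. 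This is where heredity is used essentially: repeatedly taking syzygies of the $\mathcal{X}$-resolution stays inside $\mathcal{X}$, so the comparison map can be built degree by degree, and heredity guarantees the syzygies of the arbitrary sequence land in $\mathcal{X}$ once we know it for the resolution.

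For the link with $(4)$, I would use the standard dimension-shifting argument: given the acyclic sequence in (2), break it into conflations $K_{i+1}\rightarrowtail X_i\twoheadrightarrow K_i$ (with $K_0=M$, $K_n=X_n$), and use the long exact sequence of $\Ext_{\mathcal{A}}^\bullet(-,Y)$ together with heredity (which gives $\Ext_{\mathcal{A}}^{j}(X,Y)=0$ for all $j\ge 1$, $X\in\mathcal{X}$, $Y\in\mathcal{X}^\perp$, as recorded in the excerpt via \cite[Lemma 6.17]{Sto201401}) to shift: $\Ext_{\mathcal{A}}^{n+i}(M,Y)\cong\Ext_{\mathcal{A}}^{i}(X_n,Y)=0$ for $i\ge1$. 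Conversely, $(4)\Rightarrow(3)$: take any acyclic sequence $K\rightarrowtail X_{n-1}\to\cdots\to X_0\twoheadrightarrow M$ with $X_i\in\mathcal{X}$; dimension shifting gives $\Ext_{\mathcal{A}}^{1}(K,Y)\cong\Ext_{\mathcal{A}}^{n+1}(M,Y)=0$ for all $Y\in\mathcal{X}^\perp$, whence $K\in{}^\perp(\mathcal{X}^\perp)=\mathcal{X}$ because the cotorsion pair is $(\mathcal{X},\mathcal{X}^\perp)$.

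The main obstacle is purely the exact-category bookkeeping rather than any conceptual difficulty: one must verify that all the kernels, cokernels, pullbacks and pushouts invoked in the Schanuel-type comparison and in the dimension shifting actually exist and are conflations, which is exactly what the WIC hypothesis plus "enough projectives and injectives" is there to supply (see \cite[Section 8]{B2010} for the relevant horseshoe and syzygy machinery). Since the paper explicitly says the proof is standard and omits it, I would at most sketch the Schanuel step and the two dimension shifts, citing \cite{B2010} and \cite{Sto201401} for the exact-category versions of the tools, and leave the routine diagram chases to the reader.
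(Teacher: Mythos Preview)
Your proposal is correct and aligns with the paper's treatment: the paper explicitly states that the proof is standard and omits the details, and your sketch of the cycle $(1)\Leftrightarrow(2)$, $(2)\Rightarrow(4)\Rightarrow(3)\Rightarrow(2)$ via dimension shifting (using heredity to get $\Ext^{j}_{\mathcal{A}}(X,Y)=0$ for all $j\ge1$) is precisely the standard argument. The Schanuel route you mention for $(2)\Rightarrow(3)$ is unnecessary once you have $(4)\Rightarrow(3)$, but it is also valid; either way your outline is sound and nothing further is needed.
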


\begin{lem} \label{06-2}
	Let $\mathcal{A}$ be a WIC exact category with enough projectives and injectives,
	and $(\mathcal{X}, \mathcal{X}^{\perp})$ a complete and hereditary cotorsion pair in $\mathcal{A}$.
	Then the following hold for any conflation $A \rightarrowtail B \twoheadrightarrow C$
	in $\mathcal{A}$:
	
	\emph{(1)}	$\mathcal{X}\text{-}\pd_\mathcal{A}(A)\leq \max\{\mathcal{X}\text{-}\pd_\mathcal{A}(B), \mathcal{X}\text{-}\pd_\mathcal{A}(C)-1\}$ with the equality if $\mathcal{X}\text{-}\pd_\mathcal{A}(B)$ $\neq \mathcal{X}\text{-}\pd_\mathcal{A}(C)$. If $\mathcal{X}\text{-}\pd_\mathcal{A}(C)=0$, then $\mathcal{X}\text{-}\pd_\mathcal{A}(C)-1$ should be interpreted as 0.
	
	\emph{(2)}	$\mathcal{X}\text{-}\pd_\mathcal{A}(B)\leq \max\{\mathcal{X}\text{-}\pd_\mathcal{A}(A), \mathcal{X}\text{-}\pd_\mathcal{A}(C)\}$ with the equality if $\mathcal{X}\text{-}\pd_\mathcal{A}(C)\neq \mathcal{X}\text{-}\pd_\mathcal{A}(A)+1$.
	
	\emph{(3)}	$\mathcal{X}\text{-}\pd_\mathcal{A}(C)\leq \max\{\mathcal{X}\text{-}\pd_\mathcal{A}(B), \mathcal{X}\text{-}\pd_\mathcal{A}(A)+1\}$ with the equality if $\mathcal{X}\text{-}\pd_\mathcal{A}(B)$ $\neq \mathcal{X}\text{-}\pd_\mathcal{A}(A)$.
\end{lem}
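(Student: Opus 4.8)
The plan is to convert all three assertions into statements about vanishing of $\Ext$-groups and then run the usual dimension-shifting argument along a single long exact sequence. Fix $Y\in\mathcal{X}^{\perp}$. Since $\mathcal{A}$ has enough projectives and $(\mathcal{X},\mathcal{X}^{\perp})$ is hereditary, the conflation $A\rightarrowtail B\twoheadrightarrow C$ produces a long exact sequence
\[
\cdots\longrightarrow\Ext_{\mathcal{A}}^{k}(C,Y)\longrightarrow\Ext_{\mathcal{A}}^{k}(B,Y)\longrightarrow\Ext_{\mathcal{A}}^{k}(A,Y)\xrightarrow{\ \delta\ }\Ext_{\mathcal{A}}^{k+1}(C,Y)\longrightarrow\cdots,
\]
while Lemma \ref{06-1}(4) tells us that for any object $M$ and any $n\geq0$ one has $\mathcal{X}\text{-}\pd_{\mathcal{A}}(M)\leq n$ precisely when $\Ext_{\mathcal{A}}^{n+i}(M,Y)=0$ for all $Y\in\mathcal{X}^{\perp}$ and all $i\geq1$. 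First I would dispose of the cases where one of the three dimensions is infinite: the displayed inequalities are then vacuous, and the corresponding equalities are obtained by letting $k$ grow in the sequence above. So in what follows all three dimensions may be assumed finite.

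For the three inequalities, let $n$ denote the maximum appearing on the right-hand side and take $i\geq1$. In (1) the group $\Ext_{\mathcal{A}}^{n+i}(A,Y)$ sits between $\Ext_{\mathcal{A}}^{n+i}(B,Y)$ and $\Ext_{\mathcal{A}}^{n+i+1}(C,Y)$ in the sequence, and both of these vanish because $n\geq\mathcal{X}\text{-}\pd_{\mathcal{A}}(B)$ and $n+i\geq\mathcal{X}\text{-}\pd_{\mathcal{A}}(C)$; in (2) the group $\Ext_{\mathcal{A}}^{n+i}(B,Y)$ sits between $\Ext_{\mathcal{A}}^{n+i}(C,Y)$ and $\Ext_{\mathcal{A}}^{n+i}(A,Y)$, both zero; in (3) the group $\Ext_{\mathcal{A}}^{n+i}(C,Y)$ sits between $\Ext_{\mathcal{A}}^{n+i-1}(A,Y)$ and $\Ext_{\mathcal{A}}^{n+i}(B,Y)$, both zero. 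Exactness then forces the middle group to vanish, and Lemma \ref{06-1}(4) converts this back into the desired bound on the dimension. The convention in (1) that $\mathcal{X}\text{-}\pd_{\mathcal{A}}(C)-1$ is read as $0$ when $\mathcal{X}\text{-}\pd_{\mathcal{A}}(C)=0$ causes no trouble, since $\mathcal{X}\text{-}\pd_{\mathcal{A}}(C)=0$ already gives $\Ext_{\mathcal{A}}^{i}(C,Y)=0$ for all $i\geq1$.

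For the equality clauses I would, in each case, split according to which of the two prescribed dimensions is strictly larger and then exhibit a single $Y\in\mathcal{X}^{\perp}$ that detects the asserted dimension; the reverse bound is the inequality already established. For instance, in (1) with $\mathcal{X}\text{-}\pd_{\mathcal{A}}(B)>\mathcal{X}\text{-}\pd_{\mathcal{A}}(C)$ put $n=\mathcal{X}\text{-}\pd_{\mathcal{A}}(B)$ and choose $Y$ with $\Ext_{\mathcal{A}}^{n}(B,Y)\neq0$; as $\mathcal{X}\text{-}\pd_{\mathcal{A}}(C)\leq n-1$ forces $\Ext_{\mathcal{A}}^{n}(C,Y)=0$, the map $\Ext_{\mathcal{A}}^{n}(B,Y)\to\Ext_{\mathcal{A}}^{n}(A,Y)$ is injective and hence $\Ext_{\mathcal{A}}^{n}(A,Y)\neq0$, giving $\mathcal{X}\text{-}\pd_{\mathcal{A}}(A)\geq n$. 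If instead $\mathcal{X}\text{-}\pd_{\mathcal{A}}(B)<\mathcal{X}\text{-}\pd_{\mathcal{A}}(C)$, set $n=\mathcal{X}\text{-}\pd_{\mathcal{A}}(C)-1$, pick $Y$ with $\Ext_{\mathcal{A}}^{n+1}(C,Y)\neq0$, observe that $\Ext_{\mathcal{A}}^{n+1}(B,Y)=0$, so $\delta$ is surjective onto $\Ext_{\mathcal{A}}^{n+1}(C,Y)$ and therefore $\Ext_{\mathcal{A}}^{n}(A,Y)\neq0$. The equality clauses in (2) and (3) follow by the mirror-image choices, each time killing the one flanking $\Ext$-group needed to make the relevant connecting or comparison map injective, respectively surjective, in the appropriate degree.

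The whole proof is bookkeeping along that single long exact sequence, so the only genuinely substantive point — which I would either record as a short preliminary lemma or pin down by citing the homological algebra of exact categories (e.g. \cite{B2010}) — is that over $\mathcal{A}$, with enough projectives and the hereditary hypothesis in force, the contravariant functors $\Ext_{\mathcal{A}}^{n}(-,Y)$ do assemble into such a connected long exact sequence in every degree and agree with the groups used in Lemma \ref{06-1}. Once that is granted, each of the six assertions is exactly the three- or four-term exact-sequence chase indicated above, and the infinite-dimension cases are the same chase run in unboundedly large degree.
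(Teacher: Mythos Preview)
Your argument is the standard dimension-shifting proof via the long exact $\Ext$-sequence and Lemma~\ref{06-1}(4), and it is correct; the paper itself omits the proof, noting only that it is standard, so your approach is exactly what is implicitly intended. One small remark: the long exact sequence in $\Ext_{\mathcal{A}}^{*}(-,Y)$ follows from the existence of enough projectives alone (see e.g.\ \cite{B2010}), while the hereditary hypothesis is what makes Lemma~\ref{06-1}(4) available---you might separate these two ingredients in your write-up, but this does not affect the validity of the argument.
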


\subsection{\bf Model structures }

We refer the reader to \cite{Quillen1967} and \cite{Hov1999}
for some basic concepts and results on model structure and model category.

According to \cite[Definition 3.1]{Gil11},
an \emph{exact model structure} in an exact category $\mathcal{A}$
is a model structure such that
(trivial) cofibrations are exactly inflations
with a (trivially) cofibrant cokernel,
and (trivial) fibrations are exactly deflations
with a (trivially) fibrant kernel.
The concept of \emph{abelian model structure} in \cite{Hov2002}
is a special case of that of exact model structure.

Let $\mathcal{A}$ be an exact category,
a triple ($\mathcal{C}$, $\mathcal{W}$, $\mathcal{F}$)
of classes of objects of $\mathcal{A}$ is called a \emph{Hovey triple}
if ($\mathcal{C} \cap \mathcal{W}$, $\mathcal{F}$)
and ($\mathcal{C}$, $\mathcal{W} \cap \mathcal{F}$)
are complete cotorsion pairs,
and $\mathcal{W}$ is \emph{thick} in $\mathcal{A}$
(i.e. $\mathcal{W}$ is closed under direct summands,
and if two out of three objects in a conflation are in $\mathcal{W}$,
then so is the third one).
A Hovey triple is called \emph{hereditary}
if these two cotorsion pairs
($\mathcal{C} \cap \mathcal{W}$, $\mathcal{F}$)
and ($\mathcal{C}$, $\mathcal{W} \cap \mathcal{F}$) are hereditary.

Hovey \cite{Hov2002} showed that there is a translation between
abelian model structures and Hovey triples in an abelian category.
Gillespie \cite{Gil11} extended this correspondence
to WIC exact categories.
Let $\mathcal{A}$ be a WIC exact category,
there is a one-to-one correspondence between
exact model structures and Hovey triples,
thus we may consider Hovey triples and exact model structures
to be the same when there is no risk of confusion.

Given a Frobenius category $\mathcal{B}$
with the class of projective-injective objects $\mathcal{P}$,
Happel \cite[Theorem 2.6]{H1988} proved that the stable category
$\underline{\mathcal{B}}=\mathcal{B}/\mathcal{P}$ is a triangulated category.
For a WIC exact category $\mathcal{A}$,
the homotopy category induced by a Hovey triple
$\mathcal{M} = (\mathcal{C}, \mathcal{W}, \mathcal{F})$ in $\mathcal{A}$
is denoted by $\Ho(\mathcal{M})$.
If $\mathcal{M}$ is hereditary, by virtue of \cite[Theorem 6.21]{Sto201401},
$\mathcal{C}\cap \mathcal{F}$ is a Frobenius category
whose class of projective-injective objects is $\mathcal{C}\cap \mathcal{W} \cap \mathcal{F}$,
and there is a triangle equivalence
$$\Ho(\mathcal{M})\simeq \underline{\mathcal{C}\cap \mathcal{F}}=(\mathcal{C}\cap \mathcal{F})/(\mathcal{C}\cap \mathcal{W}\cap \mathcal{F}).$$

\section{\bf Main results}

Let $\mathcal{A}$ be an exact category.
Given an integer $n\geq 0$ and a cotorsion pair
$(\mathcal{X}, \mathcal{Y})$,
we denote by $\mathcal{X}_{n}$
(resp., $\mathcal{Y}_{n}$) the subclass of $\mathcal{A}$
consisting of all objects with $\mathcal{X}$-projective
(resp., $\mathcal{Y}$-injective) dimension at most $n$.
For the sake of brevity,
we write $\mathcal{X}_n^{\perp}$ and $^{\perp}\mathcal{Y}_n$
instead of $(\mathcal{X}_n)^{\perp}$ and $^{\perp}(\mathcal{Y}_n)$,
respectively.

\begin{df} \label{D-1}
	{\rm Let $\mathcal{A}$ be an exact category and
		$(\mathcal{X}, \mathcal{Y})$ a complete cotorsion pair in $\mathcal{A}$.
		We say that $(\mathcal{X}, \mathcal{Y})$ is \emph{left}
		(resp., \emph{right}) \emph{extendable} if the pair $(\mathcal{X}_n, \mathcal{X}_n^{\perp})$
		(resp., $(^{\perp}\mathcal{Y}_n, \mathcal{Y}_n)$) is also a complete cotorsion pair for any integer $n\geq 0$.
	}
\end{df}

\begin{exa} \label{E}
	{\rm Let $R$ be a ring.
		Consider the category $R$-Mod of all left $R$-modules.
		Let $\mathcal{P}(R)$ (resp., $\mathcal{I}(R)$ and $\mathcal{F}(R)$)
		be the class of all projective (resp., injective and flat)
		left $R$-modules.
		It turns out that the cotorsion pairs
		$(\mathcal{P}(R), R\text{-}\Mod)$ and
		$(\mathcal{F}(R), \mathcal{F}(R)^{\perp})$ are left extendable,
		and $(R\text{-}\Mod, \mathcal{I}(R))$ is right extendable
		(see, e.g., \cite[Theorems 4.1.3, 4.1.7 and 4.1.12]{GT2006}).
	}
\end{exa}

\begin{rem} \label{T-2-1}
	{\rm If a left (resp., right) extendable cotorsion pair $(\mathcal{X}, \mathcal{Y})$
		in a WIC exact category with enough projectives and injectives is hereditary,
		then, for any integer $n\geq 0$,
		the complete cotorsion pair
		$(\mathcal{X}_n, \mathcal{X}_n^{\perp})$
		(resp., $(^{\perp}\mathcal{Y}_n, \mathcal{Y}_n)$)
		is hereditary as well by Lemma \ref{06-2} and \cite[Lemma 6.17]{Sto201401}.
	}
\end{rem}

Inspired by the spirit of \cite[Theorem 4.11]{SS2020},
we obtain the following result, which is of interest in its own right, will be used to prove our main Theorem \ref{modle structure}.

\begin{thm}\label{SStype}
	Let $\mathcal{A}$ be a WIC exact category with enough projectives and injectives,
	and $(\mathcal{C}, \mathcal{W}, \mathcal{F})$ a hereditary Hovey triple in $\mathcal{A}$.
	If the cotorsion pair $(\mathcal{C}\cap \mathcal{W}, \mathcal{F})$ is left extendable,
	then the following assertions are equivalent for any $M \in \mathcal{A}$ and any non-negative integer $n$.
	
	\emph{(1)} $M \in \mathcal{C}_{n}$.
	
	\emph{(2)} There is a conflation $K \rightarrowtail G \twoheadrightarrow M$ in $\mathcal{A}$
	with $G \in \mathcal{C}$ and
	$K \in (\mathcal{C}\cap \mathcal{W})_{n-1}$
	such that
	$$0 \to \Hom_\mathcal{A}(M, Y) \to \Hom_\mathcal{A}(G, Y) \to \Hom_\mathcal{A}(K, Y) \to  0$$
	is exact for all $Y \in (\mathcal{C}\cap \mathcal{W})_{n}^{\perp}$
	\emph{(}If $n=0$, then $(\mathcal{C}\cap \mathcal{W})_{n-1}$ should be interpreted as $\{0\}$\emph{)}.
	
	\emph{(3)} $\Ext_\mathcal{A} ^{1}(M, H) = 0$ for every $H \in \mathcal{W} \cap (\mathcal{C}\cap \mathcal{W})_{n}^{\perp}$.
	
	\emph{(4)} There is a conflation $M \rightarrowtail L \twoheadrightarrow N $ in $\mathcal{A}$
	with $L \in (\mathcal{C}\cap \mathcal{W})_{n}$ and $N \in \mathcal{C}$.
\end{thm}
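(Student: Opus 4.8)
The plan is to prove the implications $(1)\Rightarrow(4)\Rightarrow(3)\Rightarrow(1)$ and, separately, $(1)\Rightarrow(2)\Rightarrow(1)$, so that all four become equivalent; in the step $(1)\Rightarrow(2)$ I allow myself to invoke $(3)$, which by then is known to be equivalent to $(1)$. Throughout I abbreviate $\mathcal{D}=(\mathcal{C}\cap\mathcal{W})_n^{\perp}$ and use freely that in a hereditary Hovey triple $\mathcal{C}={}^{\perp}(\mathcal{W}\cap\mathcal{F})$ and $\mathcal{C}\cap\mathcal{W}={}^{\perp}\mathcal{F}$, that $\mathcal{I}\subseteq\mathcal{W}\cap\mathcal{F}$ (the right half of a cotorsion pair always contains the injectives) and hence $\mathcal{I}\subseteq\mathcal{D}\subseteq\mathcal{F}$, and that $(\mathcal{C}\cap\mathcal{W})_m\subseteq\mathcal{W}$ for every $m$ by thickness of $\mathcal{W}$. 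By left extendability together with Remark \ref{T-2-1}, $((\mathcal{C}\cap\mathcal{W})_n,\mathcal{D})$ is again a complete hereditary cotorsion pair; this is the only place the hypothesis enters, and it enters essentially. Two implications are soft: $(2)\Rightarrow(1)$ is immediate from Lemma \ref{06-2}(3) for $(\mathcal{C},\mathcal{W}\cap\mathcal{F})$ since $G\in\mathcal{C}=\mathcal{C}_0$ and $K\in(\mathcal{C}\cap\mathcal{W})_{n-1}\subseteq\mathcal{C}_{n-1}$ (the $\Hom$-exactness is not even needed here), and $(4)\Rightarrow(3)$ follows by applying $\Hom_{\mathcal{A}}(-,H)$ to the conflation of $(4)$: any $H\in\mathcal{W}\cap\mathcal{D}$ lies in $\mathcal{W}\cap\mathcal{F}=\mathcal{C}^{\perp}$ because $\mathcal{D}\subseteq\mathcal{F}$, so $\Ext^{1}_{\mathcal{A}}(L,H)=0=\Ext^{2}_{\mathcal{A}}(N,H)$ and therefore $\Ext^{1}_{\mathcal{A}}(M,H)=0$. (One also gets $(4)\Rightarrow(1)$ directly from Lemma \ref{06-2}(1).)

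For $(1)\Rightarrow(4)$ I would first record the auxiliary fact $\mathcal{W}\cap\mathcal{C}_{m}\subseteq(\mathcal{C}\cap\mathcal{W})_{m}$ for all $m\ge0$, proved by induction on $m$: a special $(\mathcal{C}\cap\mathcal{W})$-precover of such an object coming from $(\mathcal{C}\cap\mathcal{W},\mathcal{F})$ has kernel in $\mathcal{W}\cap\mathcal{F}$ by thickness and of strictly smaller $\mathcal{C}$-projective dimension by Lemma \ref{06-2}(1). Now given $M\in\mathcal{C}_n$, take the conflation $V\rightarrowtail G\twoheadrightarrow M$ provided by completeness of $(\mathcal{C},\mathcal{W}\cap\mathcal{F})$, so $G\in\mathcal{C}$, $V\in\mathcal{W}\cap\mathcal{F}$, and $\mathcal{C}\text{-}\pd_{\mathcal{A}}(V)\le n-1$ by Lemma \ref{06-2}(1); hence $V\in(\mathcal{C}\cap\mathcal{W})_{n-1}$ by the auxiliary fact. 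Next take the conflation $G\rightarrowtail W\twoheadrightarrow C$ from completeness of $(\mathcal{C},\mathcal{W}\cap\mathcal{F})$; since $G,C\in\mathcal{C}$ and $\mathcal{C}$ is extension-closed, $W\in\mathcal{C}\cap\mathcal{W}\cap\mathcal{F}\subseteq(\mathcal{C}\cap\mathcal{W})_{0}$. Composing the inflations $V\rightarrowtail G\rightarrowtail W$ produces conflations $V\rightarrowtail W\twoheadrightarrow E$ and $M\rightarrowtail E\twoheadrightarrow C$; Lemma \ref{06-2}(3) for $(\mathcal{C}\cap\mathcal{W},\mathcal{F})$ then gives $(\mathcal{C}\cap\mathcal{W})\text{-}\pd_{\mathcal{A}}(E)\le n$, so $M\rightarrowtail E\twoheadrightarrow C$ is exactly the conflation of $(4)$. (The case $n=0$ is handled at once by the conflation $M\rightarrowtail W\twoheadrightarrow C$ alone, together with extension-closedness of $\mathcal{C}$.)

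For $(3)\Rightarrow(1)$ the heart is a cosyzygy computation. First I would propagate $(3)$ in degree: $\mathcal{W}\cap\mathcal{D}$ contains $\mathcal{I}$ and is closed under cokernels of inflations (thickness of $\mathcal{W}$ and heredity of $((\mathcal{C}\cap\mathcal{W})_n,\mathcal{D})$), so embedding any $H\in\mathcal{W}\cap\mathcal{D}$ in an injective and dimension shifting yields $\Ext^{i}_{\mathcal{A}}(M,H)=0$ for all $i\ge1$, i.e. $\Ext^{\ge1}_{\mathcal{A}}(M,\mathcal{W}\cap\mathcal{D})=0$. Now fix $V\in\mathcal{W}\cap\mathcal{F}$ and let $V_n$ be an $n$-th injective cosyzygy of $V$. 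Since $\mathcal{W}\cap\mathcal{F}$ is likewise closed under injective cosyzygies, $V_n\in\mathcal{W}\cap\mathcal{F}$; and for every $Q\in(\mathcal{C}\cap\mathcal{W})_n$ one has $\Ext^{1}_{\mathcal{A}}(Q,V_n)\cong\Ext^{n+1}_{\mathcal{A}}(Q,V)=0$ by Lemma \ref{06-1}(4) for $(\mathcal{C}\cap\mathcal{W},\mathcal{F})$, so $V_n\in\mathcal{D}$ and thus $V_n\in\mathcal{W}\cap\mathcal{D}$. Dimension shifting through the $n$ injectives gives $\Ext^{j}_{\mathcal{A}}(M,V)\cong\Ext^{j-n}_{\mathcal{A}}(M,V_n)=0$ for all $j\ge n+1$; as $V\in\mathcal{W}\cap\mathcal{F}=\mathcal{C}^{\perp}$ was arbitrary, Lemma \ref{06-1}(4) for $(\mathcal{C},\mathcal{W}\cap\mathcal{F})$ gives $\mathcal{C}\text{-}\pd_{\mathcal{A}}(M)\le n$, i.e. $M\in\mathcal{C}_n$. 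This closes the equivalence $(1)\Leftrightarrow(3)\Leftrightarrow(4)$.

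Finally, $(1)\Rightarrow(2)$ is the main obstacle, since one must produce a single conflation that is $\Hom_{\mathcal{A}}(-,Y)$-exact for \emph{all} $Y\in\mathcal{D}$, not just one with the right end terms. Take the same conflation $V\rightarrowtail G\twoheadrightarrow M$ as in $(1)\Rightarrow(4)$, so $G\in\mathcal{C}$ and $K:=V\in(\mathcal{C}\cap\mathcal{W})_{n-1}$; left exactness of $\Hom_{\mathcal{A}}(-,Y)$ is automatic, so only surjectivity of $\Hom_{\mathcal{A}}(G,Y)\to\Hom_{\mathcal{A}}(V,Y)$ for $Y\in\mathcal{D}$ must be checked. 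Using completeness of $((\mathcal{C}\cap\mathcal{W})_n,\mathcal{D})$, choose a conflation $V\rightarrowtail Y_V\twoheadrightarrow Q_V$ with $Y_V\in\mathcal{D}$ and $Q_V\in(\mathcal{C}\cap\mathcal{W})_n$; since $V$ and $Q_V$ lie in $\mathcal{W}$, thickness forces $Y_V\in\mathcal{W}$, whence $Y_V\in\mathcal{W}\cap\mathcal{D}$ and, by the now-available $(3)$, $\Ext^{1}_{\mathcal{A}}(M,Y_V)=0$. Therefore the extension class of $V\rightarrowtail G\twoheadrightarrow M$ dies in $\Ext^{1}_{\mathcal{A}}(M,Y_V)$, so the inflation $V\rightarrowtail Y_V$ extends to a morphism $G\to Y_V$; and any $\varphi\colon V\to Y$ with $Y\in\mathcal{D}$ extends along $V\rightarrowtail Y_V$ because $\Ext^{1}_{\mathcal{A}}(Q_V,Y)=0$, hence factors through $G$. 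This yields the required exactness, and with $(2)\Rightarrow(1)$ the proof is complete. In short, the delicate point — and the precise reason left extendability is indispensable — is that the extended cotorsion pair $((\mathcal{C}\cap\mathcal{W})_n,(\mathcal{C}\cap\mathcal{W})_n^{\perp})$ supplies a preenvelope of $V$ whose target, by thickness of $\mathcal{W}$, lands inside $\mathcal{W}\cap\mathcal{D}$, which is exactly what makes the obstruction to the $\Hom$-exactness vanish.
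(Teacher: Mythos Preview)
Your proof is correct, but it takes a genuinely different route from the paper's, and the comparison is illuminating. The paper argues $(4)\Rightarrow(1)\Rightarrow(4)$, then $(1)\Rightarrow(2)\Rightarrow(3)\Rightarrow(4)$; you instead close $(1)\Leftrightarrow(3)\Leftrightarrow(4)$ first and treat $(2)$ separately. Three places where the arguments diverge are worth noting. For $(1)\Rightarrow(4)$, the paper runs a direct induction with two successive pushouts, whereas you first isolate the inclusion $\mathcal{W}\cap\mathcal{C}_m\subseteq(\mathcal{C}\cap\mathcal{W})_m$ (which is exactly the paper's Corollary~\ref{SStype-0}, stated there \emph{after} the theorem) and then produce the conflation in one stroke via the Noether-type factorization $V\rightarrowtail G\rightarrowtail W$; your packaging is cleaner. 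For $(3)\Rightarrow(1)$ you give a cosyzygy argument: push any $V\in\mathcal{W}\cap\mathcal{F}$ forward $n$ steps through injectives to land in $\mathcal{W}\cap\mathcal{D}$, then invoke Lemma~\ref{06-1}(4). The paper instead proves $(3)\Rightarrow(4)$ by taking an approximation $K\rightarrowtail L_1\twoheadrightarrow L$ from the extended pair $((\mathcal{C}\cap\mathcal{W})_n,\mathcal{D})$ and showing it splits. This is where the two proofs spend the left-extendability hypothesis differently: the paper uses completeness of the extended pair in $(3)\Rightarrow(4)$ and does \emph{not} need it in $(1)\Rightarrow(2)$ (its pushout into an object $B\in(\mathcal{C}\cap\mathcal{W})_n$ suffices), whereas you avoid it in $(3)\Rightarrow(1)$---your ``heredity of $((\mathcal{C}\cap\mathcal{W})_n,\mathcal{D})$'' is in fact automatic from Lemma~\ref{06-2}(1) applied to $(\mathcal{C}\cap\mathcal{W},\mathcal{F})$---but genuinely need completeness in $(1)\Rightarrow(2)$ to build the $\mathcal{D}$-preenvelope $V\rightarrowtail Y_V$. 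So your closing remark that left extendability ``enters essentially'' in $(1)\Rightarrow(2)$ is true for \emph{your} proof but not for the paper's; combining your $(3)\Rightarrow(1)$ with the paper's $(1)\Rightarrow(2)$ would in fact eliminate the hypothesis from those implications altogether.

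One small omission: in $(1)\Rightarrow(2)$ your construction presupposes $n\ge 1$ (it needs $V\in(\mathcal{C}\cap\mathcal{W})_{n-1}$, which for $n=0$ would force $V=0$); the case $n=0$ is trivial with $K=0$, $G=M$, but you should say so explicitly.
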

\begin{proof}
	The arguments in (1)$\Leftrightarrow$(4) and the first part of (1)$\Rightarrow$(2) can be traced back to Auslander-Buchweitz approximation theory (see for instance \cite{AB1989}).
	
	(4) $\Rightarrow$ (1).
	Assume that there is a conflation $ M \rightarrowtail L \twoheadrightarrow N$ in $\mathcal{A}$
	with $L \in (\mathcal{C}\cap \mathcal{W})_{n}$ and $N \in \mathcal{C}$.
	Since $(\mathcal{C}\cap \mathcal{W})_{n} \subseteq \mathcal{C}_{n}$,
	we have $M \in \mathcal{C}_{n}$ by Lemma \ref{06-2}(1).
	
	(1) $\Rightarrow$ (4).
	Assume that $M \in \mathcal{C}_{n}$.
	We will proceed by induction on $n$.
	For $n = 0$, there is a conflation $$M \rightarrowtail L \twoheadrightarrow N$$
	with $L \in \mathcal{W}\cap \mathcal{F}$ and $N \in \mathcal{C}$
	since $(\mathcal{C}, ~\mathcal{W}\cap \mathcal{F})$ is a complete cotorsion pair.
	Notice further that
	$\mathcal{C}$ is closed under extensions and $M, N \in \mathcal{C}$,
	so we have
	$L \in \mathcal{C}\cap \mathcal{W} \cap \mathcal{F} \subseteq \mathcal{C}\cap \mathcal{W}$.
	
	Now assume $n \ge 1$.
	In view of the completeness of the cotorsion pair
	$(\mathcal{C}, ~\mathcal{W}\cap \mathcal{F})$,
	there is a conflation $K \rightarrowtail C_{0} \twoheadrightarrow M$
	with $K \in \mathcal{W}\cap \mathcal{F}$ and $C_{0} \in \mathcal{C}$.
	By Lemma \ref{06-2}(1) it follows that $K \in \mathcal{C}_{n-1}$.
	Hence, the induction hypothesis yields a conflation
	$$K \rightarrowtail L_{1} \twoheadrightarrow N_{1}$$
	with $L_{1} \in (\mathcal{C} \cap \mathcal{W})_{n-1}$ and
	$N_{1} \in \mathcal{C}$.
	Consider the following pushout diagram:
	\begin{displaymath}
		\xymatrix{
			K \ar@{>->}[r] \ar@{>->}[d] & C_{0} \ar@{->>}[r] \ar@{>-->}[d] & M \ar@{=}[d] \\
			L_{1} \ar@{>-->}[r] \ar@{->>}[d] & D \ar@{->>}[r] \ar@{->>}[d] & M  \\
			N_{1} \ar@{=}[r]  & N_{1} &
		}
	\end{displaymath}
	where $D \in \mathcal{C}$ since $C_{0}, N_{1} \in \mathcal{C}$.
	By the case where $n = 0$, there is a conflation
	$$ D \rightarrowtail L_{2} \twoheadrightarrow N $$
	with $L_{2} \in \mathcal{C}\cap \mathcal{W}$ and $N \in \mathcal{C}$.
	Now we have another pushout diagram
	\begin{displaymath}
		\xymatrix{
			L_{1} \ar@{>->}[r] \ar@{=}[d] & D \ar@{->>}[r] \ar@{>->}[d] & M \ar@{>-->}[d] \\
			L_{1} \ar@{>->}[r] & L_{2} \ar@{-->>}[r] \ar@{->>}[d] & L \ar@{->>}[d] \\
			& N \ar@{=}[r]  & N
		}
	\end{displaymath}
	Applying Lemma \ref{06-2}(3) to the conflation $L_1 \rightarrowtail L_2 \twoheadrightarrow L$,
	one can obtain $L\in (\mathcal{C}\cap \mathcal{W})_n$.
	Thus the right column in the above diagram is as desired.
	
	(1) $\Rightarrow$ (2).
	Assume that $M \in \mathcal{C}_{n}$.
	The case when $n = 0$ is clear.
	Now suppose $n \ge 1$.
	Since $(\mathcal{C}, ~\mathcal{W}\cap \mathcal{F})$
	is a complete cotorsion pair,
	we can take a conflation
	$$(\dag)\quad K \rightarrowtail G \twoheadrightarrow M$$
	with $K \in \mathcal{W}\cap \mathcal{F}$ and $G \in \mathcal{C}$.
	By Lemma \ref{06-2}(1), one can see that
	$K \in \mathcal{C}_{n-1}\cap \mathcal{W}\cap \mathcal{F}$.
	Now by (4) (noting that (1)$\Rightarrow$(4) is proved above),
	there exists a conflation
	$$(\ddag)\quad K \rightarrowtail L \twoheadrightarrow N $$
	with $L \in (\mathcal{C}\cap \mathcal{W})_{n-1}$ and $N \in \mathcal{C}$.
	Since $\Ext_\mathcal{A} ^{1}(N, K) = 0$,
	it follows that the conflation $(\ddag)$ splits
	and hence $K \in (\mathcal{C}\cap \mathcal{W})_{n-1}$.
	
	By a similar argument to the case when $n = 0$
	in the proof of (1) $\Rightarrow$ (4),
	one can take a conflation
	$$ G \rightarrowtail S \twoheadrightarrow T $$
	with $S \in \mathcal{C}\cap \mathcal{W}\cap \mathcal{F}$
	and $T \in \mathcal{C}$.
	Now, consider the pushout diagram:
	\begin{displaymath}
		\xymatrix{
			K \ar@{>->}[r] \ar@{=}[d] & G \ar[r] \ar@{>->}[d] & M \ar@{>-->}[d]  \\
			K \ar@{>->}[r] & S \ar@{-->>}[r] \ar@{->>}[d] & B \ar@{->>}[d] \\
			& T \ar@{=}[r] & T
		}
	\end{displaymath}
	One has $B \in (\mathcal{C}\cap \mathcal{W})_{n}$ by Lemma \ref{06-2}(3),
	and hence $\Ext_\mathcal{A} ^{1}(B, Y) = 0$
	for any $Y \in (\mathcal{C}\cap \mathcal{W})_{n}^{\perp}$.
	Furthermore, we have the following commutative diagram with exact rows
	\begin{displaymath}
		\xymatrix{
			0\ar[r] & \Hom_\mathcal{A}(B, Y)\ar[r] \ar[d] & \Hom_\mathcal{A}(S, Y)\ar[r] \ar[d] & \Hom_\mathcal{A}(K, Y)\ar[r] \ar@{=}[d] & \Ext_\mathcal{A} ^{1}(B, Y) = 0 \\
			0\ar[r] & \Hom_\mathcal{A}(M, Y)\ar[r]        & \Hom_\mathcal{A}(G, Y)\ar[r]^{f}    & \operatorname{Hom}_\mathcal{A}(K, Y)  	}
	\end{displaymath}
	It is easy to see that $f: \operatorname{Hom}_\mathcal{A}(G, Y) \to \operatorname{Hom}_\mathcal{A}(K, Y)$ is surjective.
	Therefore, (2) follows.		
	
	(2) $\Rightarrow$ (3).
	The case when $n=0$ is trivial.
	Now let $n > 0$ and assume that there is a conflation
	$K \rightarrowtail G \twoheadrightarrow M$ in $\mathcal{A}$
	with $G \in \mathcal{C}$ and
	$K \in (\mathcal{C}\cap \mathcal{W})_{n-1}$
	such that
	$$0 \to \Hom_\mathcal{A}(M, Y) \to \Hom_\mathcal{A}(G, Y) \to \Hom_\mathcal{A}(K, Y) \to  0$$
	is exact for all $Y \in (\mathcal{C}\cap \mathcal{W})_{n}^{\perp}$.
	Now (3) holds in view of the following exact sequence
	$$\Hom_\mathcal{A}(G, H) \overset{g}{\longrightarrow} \Hom_\mathcal{A}(K, H) \to
	\Ext_\mathcal{A} ^{1}(M, H) \to \Ext_\mathcal{A} ^{1}(G, H)$$
	where $g$ is surjective and \Ext$_\mathcal{A} ^{1}(G, H) = 0$
	for all $H \in ~\mathcal{W}\cap (\mathcal{C}\cap \mathcal{W})_{n}^{\perp} \subseteq{\mathcal{W} \cap \mathcal{F}} = \mathcal{C}^{\perp}$.
	
	(3) $\Rightarrow$ (4).
	Assume that $\Ext_\mathcal{A} ^{1}(M, H) = 0$ for every
	$H \in \mathcal{W} \cap (\mathcal{C}\cap \mathcal{W})_{n}^{\perp}$.
	Since the cotorsion pair $(\mathcal{C}, \mathcal{W}\cap \mathcal{F})$
	is complete, there is a conflation
	$$ M \rightarrowtail L \twoheadrightarrow N $$
	with $L \in \mathcal{W}\cap \mathcal{F}$ and $N \in \mathcal{C}$.
	We have to show that $L \in (\mathcal{C}\cap \mathcal{W})_{n}$.
	Indeed, for every
	$H \in \mathcal{W} \cap (\mathcal{C}\cap \mathcal{W})_{n}^{\perp}$,
	$0 = \Ext_\mathcal{A} ^{1}(N, H) \to \Ext_\mathcal{A} ^{1}(L, H) \to \Ext_\mathcal{A} ^{1}(M, H) = 0$
	yields $\Ext_\mathcal{A} ^{1}(L, H) = 0$.
	This shows $L \in {}^{\perp}(\mathcal{W} \cap (\mathcal{C}\cap \mathcal{W})_{n}^{\perp})$.
	Moreover,
	since the cotorsion pair $(\mathcal{C}\cap \mathcal{W}, \mathcal{F})$
	is left extendable,
	there is a complete cotorsion pair
	$((\mathcal{C}\cap \mathcal{W})_{n}, (\mathcal{C}\cap \mathcal{W})_{n}^{\perp})$.
	Consequently, we can obtain a conflation
	$$(\sharp)\quad  K \rightarrowtail L_{1} \twoheadrightarrow L $$
	with $L_{1} \in (\mathcal{C}\cap \mathcal{W})_{n}$ and
	$K \in (\mathcal{C}\cap \mathcal{W})_{n}^{\perp}$.
	Take an acyclic sequence
	$$X_n \rightarrowtail X_{n-1} \to \cdots \to X_1 \to X_0 \twoheadrightarrow L_1 $$
	with each $X_i \in \mathcal{C}\cap \mathcal{W}$.
	Since $\mathcal{W}$ is thick,
	it is easy to see that $L_1 \in \mathcal{W}$.
	Simultaneously, we have $L \in \mathcal{W}\cap \mathcal{F}\subseteq \mathcal{W}$.
	Thus the thickness of $\mathcal{W}$ guarantees $K\in \mathcal{W}$.
	As we have shown $L \in {}^{\perp}(\mathcal{W} \cap (\mathcal{C}\cap \mathcal{W})_{n}^{\perp})$,
	it follows that $\Ext_\mathcal{A} ^{1}(L, K) = 0$,
	which means the conflation $(\sharp)$ is split.
	Therefore $L \in (\mathcal{C}\cap \mathcal{W})_{n}$ as desired.
\end{proof}

We remark that ``$Y \in (\mathcal{C}\cap \mathcal{W})_{n}^{\perp}$''
in condition (2) of the Theorem \ref{SStype} can be
replaced by ``$Y \in \mathcal{W}\cap(\mathcal{C}\cap \mathcal{W})_{n}^{\perp}$''.

\begin{cor}\label{SStype-0}
	Let $\mathcal{A}$ be a WIC exact category with enough projectives and injectives,
	and $(\mathcal{C}, \mathcal{W}, \mathcal{F})$ a hereditary Hovey triple in $\mathcal{A}$.
	Then $\mathcal{C}_{n} \cap \mathcal{W} = (\mathcal{C}\cap \mathcal{W})_{n}$
	for any non-negative integer $n$.		
\end{cor}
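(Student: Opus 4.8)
The plan is to establish the two inclusions separately. For the easy inclusion $(\mathcal{C}\cap\mathcal{W})_n \subseteq \mathcal{C}_n\cap\mathcal{W}$, I would take $M\in(\mathcal{C}\cap\mathcal{W})_n$ together with an acyclic sequence $G_n\rightarrowtail G_{n-1}\to\cdots\to G_0\twoheadrightarrow M$ with every $G_i\in\mathcal{C}\cap\mathcal{W}$. Since $\mathcal{C}\cap\mathcal{W}\subseteq\mathcal{C}$, this same sequence witnesses $M\in\mathcal{C}_n$. Splitting it into conflations and running down the successive syzygies, repeatedly using that $\mathcal{W}$ is closed under two-out-of-three (starting from $G_n,G_{n-1}\in\mathcal{W}$), one gets $M\in\mathcal{W}$, hence $M\in\mathcal{C}_n\cap\mathcal{W}$.

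For the reverse inclusion $\mathcal{C}_n\cap\mathcal{W}\subseteq(\mathcal{C}\cap\mathcal{W})_n$ I would induct on $n$. The case $n=0$ is immediate, since $\mathcal{C}_0=\mathcal{C}$ and $(\mathcal{C}\cap\mathcal{W})_0=\mathcal{C}\cap\mathcal{W}$. For $n\geq 1$, given $M\in\mathcal{C}_n\cap\mathcal{W}$, I would use completeness of the cotorsion pair $(\mathcal{C},\mathcal{W}\cap\mathcal{F})$ to produce a conflation $K\rightarrowtail G\twoheadrightarrow M$ with $G\in\mathcal{C}$ and $K\in\mathcal{W}\cap\mathcal{F}$. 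Applying Lemma \ref{06-2}(1) to this conflation, read relative to the complete hereditary cotorsion pair $(\mathcal{C},\mathcal{C}^{\perp})=(\mathcal{C},\mathcal{W}\cap\mathcal{F})$, gives $\mathcal{C}\text{-}\pd_{\mathcal{A}}(K)\leq\max\{0,n-1\}=n-1$, so $K\in\mathcal{C}_{n-1}$; as $K\in\mathcal{W}\cap\mathcal{F}\subseteq\mathcal{W}$, the induction hypothesis yields $K\in(\mathcal{C}\cap\mathcal{W})_{n-1}$. Moreover $M,K\in\mathcal{W}$ force $G\in\mathcal{W}$ by the thickness of $\mathcal{W}$, so $G\in\mathcal{C}\cap\mathcal{W}$. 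Finally I would apply Lemma \ref{06-2}(3) to the same conflation $K\rightarrowtail G\twoheadrightarrow M$, now read relative to the complete hereditary cotorsion pair $(\mathcal{C}\cap\mathcal{W},\mathcal{F})$, to obtain $(\mathcal{C}\cap\mathcal{W})\text{-}\pd_{\mathcal{A}}(M)\leq\max\{0,(n-1)+1\}=n$, i.e. $M\in(\mathcal{C}\cap\mathcal{W})_n$, completing the induction.

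There is no serious obstacle here: the argument only invokes completeness and heredity of the two cotorsion pairs comprising the Hovey triple together with the thickness of $\mathcal{W}$, and in particular the left-extendability hypothesis of Theorem \ref{SStype} is not needed for this statement. The only point requiring attention is bookkeeping — making sure each use of Lemma \ref{06-2} is interpreted relative to the correct cotorsion pair (namely $(\mathcal{C},\mathcal{W}\cap\mathcal{F})$ in the first application and $(\mathcal{C}\cap\mathcal{W},\mathcal{F})$ in the second) and that the degenerate conventions for $\mathcal{X}\text{-}\pd$ in degree $0$ are respected. One may also note that when $(\mathcal{C}\cap\mathcal{W},\mathcal{F})$ happens to be left extendable, the inclusion $\mathcal{C}_n\cap\mathcal{W}\subseteq(\mathcal{C}\cap\mathcal{W})_n$ can alternatively be deduced from the equivalence (1)$\Leftrightarrow$(4) of Theorem \ref{SStype} together with the thickness of $\mathcal{W}$ applied to the conflation furnished there.
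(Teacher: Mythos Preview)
Your proof is correct and follows essentially the same route as the paper: both arguments produce the conflation $K\rightarrowtail G\twoheadrightarrow M$ from completeness of $(\mathcal{C},\mathcal{W}\cap\mathcal{F})$, establish $K\in(\mathcal{C}\cap\mathcal{W})_{n-1}$ and $G\in\mathcal{C}\cap\mathcal{W}$, and finish with Lemma~\ref{06-2}(3). The only difference lies in how $K\in(\mathcal{C}\cap\mathcal{W})_{n-1}$ is obtained. The paper cites the proof of (1)$\Rightarrow$(2) in Theorem~\ref{SStype}, which in turn invokes the Auslander--Buchweitz-style implication (1)$\Rightarrow$(4) together with a splitting argument (using $K\in\mathcal{W}\cap\mathcal{F}$) to force $K$ into $(\mathcal{C}\cap\mathcal{W})_{n-1}$. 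You instead induct directly on the statement of the corollary, using $K\in\mathcal{C}_{n-1}\cap\mathcal{W}$ and the induction hypothesis. Your route is marginally more self-contained and makes it transparent that the left-extendability hypothesis of Theorem~\ref{SStype} plays no role here --- a point the paper's proof leaves implicit, since the portion of Theorem~\ref{SStype} it borrows (namely (1)$\Rightarrow$(4) and the first half of (1)$\Rightarrow$(2)) does not actually use that hypothesis either.
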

\begin{proof}
	Clearly, $(\mathcal{C}\cap \mathcal{W})_{n} \subseteq \mathcal{C}_{n}$.
	In addition, $(\mathcal{C}\cap \mathcal{W})_{n} \subseteq \mathcal{W}$ as $\mathcal{W}$ is thick.
	Thus, $(\mathcal{C}\cap \mathcal{W})_{n} \subseteq \mathcal{C}_{n} \cap \mathcal{W}$.
	Conversely, if $M \in \mathcal{C}_{n} \cap \mathcal{W}$,
	then by the proof of (1) $\Rightarrow$ (2) of Theorem \ref{SStype},
	there is a conflation
	$$ K \rightarrowtail G \twoheadrightarrow M $$
	with $G \in \mathcal{C}$ and
	$K \in (\mathcal{C}\cap \mathcal{W})_{n-1} \subseteq \mathcal{W}$.
	On the other hand, since $\mathcal{W}$ is thick,
	one can see that $G\in \mathcal{W}$.
	Now $M \in (\mathcal{C}\cap \mathcal{W})_{n}$ by Lemma \ref{06-2}(3).
	Therefore, $\mathcal{C}_{n} \cap \mathcal{W} = (\mathcal{C}\cap \mathcal{W})_{n}$.
\end{proof}

We are now in a position to prove our main result.

\begin{thm}\label{modle structure}
	Let $\mathcal{A}$ be a WIC exact category with enough projectives and injectives, and $\mathcal{M}=(\mathcal{C}, \mathcal{W}, \mathcal{F})$ a hereditary Hovey triple in $\mathcal{A}$.
	If the cotorsion pair $(\mathcal{C}\cap \mathcal{W}, \mathcal{F})$ is left extendable, then the following hold for any  integer $n\geq 0$.
	
	\emph{(1)} The cotorsion pair $(\mathcal{C}, \mathcal{W}\cap \mathcal{F})$ is also left extendable.
	More precisely,
	$$(\mathcal{C}_{n}, \mathcal{C}_{n}^{\perp})
	= (\mathcal{C}_{n}, \mathcal{W} \cap (\mathcal{C}\cap \mathcal{W})_{n}^{\perp})$$
	is a complete and hereditary cotorsion pair with kernel $(\mathcal{C}\cap \mathcal{W})_{n} \cap (\mathcal{C}\cap \mathcal{W})_{n}^{\perp}$.
	
	\emph{(2)} The triple $\mathcal{M}_{n} =(\mathcal{C}_{n}, \mathcal{W}, (\mathcal{C}\cap \mathcal{W})_{n} ^{\perp})$ is a hereditary Hovey triple in $\mathcal{A}$.
	Moreover, $\mathcal{C}_{n} \cap (\mathcal{C}\cap \mathcal{W})_{n} ^{\perp}$ is a Frobenius category with the class of projective-injective objects $(\mathcal{C}\cap \mathcal{W})_{n} \cap (\mathcal{C}\cap \mathcal{W})_{n} ^{\perp}$.
	Furthermore, there are triangle equivalences
	$$\underline{\mathcal{C}_{n} \cap (\mathcal{C}\cap \mathcal{W})_{n} ^{\perp}} \simeq \Ho(\mathcal{M}_{n})= \Ho(\mathcal{M})\simeq \underline{\mathcal{C}\cap \mathcal{F}}.$$
\end{thm}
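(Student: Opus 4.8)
The plan is to deduce the theorem from Theorem \ref{SStype} and Corollary \ref{SStype-0}, so that the only genuinely new ingredients are the identification of $\mathcal{C}_{n}^{\perp}$, the completeness of $(\mathcal{C}_{n},\mathcal{C}_{n}^{\perp})$, and the comparison of $\Ho(\mathcal{M}_{n})$ with $\Ho(\mathcal{M})$.

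For part (1), I would first establish $\mathcal{C}_{n}^{\perp}=\mathcal{W}\cap(\mathcal{C}\cap\mathcal{W})_{n}^{\perp}$ together with ${}^{\perp}\bigl(\mathcal{W}\cap(\mathcal{C}\cap\mathcal{W})_{n}^{\perp}\bigr)=\mathcal{C}_{n}$. The inclusion $\mathcal{C}_{n}^{\perp}\subseteq\mathcal{W}\cap(\mathcal{C}\cap\mathcal{W})_{n}^{\perp}$ is immediate from $\mathcal{C}\subseteq\mathcal{C}_{n}$ (whence $\mathcal{C}_{n}^{\perp}\subseteq\mathcal{C}^{\perp}=\mathcal{W}\cap\mathcal{F}$) and $(\mathcal{C}\cap\mathcal{W})_{n}\subseteq\mathcal{C}_{n}$; the reverse inclusion, and the description of the left orthogonal, are exactly the equivalences (1)$\Leftrightarrow$(2)$\Leftrightarrow$(3) of Theorem \ref{SStype}, feeding the conflation of condition (2) into the long exact $\Ext$-sequence and using $(\mathcal{C}\cap\mathcal{W})_{n}^{\perp}\subseteq\mathcal{F}$ to annihilate $\Ext^{1}_{\mathcal{A}}(G,Y)$ for such $Y$. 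This already makes $(\mathcal{C}_{n},\mathcal{C}_{n}^{\perp})=(\mathcal{C}_{n},\mathcal{W}\cap(\mathcal{C}\cap\mathcal{W})_{n}^{\perp})$ a cotorsion pair. Completeness I would prove constructively: for $M\in\mathcal{A}$, take a conflation $W_{0}\rightarrowtail C_{0}\twoheadrightarrow M$ with $C_{0}\in\mathcal{C}$ and $W_{0}\in\mathcal{W}\cap\mathcal{F}$ (completeness of $(\mathcal{C},\mathcal{W}\cap\mathcal{F})$), then a conflation $W_{0}\rightarrowtail Y\twoheadrightarrow X$ with $Y\in(\mathcal{C}\cap\mathcal{W})_{n}^{\perp}$ and $X\in(\mathcal{C}\cap\mathcal{W})_{n}$ (completeness of $((\mathcal{C}\cap\mathcal{W})_{n},(\mathcal{C}\cap\mathcal{W})_{n}^{\perp})$, available by left extendability), and push $W_{0}\rightarrowtail C_{0}$ out along $W_{0}\rightarrowtail Y$. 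The pushout object $E$ then sits in conflations $C_{0}\rightarrowtail E\twoheadrightarrow X$ and $Y\rightarrowtail E\twoheadrightarrow M$; Lemma \ref{06-2}(2) applied to the former gives $E\in\mathcal{C}_{n}$, while thickness of $\mathcal{W}$ (together with $(\mathcal{C}\cap\mathcal{W})_{n}\subseteq\mathcal{W}$) gives $Y\in\mathcal{W}$, hence $Y\in\mathcal{C}_{n}^{\perp}$, so $Y\rightarrowtail E\twoheadrightarrow M$ is the required left half of completeness; the right half is obtained dually (first a $\mathcal{W}\cap\mathcal{F}$-preenvelope of $M$, then a $(\mathcal{C}\cap\mathcal{W})_{n}^{\perp}$-preenvelope, composed), or from the Section~2 remark that over a category with enough projectives and injectives one half of completeness implies the other. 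Heredity of $(\mathcal{C}_{n},\mathcal{C}_{n}^{\perp})$ then follows from Lemma \ref{06-2}(1) and \cite[Lemma 6.17]{Sto201401}, and its kernel is $\mathcal{C}_{n}\cap\mathcal{W}\cap(\mathcal{C}\cap\mathcal{W})_{n}^{\perp}=(\mathcal{C}\cap\mathcal{W})_{n}\cap(\mathcal{C}\cap\mathcal{W})_{n}^{\perp}$ by Corollary \ref{SStype-0}. As $n$ is arbitrary, $(\mathcal{C},\mathcal{W}\cap\mathcal{F})$ is left extendable.

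For part (2), Corollary \ref{SStype-0} gives $\mathcal{C}_{n}\cap\mathcal{W}=(\mathcal{C}\cap\mathcal{W})_{n}$, so $(\mathcal{C}_{n}\cap\mathcal{W},(\mathcal{C}\cap\mathcal{W})_{n}^{\perp})$ is the complete hereditary cotorsion pair furnished by left extendability and Remark \ref{T-2-1}, while $(\mathcal{C}_{n},\mathcal{W}\cap(\mathcal{C}\cap\mathcal{W})_{n}^{\perp})=(\mathcal{C}_{n},\mathcal{C}_{n}^{\perp})$ is complete and hereditary by part (1); since $\mathcal{W}$ is thick, $\mathcal{M}_{n}$ is a hereditary Hovey triple. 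The Frobenius assertion and the equivalence $\Ho(\mathcal{M}_{n})\simeq\underline{\mathcal{C}_{n}\cap(\mathcal{C}\cap\mathcal{W})_{n}^{\perp}}$, with projective-injective class $\mathcal{C}_{n}\cap\mathcal{W}\cap(\mathcal{C}\cap\mathcal{W})_{n}^{\perp}=(\mathcal{C}\cap\mathcal{W})_{n}\cap(\mathcal{C}\cap\mathcal{W})_{n}^{\perp}$, follow from \cite[Theorem 6.21]{Sto201401} as recalled in Section~2, as does $\Ho(\mathcal{M})\simeq\underline{\mathcal{C}\cap\mathcal{F}}$. It remains to identify $\Ho(\mathcal{M}_{n})$ with $\Ho(\mathcal{M})$. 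For this I would use that $\mathcal{M}$ and $\mathcal{M}_{n}$ have the same thick class $\mathcal{W}$ and that the weak equivalences of an exact model structure are exactly the morphisms factoring as an inflation with cokernel in $\mathcal{W}$ followed by a deflation with kernel in $\mathcal{W}$ --- a description involving only $\mathcal{W}$; hence $\mathcal{M}$ and $\mathcal{M}_{n}$ have the same weak equivalences and the same Gabriel--Zisman localization, $\Ho(\mathcal{M}_{n})=\Ho(\mathcal{M})$, compatibly with the triangulations since both are induced by the exact structure of $\mathcal{A}$. Chaining the displayed equivalences completes the proof.

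The step I expect to be the main obstacle is this last one: although intuitively transparent, making $\Ho(\mathcal{M}_{n})=\Ho(\mathcal{M})$ precise requires importing the (standard but external) fact that the weak equivalences of an exact model structure are detected by its thick class alone and that the triangulated structures on the homotopy categories coincide (see, e.g., \cite{Gil2016,Bec2014}). The completeness construction in part (1) is the other point needing care, but it is a routine pushout computation once the cotorsion-pair identities of Theorem \ref{SStype} and Corollary \ref{SStype-0} are in hand.
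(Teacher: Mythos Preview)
Your proposal is correct and follows essentially the same route as the paper: the identification $\mathcal{C}_{n}^{\perp}=\mathcal{W}\cap(\mathcal{C}\cap\mathcal{W})_{n}^{\perp}$ via Theorem~\ref{SStype}, the pushout construction for completeness (your $W_{0}\rightarrowtail C_{0}\twoheadrightarrow M$ followed by $W_{0}\rightarrowtail Y\twoheadrightarrow X$ is exactly the paper's $N\rightarrowtail H\twoheadrightarrow M$ followed by $N\rightarrowtail L_{1}\twoheadrightarrow B$), the kernel computation via Corollary~\ref{SStype-0}, and the assembly of $\mathcal{M}_{n}$ all match. The step you flag as the main obstacle, $\Ho(\mathcal{M}_{n})=\Ho(\mathcal{M})$, the paper handles simply by citing \cite[Corollary 1.2]{GZ2024}, whose content is precisely your ``same $\mathcal{W}$ gives same weak equivalences'' argument; so your worry there is unfounded.
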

\begin{proof}
	The case $n=0$ is obvious.
	Now assume $n> 0$.
	
	(1) By Theorem \ref{SStype},
	we have $\mathcal{C}_n = {}^{\perp}(\mathcal{W} \cap (\mathcal{C}\cap \mathcal{W})_{n} ^{\perp})$
	and $\mathcal{W} \cap (\mathcal{C}\cap \mathcal{W})_{n} ^{\perp} \subseteq \mathcal{C}_n^{\bot}$.
	In addition,
	$$\mathcal{C}_n^{\perp}  \subseteq ((\mathcal{C}\cap \mathcal{W})_{n} \cup \mathcal{C})^{\perp} = (\mathcal{C}\cap \mathcal{W})_{n} ^{\perp} \cap \mathcal{C} ^{\perp} = (\mathcal{C}\cap \mathcal{W})_{n} ^{\perp} \cap \mathcal{W} \cap \mathcal{F}  \subseteq \mathcal{W} \cap (\mathcal{C}\cap \mathcal{W})_{n} ^{\perp}.$$
	Thus, $(\mathcal{C}_n, \mathcal{W}\cap (\mathcal{C}\cap \mathcal{W})_{n} ^{\perp})$ is a cotorsion pair.
	By Corollary \ref{SStype-0}, $\mathcal{W} \cap \mathcal{C}_{n} = (\mathcal{C} \cap \mathcal{W})_{n}$,
	then the kernel of $(\mathcal{C}_{n}, \mathcal{W} \cap (\mathcal{C}\cap \mathcal{W})_{n}^{\perp})$ is $(\mathcal{C} \cap \mathcal{W})_{n} \cap (\mathcal{C} \cap \mathcal{W})_{n} ^{\perp}$.
	
	Next, we show that the cotorsion pair
	$(\mathcal{C}_n, \mathcal{W}\cap (\mathcal{C}\cap \mathcal{W})_{n} ^{\perp})$
	is complete.
	It suffices to construct a conflation
	$$(\dag)\quad L_1 \rightarrowtail G_1 \twoheadrightarrow M $$
	with $G_1 \in \mathcal{C}_n$ and $L_1 \in \mathcal{W}\cap (\mathcal{C}\cap \mathcal{W})_{n} ^{\perp}$,
	for any $M \in \mathcal{A}$.
	To this end, by the completeness of the cotorsion pair $(\mathcal{C}, \mathcal{W}\cap\mathcal{F})$,
	we can take a conflation
	$$ N \rightarrowtail H \twoheadrightarrow M $$
	with $H \in \mathcal{C}$ and $N \in \mathcal{W}\cap \mathcal{F}$.
	Since the cotorsion pair $(\mathcal{C} \cap \mathcal{W}, \mathcal{F})$
	is left extendable,
	we have a complete cotorsion pair
	$((\mathcal{C} \cap \mathcal{W})_{n}, (\mathcal{C} \cap \mathcal{W})_{n}^{\perp})$,
	from which one can obtain a conflation
	$$ N \rightarrowtail L_1 \twoheadrightarrow B $$
	with $L_1 \in (\mathcal{C} \cap \mathcal{W})_{n} ^{\perp}$
	and $B \in (\mathcal{C} \cap \mathcal{W})_{n}$.
	Consider the following pushout diagram
	\begin{displaymath}
		\xymatrix{
			N \ar@{>->}[r] \ar@{>->}[d] & H \ar@{->>}[r] \ar@{>-->}[d] & M  \ar@{=}[d] \\
			L_1 \ar@{>-->}[r] \ar@{->>}[d] & G_1 \ar@{->>}[r] \ar@{->>}[d] & M  \\
			B \ar@{=}[r]  & B  &
		}
	\end{displaymath}
	Since $H \in \mathcal{C}$ and $B \in (\mathcal{C} \cap \mathcal{W})_{n} \subseteq \mathcal{C}_{n}$,
	one can see that $G_1 \in \mathcal{C}_{n}$ by Lemma \ref{06-2}(2).
	On the other hand,
	as we have seen that $N \in \mathcal{W}\cap \mathcal{F} \subseteq \mathcal{W}$
	and $B \in (\mathcal{C} \cap \mathcal{W})_{n}
	\overset{\mathrm{Corollary\ \ref{SStype-0}}}{=\!\!\!=\!\!\!=\!\!\!=\!\!\!=\!\!\!=\!\!\!=\!\!\!=\!\!\!=\!\!\!=\!\!\!=\!\!\!=\!\!\!=\!\!\!=}
	\mathcal{W}\cap \mathcal{C}_{n} \subseteq \mathcal{W}$,
	it follows that $L_1\in \mathcal{W}$ since $\mathcal{W}$ is thick.
	Thus the conflation $L_1 \rightarrowtail G_1 \twoheadrightarrow M$
	in the above diagram is as desired.
	
	Finally, by Remark \ref{T-2-1},
	$(\mathcal{C}_n, \mathcal{W}\cap (\mathcal{C}\cap \mathcal{W})_{n} ^{\perp})$ is also hereditary.
	
	(2) Note that $\mathcal{W}$ is thick and the complete cotorsion pair $((\mathcal{C} \cap \mathcal{W})_{n}, (\mathcal{C} \cap \mathcal{W})_{n}^{\perp})$
	is hereditary by Remark \ref{T-2-1}.
	Combining this cotorsion pair with $(\mathcal{C}_{n}, \mathcal{W} \cap (\mathcal{C}\cap \mathcal{W})_{n}^{\perp})$
	in (1), we obtain a hereditary Hovey triple
	$\mathcal{M}_{n} = (\mathcal{C}_{n}, \mathcal{W}, (\mathcal{C} \cap \mathcal{W})_{n} ^{\perp})$.
	
	The remaining assertions hold by \cite[Theorem 6.21]{Sto201401} and \cite[Corollary 1.2]{GZ2024}.
\end{proof}

The next three results are dual to Theorem \ref{SStype}, Corollary \ref{SStype-0} and Theorem \ref{modle structure}, respectively.

\begin{thm}\label{SStype_{1}}
	Let $\mathcal{A}$ be a WIC exact category with enough projectives and injectives,
	and $(\mathcal{C}, \mathcal{W}, \mathcal{F})$ a hereditary Hovey triple in $\mathcal{A}$.
	If the cotorsion pair $(\mathcal{C}, \mathcal{W}\cap \mathcal{F})$ is right extendable, then the following assertions are equivalent for any $M \in \mathcal{A}$ and any integer $n\geq 0$.
	
	\emph{(1)} $M \in \mathcal{F}_{n}$.
	
	\emph{(2)} There is a conflation $M \rightarrowtail G \twoheadrightarrow C$ in $\mathcal{A}$
	with $G \in \mathcal{F}$
	and $C \in (\mathcal{W}\cap \mathcal{F})_{n-1}$
	such that
	$$0 \to \Hom_\mathcal{A}(X, M) \to \Hom_\mathcal{A}(X, G) \to \Hom_\mathcal{A}(X, C) \to  0$$
	is exact for all $X \in {}^\perp(\mathcal{W}\cap \mathcal{F})_{n}$
	\emph{(}If $n=0$, then $(\mathcal{W}\cap \mathcal{F})_{n-1}$
	should be interpreted as $\{0\}$\emph{)}.
	
	\emph{(3)} $\Ext_\mathcal{A}^{1}(H, M) = 0$ for every $H \in \mathcal{W} \cap {}^\perp(\mathcal{W}\cap \mathcal{F})_{n}$.
	
	\emph{(4)} There is a conflation $ N \rightarrowtail L \twoheadrightarrow M $ in $\mathcal{A}$
	with $L \in (\mathcal{W}\cap \mathcal{F})_{n}$ and $N \in \mathcal{F}$.
\end{thm}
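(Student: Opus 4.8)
The plan is to obtain Theorem \ref{SStype_{1}} from Theorem \ref{SStype} by passing to the opposite category $\mathcal{A}^{\mathrm{op}}$, rather than redoing the (otherwise verbatim-reversed) Auslander--Buchweitz-style argument. First I would record the self-duality of the ambient hypotheses: $\mathcal{A}^{\mathrm{op}}$ is again a WIC exact category with enough projectives and injectives, its conflations are those of $\mathcal{A}$ read backwards (so inflations and deflations, as well as projectives and injectives, are interchanged), $\Ext^1_{\mathcal{A}^{\mathrm{op}}}(A,B)=\Ext^1_{\mathcal{A}}(B,A)$, and $\Hom_{\mathcal{A}^{\mathrm{op}}}(A,B)=\Hom_{\mathcal{A}}(B,A)$. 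Consequently, if $(\mathcal{C},\mathcal{W},\mathcal{F})$ is a hereditary Hovey triple in $\mathcal{A}$ with cotorsion pairs $(\mathcal{C}\cap\mathcal{W},\mathcal{F})$ and $(\mathcal{C},\mathcal{W}\cap\mathcal{F})$, then $(\mathcal{F},\mathcal{W},\mathcal{C})$ is a hereditary Hovey triple in $\mathcal{A}^{\mathrm{op}}$, whose two cotorsion pairs (in the sense of $\mathcal{A}^{\mathrm{op}}$) are $(\mathcal{F}\cap\mathcal{W},\mathcal{C})$ and $(\mathcal{F},\mathcal{W}\cap\mathcal{C})$ --- these being exactly $(\mathcal{C},\mathcal{W}\cap\mathcal{F})$ and $(\mathcal{C}\cap\mathcal{W},\mathcal{F})$ of $\mathcal{A}$ with the two classes in each pair interchanged.

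Next I would set up the translation dictionary for dimensions and orthogonals. For a cotorsion pair $(\mathcal{X},\mathcal{Y})$ in $\mathcal{A}$, a projective resolution by objects of $\mathcal{X}$ becomes an injective coresolution in $\mathcal{A}^{\mathrm{op}}$; hence the $\mathcal{Y}$-injective dimension of $M$ in $\mathcal{A}$ equals the $\mathcal{Y}$-projective dimension of $M$ in $\mathcal{A}^{\mathrm{op}}$ computed for the flipped pair $(\mathcal{Y},\mathcal{X})$. In particular the class $\mathcal{F}_n$ of Theorem \ref{SStype_{1}} (objects of $\mathcal{F}$-injective dimension $\le n$ for $(\mathcal{C}\cap\mathcal{W},\mathcal{F})$ in $\mathcal{A}$) is the class ``$\mathcal{C}_n$'' of Theorem \ref{SStype} computed in $\mathcal{A}^{\mathrm{op}}$ for the triple $(\mathcal{F},\mathcal{W},\mathcal{C})$, and $(\mathcal{W}\cap\mathcal{F})_n$ in $\mathcal{A}$ becomes ``$(\mathcal{C}\cap\mathcal{W})_n$'' there. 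Since $\Ext$ reverses, the operators ${}^{\perp}(-)$ and $(-)^{\perp}$ swap, so the class $(\mathcal{C}\cap\mathcal{W})_n^{\perp}$ appearing in Theorem \ref{SStype}, read in $\mathcal{A}^{\mathrm{op}}$, is ${}^{\perp}(\mathcal{W}\cap\mathcal{F})_n$ in $\mathcal{A}$. Under this dictionary the hypothesis of Theorem \ref{SStype_{1}}, namely that $(\mathcal{C},\mathcal{W}\cap\mathcal{F})$ is right extendable in $\mathcal{A}$ (i.e.\ $({}^{\perp}(\mathcal{W}\cap\mathcal{F})_n,(\mathcal{W}\cap\mathcal{F})_n)$ is a complete cotorsion pair for every $n\ge 0$), translates into: $(\mathcal{F}\cap\mathcal{W},\mathcal{C})$ is left extendable in $\mathcal{A}^{\mathrm{op}}$ --- precisely the hypothesis needed to apply Theorem \ref{SStype} to $(\mathcal{F},\mathcal{W},\mathcal{C})$ in $\mathcal{A}^{\mathrm{op}}$.

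It then remains to apply Theorem \ref{SStype} in $\mathcal{A}^{\mathrm{op}}$ and transport its conclusion back to $\mathcal{A}$, reversing every conflation: condition (1) there becomes ``$M\in\mathcal{F}_n$''; the conflation $M\rightarrowtail L\twoheadrightarrow N$ of its condition (4), read in $\mathcal{A}$, becomes $N\rightarrowtail L\twoheadrightarrow M$ with $L\in(\mathcal{W}\cap\mathcal{F})_n$ and $N\in\mathcal{F}$; its condition (3) becomes $\Ext^1_{\mathcal{A}}(H,M)=0$ for all $H\in\mathcal{W}\cap{}^{\perp}(\mathcal{W}\cap\mathcal{F})_n$; and the conflation $K\rightarrowtail G\twoheadrightarrow M$ together with the exact $\Hom$-sequence of its condition (2) become the conflation $M\rightarrowtail G\twoheadrightarrow C$ with $G\in\mathcal{F}$, $C\in(\mathcal{W}\cap\mathcal{F})_{n-1}$, and the sequence $0\to\Hom_{\mathcal{A}}(X,M)\to\Hom_{\mathcal{A}}(X,G)\to\Hom_{\mathcal{A}}(X,C)\to 0$ exact for all $X\in{}^{\perp}(\mathcal{W}\cap\mathcal{F})_n$, matching the four stated conditions verbatim (including the convention that $(\mathcal{W}\cap\mathcal{F})_{n-1}=\{0\}$ when $n=0$). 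The argument is entirely formal; the only place an error could enter is the bookkeeping, so I would check once, carefully, that each orthogonal class, each dimension function, and the direction of each conflation has been translated on the correct side --- in particular that ``right extendable'' in $\mathcal{A}$ genuinely matches the ``left extendable'' hypothesis of Theorem \ref{SStype} after the flip --- after which the statement follows immediately.
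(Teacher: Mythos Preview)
Your proposal is correct and is essentially the same approach as the paper: the paper simply records that Theorem \ref{SStype_{1}} is the dual of Theorem \ref{SStype} and gives no further argument, while you have made that duality precise by passing to $\mathcal{A}^{\mathrm{op}}$ and carefully checking the dictionary (Hovey triple $(\mathcal{C},\mathcal{W},\mathcal{F})\leftrightarrow(\mathcal{F},\mathcal{W},\mathcal{C})$, right extendable $\leftrightarrow$ left extendable, $\mathcal{F}_n\leftrightarrow\mathcal{C}_n$, ${}^{\perp}(-)\leftrightarrow(-)^{\perp}$). Your bookkeeping is accurate, so the statement follows immediately from Theorem \ref{SStype} as you claim.
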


\begin{cor}\label{SStype-0-1}
	Let $\mathcal{A}$ be a WIC exact category with enough projectives and injectives,
	and $(\mathcal{C}, \mathcal{W}, \mathcal{F})$ a hereditary Hovey triple in $\mathcal{A}$.
	Then $\mathcal{W} \cap \mathcal{F}_{n} = (\mathcal{W}\cap \mathcal{F})_{n}$ for any non-negative integer $n$.		
\end{cor}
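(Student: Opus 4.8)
The plan is to dualize Corollary \ref{SStype-0}. Passing to the opposite exact category $\mathcal{A}^{op}$ --- which is again WIC with enough projectives and injectives --- sends the hereditary Hovey triple $(\mathcal{C},\mathcal{W},\mathcal{F})$ to $(\mathcal{F},\mathcal{W},\mathcal{C})$, interchanges projective and injective dimensions, and turns the desired equality $\mathcal{W}\cap\mathcal{F}_{n}=(\mathcal{W}\cap\mathcal{F})_{n}$ into the equality $\mathcal{C}_{n}\cap\mathcal{W}=(\mathcal{C}\cap\mathcal{W})_{n}$ supplied by Corollary \ref{SStype-0}. For the record I would still write out the mirror argument explicitly, in the same three moves used there.

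First, the inclusion $(\mathcal{W}\cap\mathcal{F})_{n}\subseteq\mathcal{W}\cap\mathcal{F}_{n}$. The containment in $\mathcal{F}_{n}$ is immediate from $\mathcal{W}\cap\mathcal{F}\subseteq\mathcal{F}$. For the containment in $\mathcal{W}$, given an object $M$ with an acyclic sequence $M\rightarrowtail L^{0}\to\cdots\to L^{n-1}\twoheadrightarrow L^{n}$ whose terms all lie in $\mathcal{W}\cap\mathcal{F}\subseteq\mathcal{W}$, break it into conflations and use the thickness of $\mathcal{W}$ along the tower of cosyzygies: the bottom conflation exhibits the last cosyzygy as the kernel term of a conflation with its other two terms in $\mathcal{W}$, hence it lies in $\mathcal{W}$; each conflation above it then has its outer two terms in $\mathcal{W}$, forcing the next cosyzygy, and eventually $M$ itself, into $\mathcal{W}$.

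Second, for the reverse inclusion I would take $M\in\mathcal{W}\cap\mathcal{F}_{n}$ and induct on $n$, the case $n=0$ being the triviality $\mathcal{W}\cap\mathcal{F}_{0}=\mathcal{W}\cap\mathcal{F}=(\mathcal{W}\cap\mathcal{F})_{0}$. For $n\geq1$, completeness of the cotorsion pair $(\mathcal{C},\mathcal{W}\cap\mathcal{F})$ yields a conflation $M\rightarrowtail G\twoheadrightarrow C$ with $G\in\mathcal{W}\cap\mathcal{F}$ and $C\in\mathcal{C}$ (this uses only completeness, not right extendability; alternatively one could quote the first, Auslander-Buchweitz, part of the implication (1)$\Rightarrow$(2) of Theorem \ref{SStype_{1}} to get directly a conflation $M\rightarrowtail G\twoheadrightarrow C$ with $G\in\mathcal{F}$ and $C\in(\mathcal{W}\cap\mathcal{F})_{n-1}$, bypassing the induction). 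Since $M,G\in\mathcal{W}$ and $\mathcal{W}$ is thick, $C\in\mathcal{W}$; and the injective-dimension analogue of Lemma \ref{06-2}(1), applied to $M\rightarrowtail G\twoheadrightarrow C$ with $\mathcal{F}\text{-}\id_{\mathcal{A}}(G)=0$ and $\mathcal{F}\text{-}\id_{\mathcal{A}}(M)\leq n$, gives $\mathcal{F}\text{-}\id_{\mathcal{A}}(C)\leq n-1$. Thus $C\in\mathcal{W}\cap\mathcal{F}_{n-1}=(\mathcal{W}\cap\mathcal{F})_{n-1}$ by the induction hypothesis, and the injective-dimension analogue of Lemma \ref{06-2}(3), applied to the same conflation with $(\mathcal{W}\cap\mathcal{F})\text{-}\id_{\mathcal{A}}(G)=0$ and $(\mathcal{W}\cap\mathcal{F})\text{-}\id_{\mathcal{A}}(C)\leq n-1$, yields $(\mathcal{W}\cap\mathcal{F})\text{-}\id_{\mathcal{A}}(M)\leq n$, i.e. $M\in(\mathcal{W}\cap\mathcal{F})_{n}$. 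Together with the first inclusion this gives the claimed equality.

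I do not anticipate any real obstacle; the only care needed is clerical --- quoting the correct dual clauses of Lemma \ref{06-2} with the right shift of index, and checking, exactly as in Corollary \ref{SStype-0} where left extendability was never invoked, that no use of right extendability of $(\mathcal{C},\mathcal{W}\cap\mathcal{F})$ creeps in, since it is not among the hypotheses. Accordingly I would keep the write-up brief and refer to the proof of Corollary \ref{SStype-0} for the mirror-image details.
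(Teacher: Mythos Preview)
Your proposal is correct and matches the paper's approach: the paper simply records that Corollary \ref{SStype-0-1} is dual to Corollary \ref{SStype-0} and omits the proof, which is precisely the duality you spell out in your first paragraph. Your explicit inductive argument is a faithful unwinding of that dual (and the alternative you note---quoting the Auslander--Buchweitz part of (1)$\Rightarrow$(2) in Theorem \ref{SStype_{1}}---is exactly the dual of what Corollary \ref{SStype-0} invokes from Theorem \ref{SStype}); you are also right that no extendability hypothesis is needed here.
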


\begin{thm}\label{modle structure 1}
	Let $\mathcal{A}$ be a WIC exact category with enough projectives and injectives, and $\mathcal{M}=(\mathcal{C}, \mathcal{W}, \mathcal{F})$ a hereditary Hovey triple in $\mathcal{A}$.
	If the cotorsion pair $(\mathcal{C}, \mathcal{W}\cap \mathcal{F})$ is right extendable, then the following hold for any integer $n\geq0$.
	
	\emph{(1)} The cotorsion pair $(\mathcal{C}\cap \mathcal{W}, \mathcal{F})$ is also right extendable.
	More precisely,
	$$({}^{\perp}\mathcal{F}_{n}, \mathcal{F}_{n})
	= (^{\perp}(\mathcal{W}\cap\mathcal{F})_{n} \cap \mathcal{W}, \mathcal{F}_{n})$$
	is a complete and hereditary cotorsion pair with kernel
	$^{\perp}(\mathcal{W}\cap\mathcal{F}) _{n} \cap (\mathcal{W}\cap\mathcal{F})_{n}$.
	
	\emph{(2)} The triple $\mathcal{M}_n =(^{\perp}(\mathcal{W}\cap\mathcal{F}) _{n}, \mathcal{W}, \mathcal{F}_{n})$ is a hereditary Hovey triple in $\mathcal{A}$.
	Moreover, $^{\perp}(\mathcal{W}\cap\mathcal{F})_{n} \cap \mathcal{F}_{n}$
	is a Frobenius category with the class of projective-injective objects
	$^{\perp}(\mathcal{W}\cap \mathcal{F})_{n} \cap (\mathcal{W}\cap \mathcal{F})_{n}$.
	Furthermore, there are triangle equivalences
	$$\underline{^{\perp}(\mathcal{W}\cap \mathcal{F})_{n} \cap \mathcal{F}_{n}} \simeq \Ho(\mathcal{M}_{n})= \Ho(\mathcal{M})\simeq \underline{\mathcal{C}\cap \mathcal{F}}.$$
\end{thm}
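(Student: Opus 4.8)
The plan is to deduce this result from the already-proven Theorem \ref{modle structure} by passing to the opposite category. Recall that $\mathcal{A}^{\mathrm{op}}$, equipped with the conflations of $\mathcal{A}$ read backwards, is again a WIC exact category, and it has enough projectives and injectives because the projectives and injectives of $\mathcal{A}$ exchange roles. Moreover $\Ext^{1}_{\mathcal{A}}(X,Y)\cong\Ext^{1}_{\mathcal{A}^{\mathrm{op}}}(Y,X)$, so $(\mathcal{X},\mathcal{Y})$ is a complete (resp. hereditary) cotorsion pair in $\mathcal{A}$ if and only if $(\mathcal{Y},\mathcal{X})$ is one in $\mathcal{A}^{\mathrm{op}}$, with the two orthogonality operators interchanged, and thickness of $\mathcal{W}$ is self-dual. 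Hence $(\mathcal{F},\mathcal{W},\mathcal{C})$ is a hereditary Hovey triple in $\mathcal{A}^{\mathrm{op}}$ whenever $(\mathcal{C},\mathcal{W},\mathcal{F})$ is one in $\mathcal{A}$.

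Next I would set up the dictionary between the dimension classes. For the cotorsion pair $(\mathcal{C},\mathcal{W}\cap\mathcal{F})$ in $\mathcal{A}$, the $(\mathcal{W}\cap\mathcal{F})$-injective dimension of an object computed in $\mathcal{A}$ agrees, directly from the definitions, with its $(\mathcal{W}\cap\mathcal{F})$-projective dimension computed in $\mathcal{A}^{\mathrm{op}}$ relative to the cotorsion pair $(\mathcal{W}\cap\mathcal{F},\mathcal{C})$; likewise the $\mathcal{F}$-injective dimension in $\mathcal{A}$ is the $\mathcal{F}$-projective dimension in $\mathcal{A}^{\mathrm{op}}$ relative to $(\mathcal{F},\mathcal{W}\cap\mathcal{C})$. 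Consequently the classes $(\mathcal{W}\cap\mathcal{F})_{n}$, $\mathcal{F}_{n}$, $^{\perp}(\mathcal{W}\cap\mathcal{F})_{n}$ and $^{\perp}\mathcal{F}_{n}$ of $\mathcal{A}$ coincide with the classes $(\mathcal{F}\cap\mathcal{W})_{n}$, $\mathcal{F}_{n}$, $(\mathcal{F}\cap\mathcal{W})_{n}^{\perp}$ and $\mathcal{F}_{n}^{\perp}$ of $\mathcal{A}^{\mathrm{op}}$, all taken relative to the triple $(\mathcal{F},\mathcal{W},\mathcal{C})$. In particular ``$(\mathcal{C},\mathcal{W}\cap\mathcal{F})$ is right extendable in $\mathcal{A}$'' translates exactly into ``$(\mathcal{F}\cap\mathcal{W},\mathcal{C})$ is left extendable in $\mathcal{A}^{\mathrm{op}}$'', which is precisely the hypothesis of Theorem \ref{modle structure} applied to the hereditary Hovey triple $(\mathcal{F},\mathcal{W},\mathcal{C})$ in $\mathcal{A}^{\mathrm{op}}$.

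Then I would read off the two conclusions and translate them back. Part (1) of Theorem \ref{modle structure}, applied to $(\mathcal{F},\mathcal{W},\mathcal{C})$ in $\mathcal{A}^{\mathrm{op}}$, says that $(\mathcal{F}_{n},\,\mathcal{W}\cap(\mathcal{F}\cap\mathcal{W})_{n}^{\perp})$ is a complete and hereditary cotorsion pair of $\mathcal{A}^{\mathrm{op}}$ with kernel $(\mathcal{F}\cap\mathcal{W})_{n}\cap(\mathcal{F}\cap\mathcal{W})_{n}^{\perp}$, and that $(\mathcal{F},\mathcal{W}\cap\mathcal{C})$ is left extendable there; flipping this cotorsion pair back to $\mathcal{A}$ via the dictionary yields exactly statement (1), namely that $(^{\perp}\mathcal{F}_{n},\mathcal{F}_{n})=(^{\perp}(\mathcal{W}\cap\mathcal{F})_{n}\cap\mathcal{W},\,\mathcal{F}_{n})$ is complete and hereditary with the stated kernel and that $(\mathcal{C}\cap\mathcal{W},\mathcal{F})$ is right extendable. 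Similarly, part (2) produces the hereditary Hovey triple $(\mathcal{F}_{n},\mathcal{W},(\mathcal{F}\cap\mathcal{W})_{n}^{\perp})$ of $\mathcal{A}^{\mathrm{op}}$, which is the triple $\mathcal{M}_{n}=(^{\perp}(\mathcal{W}\cap\mathcal{F})_{n},\mathcal{W},\mathcal{F}_{n})$ when read in $\mathcal{A}$, identifies $\mathcal{F}_{n}\cap(\mathcal{F}\cap\mathcal{W})_{n}^{\perp}$ (that is, $^{\perp}(\mathcal{W}\cap\mathcal{F})_{n}\cap\mathcal{F}_{n}$) as a Frobenius category with projective-injectives $(\mathcal{F}\cap\mathcal{W})_{n}\cap(\mathcal{F}\cap\mathcal{W})_{n}^{\perp}$, and gives the chain $\underline{\mathcal{F}_{n}\cap(\mathcal{F}\cap\mathcal{W})_{n}^{\perp}}\simeq\Ho(\mathcal{M}_{n})=\Ho(\mathcal{M})\simeq\underline{\mathcal{F}\cap\mathcal{C}}$ in $\mathcal{A}^{\mathrm{op}}$. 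Since a Frobenius structure, its stable category, and the homotopy category of an exact model structure all commute with passing to the opposite category (the stable and homotopy categories becoming their opposites, which are again triangulated, and opposites of triangle equivalences being triangle equivalences), taking opposites throughout this chain gives the asserted chain of triangle equivalences in $\mathcal{A}$.

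There is no new mathematical content here beyond Theorem \ref{modle structure}; the only thing that will require care is the bookkeeping in the dictionary above --- checking that ``left'' and ``right'' extendable interchange under $\mathcal{A}\leftrightarrow\mathcal{A}^{\mathrm{op}}$, that the four dimension and orthogonality classes match up as claimed, and that being WIC, having enough projectives and injectives, heredity, and thickness are all self-dual. The same opposite-category principle simultaneously yields Theorem \ref{SStype_{1}} and Corollary \ref{SStype-0-1} from Theorem \ref{SStype} and Corollary \ref{SStype-0}.
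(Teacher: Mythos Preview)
Your proposal is correct and is precisely the duality argument the paper has in mind: the paper simply states that Theorem \ref{modle structure 1} (together with Theorem \ref{SStype_{1}} and Corollary \ref{SStype-0-1}) is dual to Theorem \ref{modle structure} (together with Theorem \ref{SStype} and Corollary \ref{SStype-0}), and your opposite-category dictionary is exactly how one makes this rigorous. The bookkeeping you outline (interchange of left/right extendable, of projective/injective dimensions, of the two orthogonals, and self-duality of WIC, heredity, thickness, Frobenius structures, and homotopy categories) is all standard and goes through without difficulty.
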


\begin{rem}
\emph{After an earlier version of this paper was finished,
we note that El Maaouy also obtained the triple $\mathcal{M}_n =(^{\perp}(\mathcal{W}\cap\mathcal{F}) _{n}, \mathcal{W}, \mathcal{F}_{n})$
independently under deferent assumptions.
See \cite[Theorem A]{EL25} for more details.}
\end{rem}

We have been unable to obtain an equivalent condition
for a cotorsion pair $(\mathcal{X}, \mathcal{Y})$
to be left (resp., right) extendable.
In the remaining part of this section,
we will give a sufficient condition under which a cotorsion pair
$(\mathcal{X}, \mathcal{Y})$ is right extendable.

Let $\lambda$ be a regular cardinal.
Recall that a poset $I$ is said to be $\lambda$-\emph{directed}
if each subset of $I$ of cardinality smaller than $\lambda$ has an upper bound.
Let $\mathcal{G}$ be an additive category with arbitrary $\lambda$-direct limits
(i.e., colimits of all diagrams whose shapes are $\lambda$-directed posets).
Following \cite{AR1994}, an object $F \in \mathcal{G}$
is called $\lambda$-\emph{presentable} if for each $\lambda$-direct system $(Y_i~|~ i \in I)$ in $\mathcal{G}$, the
canonical map
$$\lim\limits_{\longrightarrow} \Hom_{\mathcal{G}}(F, Y_i) \longrightarrow \Hom_{\mathcal{G}}(F, \lim\limits_{\longrightarrow} Y_i)$$
is an isomorphism.
The category $\mathcal{G}$ is called $\lambda$-\emph{accessible}
if there is a set $\mathcal{S}$ of $\lambda$-presentable objects
such that every object of $\mathcal{G}$ is a $\lambda$-direct limit
of objects from $\mathcal{S}$.
An additive category is called \emph{accessible}
if it is $\lambda$-accessible for some regular cardinal $\lambda$.	
Moreover, by a \emph{locally presentable} category we mean a cocomplete accessible category.

\begin{lem}\label{T-1}
	Let $\mathcal{A}$ be a locally presentable exact category
	with cokernels and enough projectives and injectives,
	and $(\mathcal{X}, \mathcal{Y})$ a cotorsion pair in $\mathcal{A}$.
	If $(\mathcal{X}, \mathcal{Y})$ is generated by a set of objects in $\mathcal{A}$, then it is complete.
\end{lem}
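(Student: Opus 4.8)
The strategy is to run an Eklof--Trlifaj style small object argument producing special $\mathcal{Y}$-preenvelopes, and then to invoke Salce's lemma. Since $\mathcal{A}$ has enough projectives and injectives, by the argument recorded after the definition of a complete cotorsion pair in Section~2 it suffices to show that every $M\in\mathcal{A}$ fits into a conflation $M\rightarrowtail Y\twoheadrightarrow X$ with $Y\in\mathcal{Y}$ and $X\in\mathcal{X}$. Let $\mathcal{S}$ be the generating set, so that $\mathcal{S}^{\perp}=\mathcal{Y}$ and hence $\mathcal{S}\subseteq{}^{\perp}(\mathcal{S}^{\perp})={}^{\perp}\mathcal{Y}=\mathcal{X}$. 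As $\mathcal{A}$ is accessible with enough projectives and $\mathcal{S}$ is a \emph{set}, I would fix a regular cardinal $\lambda$ so large that $\mathcal{A}$ is $\lambda$-accessible and, for each $S\in\mathcal{S}$, there is a deflation $P_{S}\twoheadrightarrow S$ from a projective $P_{S}$ such that $P_{S}$ and $\ker(P_{S}\twoheadrightarrow S)=:K_{S}$ are both $\lambda$-presentable. From the long exact sequence of $K_{S}\rightarrowtail P_{S}\twoheadrightarrow S$ one gets $\Ext_{\mathcal{A}}^{1}(S,-)\cong\operatorname{coker}\bigl(\Hom_{\mathcal{A}}(P_{S},-)\to\Hom_{\mathcal{A}}(K_{S},-)\bigr)$, so each functor $\Ext_{\mathcal{A}}^{1}(S,-)$ commutes with $\lambda$-directed colimits.

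Given $M\in\mathcal{A}$, I would build by transfinite recursion a $\lambda$-continuous chain of inflations $M=N_{0}\rightarrowtail N_{1}\rightarrowtail\cdots\rightarrowtail N_{\alpha}\rightarrowtail\cdots$ for $\alpha<\lambda$. At a successor step, the set $T_{\alpha}=\bigsqcup_{S\in\mathcal{S}}\Ext_{\mathcal{A}}^{1}(S,N_{\alpha})$ (a set, since $\Hom$ and hence $\Ext^{1}$ take values in sets) enumerates representatives of all conflations $N_{\alpha}\rightarrowtail E_{t}\twoheadrightarrow S_{t}$ with $S_{t}\in\mathcal{S}$; I then take the coproduct $\coprod_{t\in T_{\alpha}}(N_{\alpha}\rightarrowtail E_{t}\twoheadrightarrow S_{t})$ and let $N_{\alpha+1}$ be the pushout of $\coprod_{t}E_{t}\longleftarrow\coprod_{t}N_{\alpha}\overset{\nabla}{\longrightarrow}N_{\alpha}$ along the codiagonal $\nabla$. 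These colimits exist because $\mathcal{A}$ is cocomplete, and since in an exact category the pushout of an inflation along any morphism is an inflation with the same cokernel, $N_{\alpha}\rightarrowtail N_{\alpha+1}$ is an inflation with cokernel $\coprod_{t\in T_{\alpha}}S_{t}$. At a limit step put $N_{\alpha}=\varinjlim_{\beta<\alpha}N_{\beta}$, and finally $N=\varinjlim_{\alpha<\lambda}N_{\alpha}$. Using that in a locally presentable exact category transfinite compositions of inflations along $\lambda$-continuous chains are again inflations with the expected cokernels, $M\rightarrowtail N$ is an inflation whose cokernel $X$ is filtered by the objects $\coprod_{t\in T_{\alpha}}S_{t}$.

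Next I would verify that $M\rightarrowtail N\twoheadrightarrow X$ is the desired special $\mathcal{Y}$-preenvelope. First, $X\in\mathcal{X}$: by Eklof's lemma the class ${}^{\perp}\mathcal{Y}$ is closed under transfinite extensions, and $X$ is filtered by coproducts of objects of $\mathcal{S}$, each of which lies in $\mathcal{X}={}^{\perp}\mathcal{Y}$ (which is closed under coproducts), so $X\in\mathcal{X}$. Second, $N\in\mathcal{S}^{\perp}=\mathcal{Y}$: fix $S\in\mathcal{S}$ and $\xi\in\Ext_{\mathcal{A}}^{1}(S,N)$. Since $\Ext_{\mathcal{A}}^{1}(S,-)$ commutes with the $\lambda$-directed colimit $N=\varinjlim_{\alpha<\lambda}N_{\alpha}$, the class $\xi$ is the image of some $\xi_{\alpha}\in\Ext_{\mathcal{A}}^{1}(S,N_{\alpha})$ under the map induced by $N_{\alpha}\rightarrowtail N$. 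But $\xi_{\alpha}$ is the class of one of the conflations indexed by $T_{\alpha}$, whose pushout along $N_{\alpha}\rightarrowtail N_{\alpha+1}$ splits by construction; hence the image of $\xi_{\alpha}$ in $\Ext_{\mathcal{A}}^{1}(S,N_{\alpha+1})$, and therefore in $\Ext_{\mathcal{A}}^{1}(S,N)$, vanishes, i.e.\ $\xi=0$. Thus $\Ext_{\mathcal{A}}^{1}(S,N)=0$ for every $S\in\mathcal{S}$, so $N\in\mathcal{Y}$, and combining with the reduction of the first paragraph completes the argument.

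I expect the technical heart to be the interaction of the exact structure of $\mathcal{A}$ with (transfinite and $\lambda$-directed) colimits: that transfinite compositions of inflations are inflations with the expected cokernels, and that $\Ext_{\mathcal{A}}^{1}(S,-)$ preserves $\lambda$-directed colimits for $S$ with $\lambda$-presentable projective presentation. In a general exact category either can fail, but both hold in a locally presentable exact category with enough projectives once $\lambda$ is chosen above the presentability ranks of the objects of $\mathcal{S}$ and of the syzygies $K_{S}$; this is precisely where the hypotheses ``locally presentable'', ``with cokernels'' and ``enough projectives'' enter, and carefully establishing (or citing, e.g.\ from the theory of the small object argument in efficient exact categories) these facts is the main point requiring care. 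Everything else---Eklof's lemma, the Salce reduction, and the pushout bookkeeping---is formal.
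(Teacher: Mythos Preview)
Your argument is the standard Eklof--Trlifaj small object construction carried out directly in $\mathcal{A}$, and with the caveats you already flag (closure of inflations under coproducts and transfinite composition, and $\lambda$-smallness of $\Ext^{1}(S,-)$) it is correct. The paper, however, takes a different and much shorter route: rather than re-running the small object argument, it invokes \cite[Proposition~2.11]{Sto201402} as a black box. That proposition guarantees completeness once the generating set $\mathcal{S}$ is also \emph{generating for the category} (every object is a quotient of a coproduct of members of $\mathcal{S}$), and the paper's only real work is to enlarge the given $\mathcal{S}_0$ to such a set. This is done by noting that (i) transfinite compositions of inflations exist and are inflations by \cite[Proposition~A.6]{SS2011}, (ii) since $\mathcal{A}$ has enough projectives, $\mathcal{X}$ is generating, and (iii) using local presentability and the hypothesis that $\mathcal{A}$ has cokernels, every object is a quotient of a coproduct of objects from a fixed set $\mathcal{T}$ of $\lambda$-presentables; one then adjoins to $\mathcal{S}_0$ one $\mathcal{X}$-cover $X_C$ of each $C\in\mathcal{T}$.

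What each approach buys: the paper's argument is short and offloads all the delicate exact-category colimit issues to the cited results, at the cost of being opaque about why the hypotheses matter. Your approach is self-contained and makes the role of each hypothesis transparent, but you must carefully justify that coproducts of conflations are conflations and that transfinite compositions behave well---exactly the content of \cite[Proposition~A.6]{SS2011} that the paper cites. Note also a small difference in how ``cokernels'' is used: you invoke it implicitly to form pushouts and quotients in the transfinite construction, whereas the paper uses it explicitly to pass from ``$\lambda$-direct limit of objects in $\mathcal{T}$'' to ``quotient of a coproduct of objects in $\mathcal{T}$''.
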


\begin{proof}
	Any accessible category is a WIC category by \cite[Observation 2.4 and Remark 1.21]{AR1994}.
	In addition, according to \cite[Proposition A.6]{SS2011} and our assumptions,
	arbitrary transfinite compositions of inflations in $\mathcal{A}$ exist and are themselves inflations.
	
	In view of \cite[Proposition 2.11]{Sto201402},
	it suffices to construct a set $\mathcal{S}$ of objects in $\mathcal{A}$
	such that
	
	(1) $\mathcal{S}$ generates the cotorsion pair $(\mathcal{X}, \mathcal{Y})$, and
	
	(2) any object from $\mathcal{A}$ is a quotient of a coproduct of objects from $\mathcal{S}$
	(in other words, $\mathcal{S}$ is generating).
	
	By the assumption on $(\mathcal{X}, \mathcal{Y})$,
	one can take a set $\mathcal{S}_0$ of objects in $\mathcal{A}$ such that $\mathcal{S}_0^{\perp} = \mathcal{Y}$.
	Suppose that $\mathcal{A}$ is a locally $\lambda$-presentable for some regular cardinal $\lambda$.
	Moreover, we can assume the cardinal $\lambda$ is good enough
	such that any object from $\mathcal{S}_0$ is $\lambda$-presentable
	(see the Remark following \cite[Proposition 1.16]{AR1994}).
	Meanwhile, there is a set $\mathcal{T}$ of $\lambda$-presentable objects
	such that every object in $\mathcal{A}$ is a $\lambda$-direct limit of objects from $\mathcal{T}$.
	Indeed, every object in $\mathcal{A}$ is a quotient of a coproduct of objects from $\mathcal{T}$
	since $\mathcal{A}$ has cokernels and is cocomplete.
	Note that the class $\mathcal{X}$ is generating
	since $\mathcal{A}$ has enough projectives.
	Consequently, any $\lambda$-presentable object $C$ from $\mathcal{T}$
	is a quotient of an object $X_C$ from $\mathcal{X}$.
	Now, add these objects $X_C$ to the set $\mathcal{S}_0$,
	and then we get a new set $\mathcal{S}$
	which generates the cotorsion pair $(\mathcal{X}, \mathcal{Y})$
	and is generating for $\mathcal{A}$.
\end{proof}

\begin{rem}\label{T-2} {\rm
		G\"{o}bel and  Trlifaj \cite[Theorem 3.2.1]{GT2006} proved
		the above result for $\mathcal{A} = R$-Mod,
		the category of modules over an arbitrary ring $R$.
		They also remarked in \cite[Remark 3.2.2(b)]{GT2006}
		that the proof of \cite[Theorem 3.2.1]{GT2006}
		can be extended to any Grothendieck category.}
\end{rem}

\begin{lem}\label{T-3} Let $\mathcal{A}$ be a WIC category with enough projectives and enough injectives  and $(\mathcal{X}, \mathcal{Y})$ a cotorsion pair generated by a set of objects in $\mathcal{A}$.
	Then, for any integer $n\geq 0$, $(^{\perp}\mathcal{Y}_n, \mathcal{Y}_n)$ is a cotorsion pair generated by a set of objects in $\mathcal{A}$.
\end{lem}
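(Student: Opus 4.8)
The plan is to exhibit a \emph{set} $\mathcal{T}$ of objects of $\mathcal{A}$ with $\mathcal{T}^{\perp}=\mathcal{Y}_n$; this is all that is needed. Indeed, for any class $\mathcal{T}$ one has $\mathcal{T}\subseteq{}^{\perp}(\mathcal{T}^{\perp})$, hence $({}^{\perp}\mathcal{Y}_n)^{\perp}=({}^{\perp}(\mathcal{T}^{\perp}))^{\perp}\subseteq\mathcal{T}^{\perp}=\mathcal{Y}_n$, while $\mathcal{Y}_n\subseteq({}^{\perp}\mathcal{Y}_n)^{\perp}$ and ${}^{\perp}(({}^{\perp}\mathcal{Y}_n)^{\perp})={}^{\perp}\mathcal{Y}_n$ are automatic, so $({}^{\perp}\mathcal{Y}_n,\mathcal{Y}_n)$ is a cotorsion pair, and it is by construction generated by the set $\mathcal{T}$. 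Thus the statement reduces to producing such a $\mathcal{T}$.

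Fix a set $\mathcal{S}$ with $\mathcal{S}^{\perp}=\mathcal{Y}$; note $\mathcal{S}\subseteq{}^{\perp}(\mathcal{S}^{\perp})=\mathcal{X}$. Since $\mathcal{A}$ has enough projectives, for each $S\in\mathcal{S}$ I would choose conflations $\Omega^{j+1}S\rightarrowtail P_j\twoheadrightarrow\Omega^{j}S$ with $P_j$ projective, $\Omega^{0}S=S$, $0\le j<n$, obtaining an $n$-th syzygy $\Omega^{n}S$ together with dimension-shift isomorphisms $\Ext^{1}_{\mathcal{A}}(\Omega^{n}S,-)\cong\Ext^{n+1}_{\mathcal{A}}(S,-)$, and I would set $\mathcal{T}:=\{\Omega^{n}S:S\in\mathcal{S}\}$, which is a set. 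The inclusion $\mathcal{T}^{\perp}\subseteq\mathcal{Y}_n$ is the easy half: if $M\in\mathcal{T}^{\perp}$, then $\Ext^{n+1}_{\mathcal{A}}(S,M)=0$ for every $S\in\mathcal{S}$; using that $\mathcal{A}$ has enough injectives, pick an acyclic sequence $M\rightarrowtail I^{0}\to\cdots\to I^{n-1}\twoheadrightarrow C$ with each $I^{j}$ injective, and shift dimension along it (the $I^{j}$ being injective) to get $\Ext^{1}_{\mathcal{A}}(S,C)\cong\Ext^{n+1}_{\mathcal{A}}(S,M)=0$ for all $S$, i.e. $C\in\mathcal{S}^{\perp}=\mathcal{Y}$; since $\mathcal{I}\subseteq\mathcal{Y}$, this very sequence is a $\mathcal{Y}$-coresolution of length $n$, so $\mathcal{Y}\text{-}\id_{\mathcal{A}}(M)\le n$ and $M\in\mathcal{Y}_n$.

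The reverse inclusion $\mathcal{Y}_n\subseteq\mathcal{T}^{\perp}$ — equivalently, that $M\in\mathcal{Y}_n$ forces $\Ext^{n+1}_{\mathcal{A}}(S,M)=0$ for all $S\in\mathcal{S}$ — is the step I expect to be the main obstacle, precisely because here $(\mathcal{X},\mathcal{Y})$ is assumed neither complete nor hereditary, so one cannot simply invoke Lemma \ref{06-1}(4), and naive dimension-shifting along a $\mathcal{Y}$-coresolution of $M$ does not visibly annihilate the higher $\Ext$-groups of its $\mathcal{Y}$-entries. I would attack it by induction on $n$: given a $\mathcal{Y}$-coresolution of $M$, peel off a first cosyzygy $M\rightarrowtail Y^{0}\twoheadrightarrow Z$ with $Y^{0}\in\mathcal{Y}$ and $Z\in\mathcal{Y}_{n-1}$, form the pushout of $M\rightarrowtail Y^{0}$ along an injective embedding $M\rightarrowtail I^{0}$, and use that the induced inflation out of $I^{0}$ splits (so the pushout is $I^{0}\oplus Z$) to compare the injective cosyzygy $C$ of $M$ with $Z$; combining the inductive hypothesis for $Z$ with the stability of $\mathcal{Y}$ under extensions and direct summands and with $\mathcal{I}\subseteq\mathcal{Y}$ should force $C\in\mathcal{Y}_{n-1}$, whence, continuing the injective coresolution, the $n$-th injective cosyzygy of $M$ lies in $\mathcal{Y}$ and $\Ext^{n+1}_{\mathcal{A}}(S,M)=0$. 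Making this comparison close the induction cleanly, rather than only up to a cokernel one cannot control without heredity, is the delicate point; if it does not, the remedy is to enlarge $\mathcal{T}$ — e.g. adjoining the lower syzygies $\Omega^{j}S$ ($0\le j<n$), or syzygies of a generating family for ${}^{\perp}\mathcal{Y}$ — keeping $\mathcal{T}$ essentially small while trimming $\mathcal{T}^{\perp}$ down to exactly $\mathcal{Y}_n$.
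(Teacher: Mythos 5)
Your reduction to producing a set $\mathcal{T}$ with $\mathcal{T}^{\perp}=\mathcal{Y}_n$, and your proof of the inclusion $\mathcal{T}^{\perp}\subseteq\mathcal{Y}_n$ for $\mathcal{T}=\{\Omega^{n}S : S\in\mathcal{S}\}$, are both fine. But the step you yourself flag as the main obstacle is a genuine gap, and worse: under the hypotheses as stated (no heredity is assumed), the equality $\mathcal{T}^{\perp}=\mathcal{Y}_n$ is false for this choice of $\mathcal{T}$, so no refinement of the inductive comparison can close it. Take $\mathcal{A}=R\text{-}\Mod$ with $R=k[x_1,\dots,x_{n+1}]$, $n\geq 1$, and $\mathcal{S}=\{k\}$ where $k=R/(x_1,\dots,x_{n+1})$, so $\mathcal{Y}=k^{\perp}$. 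The Koszul complex gives $\Ext^{1}_{R}(k,R)=0$ and $\Ext^{n+1}_{R}(k,R)\cong k\neq 0$; hence $R\in\mathcal{Y}\subseteq\mathcal{Y}_n$, while $\Ext^{1}_{R}(\Omega^{n}k,R)\cong\Ext^{n+1}_{R}(k,R)\neq 0$, i.e. $R\notin\mathcal{T}^{\perp}$. The same example breaks your pushout sketch at its first step: for $n=1$ take the $\mathcal{Y}$-coresolution $R\rightarrowtail R\twoheadrightarrow 0$, so $Z=0$ and the pushout conflation is just $R\rightarrowtail I^{0}\twoheadrightarrow C$ with $I^{0}$ injective, and $\Ext^{1}_{R}(k,C)\cong\Ext^{2}_{R}(k,R)\neq 0$, so $C\notin\mathcal{Y}_0$; the closure property you invoke there (a cokernel of an inflation with kernel in $\mathcal{Y}$ and middle term in $\mathcal{Y}_{n-1}$ stays in $\mathcal{Y}_{n-1}$) is precisely a heredity-type statement the hypotheses do not supply. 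Your fallback also points the wrong way: you have already shown $\mathcal{T}^{\perp}\subseteq\mathcal{Y}_n$, so the defect is that $\mathcal{T}^{\perp}$ is too small, and enlarging $\mathcal{T}$ only shrinks $\mathcal{T}^{\perp}$ further.

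What is true is that if $(\mathcal{X},\mathcal{Y})$ is moreover hereditary, then (since injectives lie in $\mathcal{Y}$ and $\mathcal{Y}$ is closed under cokernels of inflations, dimension shifting along an injective coresolution gives $\Ext^{i}_{\mathcal{A}}(S,Y)=0$ for all $i\geq 1$, $S\in\mathcal{S}\subseteq\mathcal{X}$, $Y\in\mathcal{Y}$) the needed inclusion follows at once by shifting along a $\mathcal{Y}$-coresolution of $M\in\mathcal{Y}_n$: $\Ext^{n+1}_{\mathcal{A}}(S,M)\cong\Ext^{1}_{\mathcal{A}}(S,Y^{n})=0$, and your $\mathcal{T}$ does generate $({}^{\perp}\mathcal{Y}_n,\mathcal{Y}_n)$. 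That hereditary situation is the one occurring in the paper's applications (the pairs come from hereditary Hovey triples, cf. Remark \ref{T-2-1} and Remark \ref{T}), but it is not what the lemma states, so as written your argument does not prove the statement: you would need either to add the hereditary hypothesis or to construct a generating set by an entirely different method. Note also that the paper itself gives no argument here; its proof is a pointer to \cite[Lemma 1.13]{C-W2024}, so there is no intermediate step of the paper's proof to fall back on, and the burden of the missing inclusion is exactly the content your proposal leaves open.
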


\begin{proof}
	Similar to \cite[Lemma 1.13]{C-W2024}.
\end{proof}

Now we can give a sufficient condition for
a cotorsion pair $(\mathcal{X}, \mathcal{Y})$ to be right extendable.

\begin{prop} \label{T-4}
	Let $\mathcal{A}$ be a locally presentable exact category with cokernels
	and enough projectives and injectives,
	and $(\mathcal{X}, \mathcal{Y})$ a cotorsion pair in $\mathcal{A}$.
	If $(\mathcal{X}, \mathcal{Y})$ is generated
	by a set of objects in $\mathcal{A}$,
	then we have a complete cotorsion pair
	$(^{\perp}\mathcal{Y}_n, \mathcal{Y}_n)$ for any integer $n\geq 0$,
	that is, $(\mathcal{X}, \mathcal{Y})$ is right extendable.
\end{prop}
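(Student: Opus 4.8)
The plan is to deduce this immediately by chaining together the two preceding lemmas, so almost all of the real work has already been done and the proof itself will be short. First I would invoke Lemma \ref{T-3}: since $(\mathcal{X}, \mathcal{Y})$ is a cotorsion pair generated by a set of objects in $\mathcal{A}$, and $\mathcal{A}$ is WIC (an accessible category is automatically WIC, as recalled in the proof of Lemma \ref{T-1}) with enough projectives and injectives, the lemma applies to each integer $n \geq 0$ and produces a cotorsion pair $({}^{\perp}\mathcal{Y}_n, \mathcal{Y}_n)$ that is again generated by a set of objects in $\mathcal{A}$. In particular this records, for free, that $\mathcal{Y}_n = ({}^{\perp}\mathcal{Y}_n)^{\perp}$ and ${}^{\perp}(\mathcal{Y}_n) = {}^{\perp}\mathcal{Y}_n$, so that we genuinely have a cotorsion pair rather than just a pair of classes.

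Next I would apply Lemma \ref{T-1} to the cotorsion pair $({}^{\perp}\mathcal{Y}_n, \mathcal{Y}_n)$. The ambient category is unchanged, so it is still locally presentable, has cokernels, and has enough projectives and injectives; and by the previous step the pair is generated by a set. Hence Lemma \ref{T-1} yields that $({}^{\perp}\mathcal{Y}_n, \mathcal{Y}_n)$ is complete. Since $n \geq 0$ was arbitrary, Definition \ref{D-1} shows precisely that $(\mathcal{X}, \mathcal{Y})$ is right extendable, which is the assertion.

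There is no substantial obstacle at the level of this proposition; the only points to watch are bookkeeping ones, namely checking that every hypothesis of Lemmas \ref{T-1} and \ref{T-3} is indeed inherited here (most notably the WIC condition needed for Lemma \ref{T-3}, which follows from accessibility, and the ``generated by a set'' condition, which is exactly the output of Lemma \ref{T-3}). The genuinely hard content --- constructing the generating set and running the small object argument in the exact, locally presentable setting --- lives in the proofs of those two lemmas, not in the proof of this statement.
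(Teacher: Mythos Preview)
Your proposal is correct and follows exactly the same approach as the paper: invoke Lemma \ref{T-3} to obtain that $({}^{\perp}\mathcal{Y}_n, \mathcal{Y}_n)$ is a cotorsion pair generated by a set, then apply Lemma \ref{T-1} to conclude completeness. The paper's proof is two sentences long and does precisely this; your additional remarks on verifying the WIC hypothesis and on what Lemma \ref{T-3} actually outputs are accurate and merely make explicit what the paper leaves implicit.
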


\begin{proof}
	By Lemma \ref{T-3},
	$(^{\perp}\mathcal{Y}_n, \mathcal{Y}_n)$ is a cotorsion pair
	generated by a set in $\mathcal{A}$.
	In addition, it is complete by Lemma \ref{T-1}.
\end{proof}

\begin{rem}\label{T}
	{\rm Let $R$ be a ring and $(\mathcal{X}, \mathcal{Y})$
		a cotorsion pair generated by a set of objects in the category $R$-Mod
		of $R$-modules,
		by \cite[Theorem 2.2]{IEA2012} and \cite[Lemma 1.13]{C-W2024},
		$(\mathcal{X}_n, \mathcal{X}_n^{\perp})$ and $(^{\perp}\mathcal{Y}_n, \mathcal{Y}_n)$
		are cotorsion pairs generated by sets,
		and hence they are complete by \cite[Theorem 3.2.1]{GT2006}.
		Therefore, $(\mathcal{X}, \mathcal{Y})$ is both left and right extendable.
		Moreover, if $(\mathcal{X}, \mathcal{Y})$ is hereditary,
		then $(\mathcal{X}_n, \mathcal{X}_n^{\perp})$ and $(^{\perp}\mathcal{Y}_n, \mathcal{Y}_n)$
		are also hereditary (see Remark \ref{T-2-1}).}
\end{rem}

The following corollary is an immediate consequence of
Proposition \ref{T-4} and Theorem \ref{modle structure 1}.

\begin{cor} \label{T-5}
	Let $\mathcal{A}$ be a locally presentable exact category with cokernels and enough projectives and injectives, and $\mathcal{M}=(\mathcal{C}, \mathcal{W}, \mathcal{F})$ a hereditary Hovey triple in $\mathcal{A}$.
	If $(\mathcal{C}, \mathcal{W} \cap \mathcal{F})$ is generated by a set
	then, for any nonnegative integer $n$, we have the following hereditary Hovey triple
	$$\mathcal{M}_{n}=(^{\perp}(\mathcal{W}\cap\mathcal{F})_{n}, \mathcal{W}, \mathcal{F}_{n})$$
	and the following triangle equivalence
	$$\underline{\mathcal{C}\cap \mathcal{F}}
	\simeq \Ho(\mathcal{M})
	= \Ho(\mathcal{M}_{n})
	\simeq \underline{^{\perp}(\mathcal{W}\cap \mathcal{F})_{n} \cap \mathcal{F}_{n}}\,.
	$$
\end{cor}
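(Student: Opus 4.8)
The plan is to assemble Corollary~\ref{T-5} as a formal consequence of the two named ingredients, so the proof is genuinely short and structural. First I would record that $\mathcal{A}$, being locally presentable and exact with cokernels and enough projectives and injectives, satisfies all the standing hypotheses of Proposition~\ref{T-4}: in particular it is WIC by \cite[Observation 2.4 and Remark 1.21]{AR1994}, as already used in the proof of Lemma~\ref{T-1}. Applying Proposition~\ref{T-4} to the cotorsion pair $(\mathcal{C}, \mathcal{W}\cap\mathcal{F})$, which is generated by a set by hypothesis, yields that $(\mathcal{C}, \mathcal{W}\cap\mathcal{F})$ is right extendable, i.e.\ $({}^{\perp}(\mathcal{W}\cap\mathcal{F})_n, (\mathcal{W}\cap\mathcal{F})_n)$ is a complete cotorsion pair for every $n\geq 0$.

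Next I would simply invoke Theorem~\ref{modle structure 1}. Its hypotheses are exactly that $\mathcal{A}$ is a WIC exact category with enough projectives and injectives and that $\mathcal{M}=(\mathcal{C},\mathcal{W},\mathcal{F})$ is a hereditary Hovey triple whose cotorsion pair $(\mathcal{C},\mathcal{W}\cap\mathcal{F})$ is right extendable --- all of which we have just verified. Part~(2) of that theorem then delivers, for each nonnegative integer $n$, the hereditary Hovey triple
$$\mathcal{M}_n = ({}^{\perp}(\mathcal{W}\cap\mathcal{F})_n,\ \mathcal{W},\ \mathcal{F}_n)$$
together with the statement that ${}^{\perp}(\mathcal{W}\cap\mathcal{F})_n \cap \mathcal{F}_n$ is Frobenius and the chain of triangle equivalences $\Ho(\mathcal{M}_n)=\Ho(\mathcal{M})\simeq \underline{\mathcal{C}\cap\mathcal{F}}$ and $\underline{{}^{\perp}(\mathcal{W}\cap\mathcal{F})_n\cap\mathcal{F}_n}\simeq \Ho(\mathcal{M}_n)$. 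Concatenating these gives precisely the displayed equivalence in the corollary.

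There is essentially no obstacle here: the corollary is a packaging of Proposition~\ref{T-4} feeding into Theorem~\ref{modle structure 1}, and the only thing to be careful about is checking that the ambient-category hypotheses of the two cited results are compatible (they are, since ``locally presentable exact with cokernels and enough projectives and injectives'' implies ``WIC exact with enough projectives and injectives''). If anything deserves a sentence of comment it is that the $n=0$ case is trivial --- one has $\mathcal{M}_0=\mathcal{M}$ --- so the content is entirely in $n\geq 1$, but even that is handled uniformly by the cited theorem. Accordingly the write-up is just: verify the hypotheses, cite Proposition~\ref{T-4}, cite Theorem~\ref{modle structure 1}.
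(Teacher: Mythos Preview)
Your proposal is correct and matches the paper's approach exactly: the paper states that the corollary ``is an immediate consequence of Proposition~\ref{T-4} and Theorem~\ref{modle structure 1}'', which is precisely the two-step argument you outline. Your added remark that local presentability ensures $\mathcal{A}$ is WIC (so that Theorem~\ref{modle structure 1} applies) is a helpful clarification the paper leaves implicit.
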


By Remark \ref{T} and Theorems \ref{modle structure} and \ref{modle structure 1},
we obtain the following result, which looks symmetric.

\begin{cor} \label{T-6} Let $R$ be a ring and $\mathcal{M}=(\mathcal{C}, \mathcal{W}, \mathcal{F})$ a hereditary Hovey triple in $R\text{-}\Mod$.
	If both $(\mathcal{C}\cap \mathcal{W}, \mathcal{F})$
	and $(\mathcal{C}, \mathcal{W}\cap\mathcal{F})$ are generated by sets
	then, for any nonnegative integer $n$,
	we have the following hereditary Hovey triples
	$$\mathcal{M}_{p,n} = (\mathcal{C}_{n}, \mathcal{W}, (\mathcal{C}\cap \mathcal{W})_{n}^{\perp})
	\quad\text{and}\quad
	\mathcal{M}_{i,n} = (^{\perp}(\mathcal{W}\cap\mathcal{F}) _{n}, \mathcal{W}, \mathcal{F}_{n})$$
	and the following triangle equivalences
	\begin{center}$\begin{aligned}
			\underline{\mathcal{C}_{n} \cap (\mathcal{C}\cap \mathcal{W})_{n} ^{\perp}} \simeq \Ho(\mathcal{M}_{p,n}) &= \Ho(\mathcal{M})\simeq \underline{\mathcal{C}\cap \mathcal{F}}\\
			&=\Ho(\mathcal{M}_{i,n})\simeq \underline{^{\perp}(\mathcal{W}\cap \mathcal{F})_{n} \cap \mathcal{F}_{n}}
		\end{aligned}$\end{center}
\end{cor}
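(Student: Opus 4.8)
The plan is to reduce everything to Theorems \ref{modle structure} and \ref{modle structure 1}, which already package all of the structural content, and then to concatenate the two resulting chains of triangle equivalences along the homotopy category $\Ho(\mathcal{M})\simeq\underline{\mathcal{C}\cap\mathcal{F}}$ that they have in common. So the only work is to check that the hypotheses of both theorems are satisfied in the present setting.

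First I would record that $R\text{-}\Mod$ is an abelian category with enough projectives (free modules) and enough injectives; in particular it is a WIC exact category with enough projectives and injectives, so the standing hypotheses of Theorems \ref{modle structure} and \ref{modle structure 1} hold. Next, since the Hovey triple $\mathcal{M}$ is hereditary, both cotorsion pairs $(\mathcal{C}\cap\mathcal{W},\mathcal{F})$ and $(\mathcal{C},\mathcal{W}\cap\mathcal{F})$ are hereditary; being moreover generated by sets, Remark \ref{T} applies and shows that each of them is both left and right extendable. Concretely, the pairs $((\mathcal{C}\cap\mathcal{W})_{n},(\mathcal{C}\cap\mathcal{W})_{n}^{\perp})$, $({}^{\perp}\mathcal{F}_{n},\mathcal{F}_{n})$, $(\mathcal{C}_{n},\mathcal{C}_{n}^{\perp})$ and $({}^{\perp}(\mathcal{W}\cap\mathcal{F})_{n},(\mathcal{W}\cap\mathcal{F})_{n})$ are all cotorsion pairs generated by sets, hence complete by \cite[Theorem 3.2.1]{GT2006}, and hereditary by Remark \ref{T-2-1}. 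In particular $(\mathcal{C}\cap\mathcal{W},\mathcal{F})$ is left extendable and $(\mathcal{C},\mathcal{W}\cap\mathcal{F})$ is right extendable, which are exactly the inputs needed below.

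Applying Theorem \ref{modle structure} with the left extendability of $(\mathcal{C}\cap\mathcal{W},\mathcal{F})$ then yields that $\mathcal{M}_{p,n}=(\mathcal{C}_{n},\mathcal{W},(\mathcal{C}\cap\mathcal{W})_{n}^{\perp})$ is a hereditary Hovey triple, that $\mathcal{C}_{n}\cap(\mathcal{C}\cap\mathcal{W})_{n}^{\perp}$ is a Frobenius category with projective-injective objects $(\mathcal{C}\cap\mathcal{W})_{n}\cap(\mathcal{C}\cap\mathcal{W})_{n}^{\perp}$, and that
$$\underline{\mathcal{C}_{n}\cap(\mathcal{C}\cap\mathcal{W})_{n}^{\perp}}\simeq\Ho(\mathcal{M}_{p,n})=\Ho(\mathcal{M})\simeq\underline{\mathcal{C}\cap\mathcal{F}}.$$
Dually, Theorem \ref{modle structure 1} applied to the right extendable pair $(\mathcal{C},\mathcal{W}\cap\mathcal{F})$ produces the hereditary Hovey triple $\mathcal{M}_{i,n}=({}^{\perp}(\mathcal{W}\cap\mathcal{F})_{n},\mathcal{W},\mathcal{F}_{n})$, together with the Frobenius structure on ${}^{\perp}(\mathcal{W}\cap\mathcal{F})_{n}\cap\mathcal{F}_{n}$ and the equivalences
$$\underline{{}^{\perp}(\mathcal{W}\cap\mathcal{F})_{n}\cap\mathcal{F}_{n}}\simeq\Ho(\mathcal{M}_{i,n})=\Ho(\mathcal{M})\simeq\underline{\mathcal{C}\cap\mathcal{F}}.$$
Splicing these two displays along the shared middle terms $\Ho(\mathcal{M})=\underline{\mathcal{C}\cap\mathcal{F}}$ gives the asserted string of triangle equivalences.

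Every step above is a direct invocation of an already-established result, so there is no substantive obstacle; the only points that deserve a line of justification are verifying that $R\text{-}\Mod$ meets the ambient assumptions of both theorems (enough projectives and injectives, WIC) and that, for a \emph{hereditary} cotorsion pair, being generated by a set genuinely entails the one-sided extendability each theorem consumes — and both of these are precisely what Remark \ref{T} records.
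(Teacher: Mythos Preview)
Your proposal is correct and follows exactly the route the paper takes: the corollary is stated as an immediate consequence of Remark~\ref{T} together with Theorems~\ref{modle structure} and~\ref{modle structure 1}, and you have simply unpacked that sentence. The only addition you make is the explicit verification that $R\text{-}\Mod$ satisfies the ambient WIC and enough-projectives/injectives hypotheses, which is harmless.
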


\section{\bf Some applications}
\subsection {$Q$-shaped derived category}
In this section,
we assume that $\mathbf{k}$ is a non-trivial commutative Gorenstein ring,
$R$ is an associative $\mathbf{k}$-algebra with finite projective dimension as a $\mathbf{k}$-module.

\begin{setup} \label{setup}
	\emph{\cite[Setup 2.5]{HJ2022} Let $Q$ be a small $\mathbf{k}$-pre-additive category satisfying the following conditions.}
	
	\emph{(1)} Hom-finiteness\emph{: each hom $\mathbf{k}$-module $Q(p, q)$ is finitely generated and projective.}
	
	\emph{(2)} Local Boundedness\emph{: for each $q \in Q$, there are only finitely many objects in $Q$ mapping non-trivially into or out of $q$.}
	
	\emph{(3)} Existence of a Serre Functor \emph{(}relative to $\mathbf{k}$\emph{): there exists a $\mathbf{k}$-linear auto-equivalence
		$\mathbb{S} : Q \to Q$ and a natural isomorphism $Q(p, q) \cong \Hom_{\mathbf{k}}(Q(q, \mathbb{S}(p)), \mathbf{k})$.}
	
	\emph{(4)} Retraction Property\emph{: for each $q \in Q$,
		the unit map $\mathbf{k} \to Q(q, q)$ given by $x \mapsto x \cdot \id_q$ has a $\mathbf{k}$-module retraction;
		whence there is a $\mathbf{k}$-module decomposition $Q(q, q) = (\mathbf{k} \cdot \id_q) \oplus \tau _q$.}
\end{setup}
\noindent Denote by $_{Q, R}$Mod the category of $\mathbf{k}$-linear functors from $Q$ to the left $R$-module
category $R$-Mod.
If $R = \mathbf{k}$, then we set $_Q$Mod instead of $_{Q, \mathbf{k}}$Mod.
In view of \cite[Proposition 3.12]{HJ2022}, $_{Q, R}$Mod is a locally finitely presentable Grothendieck category with enough projectives.
The $Q$-shaped derived category $\mathbf{D}_Q(R)$ of $R$,
introduced by Holm and J{\o}rgensen \cite{HJ2022},
is defined as the homotopy category of the model category $_{Q, R}$Mod,
which is a generalization of derived category of algebra.
See \cite{HJ2024} for a quick introduction.

\begin{lem} \label{IF} \emph{\cite[Theorem 6.1, 6.5]{HJ2022}}
	Assume that $Q$ satisfies the conditions in \emph{Setup \ref{setup}}, $\mathbf{k}$ is commutative and Gorenstein,
	and $R$ has finite projective dimension as a $\mathbf{k}$-module.
	There exists a hereditary Hovey triple
	$$\mathcal{M} = (_{Q, R}\Mod, \mathcal{E}, \mathcal{E}^{\perp})$$
	in $_{Q, R}\Mod$, where $\mathcal{E} = \{X {}\in _{Q, R}\Mod ~|~ X^{\natural} \text{ has finite projective dimension in } _Q\Mod\}$ and $(-)^{\natural}$ denotes the forgetful functor from $_{Q, R}\Mod$ to $_Q\Mod$.
	Moreover, there are equivalences of categories,
	$$\underline{\mathcal{E}^{\perp}} \simeq \Ho(\mathcal{M}) \simeq \mathbf{D}_Q(R).$$
\end{lem}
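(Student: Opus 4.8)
The plan is to exhibit $\mathcal{M}$ as a hereditary Hovey triple and then read off its homotopy category from the general theory recalled in Section 2. The category ${}_{Q,R}\Mod$ is Grothendieck, hence abelian (so WIC) with enough injectives, and it has enough projectives by \cite[Proposition 3.12]{HJ2022}; thus the Hovey correspondence (as extended by Gillespie) applies, and it is enough to check: (a) $\mathcal{W}=\mathcal{E}$ is thick; (b) $(\mathcal{E},\mathcal{E}^{\perp})$ and $({}_{Q,R}\Mod,\mathcal{E}\cap\mathcal{E}^{\perp})$ are complete hereditary cotorsion pairs with $\mathcal{E}\cap\mathcal{E}^{\perp}=\mathcal{I}$ the class of injectives; and (c) $\Ho(\mathcal{M})\simeq\mathbf{D}_Q(R)$.

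For (a), the forgetful functor $(-)^{\natural}\colon{}_{Q,R}\Mod\to{}_Q\Mod$ is exact, and in the Grothendieck category ${}_Q\Mod$ (which has enough projectives) the class of objects of finite projective dimension is closed under direct summands and satisfies two-out-of-three along short exact sequences by standard dimension shifting (compare Lemma \ref{06-2}); pulling this back along $(-)^{\natural}$ gives thickness of $\mathcal{E}$. One also needs two structural facts about ${}_Q\Mod$: under the Setup \ref{setup} conditions together with $\mathbf{k}$ Gorenstein and $\pd_{\mathbf{k}}R<\infty$, the category ${}_Q\Mod$ is Gorenstein-like, so there is a uniform bound $N$ with $\mathcal{E}=\{X\mid\pd_{{}_Q\Mod}(X^{\natural})\le N\}$, and every injective object of ${}_{Q,R}\Mod$ restricts to an object of finite projective dimension in ${}_Q\Mod$; the Serre functor and the retraction property of Setup \ref{setup} enter exactly here.

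For (b): $({}_{Q,R}\Mod,\mathcal{I})$ is a complete cotorsion pair because the category has enough projectives and injectives, and its right-hand class $\mathcal{I}$ equals $\mathcal{W}\cap\mathcal{F}$. For $(\mathcal{E},\mathcal{E}^{\perp})$ the idea is to exhibit it as a cotorsion pair generated by a set: transport a set generating the finite-projective-dimension cotorsion pair on the locally finitely presentable category ${}_Q\Mod$ across the adjunction between ${}_{Q,R}\Mod$ and ${}_Q\Mod$ to obtain a generating set for $(\mathcal{E},\mathcal{E}^{\perp})$, and then invoke Lemma \ref{T-1} for completeness; hereditariness follows from hereditariness of the finite-projective-dimension cotorsion pair via Lemma \ref{06-2} and \cite[Lemma 6.17]{Sto201401}. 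Hence $\mathcal{M}=({}_{Q,R}\Mod,\mathcal{E},\mathcal{E}^{\perp})$ is a hereditary Hovey triple, and by \cite[Theorem 6.21]{Sto201401} the subcategory $\mathcal{C}\cap\mathcal{F}=\mathcal{E}^{\perp}$ is Frobenius with projective-injective objects $\mathcal{C}\cap\mathcal{W}\cap\mathcal{F}=\mathcal{I}$ and $\Ho(\mathcal{M})\simeq\underline{\mathcal{E}^{\perp}}$. For (c): since $\mathcal{C}$ is all of ${}_{Q,R}\Mod$, the weak equivalences of $\mathcal{M}$ are precisely the morphisms whose mapping cone lies in $\mathcal{W}=\mathcal{E}$, which is Holm--J{\o}rgensen's class of quasi-isomorphisms, so $\mathcal{M}$ is the model structure defining $\mathbf{D}_Q(R)$ and $\Ho(\mathcal{M})=\mathbf{D}_Q(R)$. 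Combining the two identifications gives $\underline{\mathcal{E}^{\perp}}\simeq\Ho(\mathcal{M})\simeq\mathbf{D}_Q(R)$.

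The step I expect to be the main obstacle is (b), and within it the two structural facts invoked in (a): the uniform bound $\mathcal{E}=\{X\mid\pd_{{}_Q\Mod}(X^{\natural})\le N\}$ and the identification $\mathcal{E}\cap\mathcal{E}^{\perp}=\mathcal{I}$, i.e. that the trivially fibrant objects are exactly the injectives. This is precisely where the Gorenstein hypothesis on $\mathbf{k}$, the finiteness of $\pd_{\mathbf{k}}R$, and the Hom-finiteness, local boundedness, Serre functor and retraction conditions of Setup \ref{setup} are used in an essential way; this analysis is the technical heart of \cite{HJ2022}, from which the present statement is quoted.
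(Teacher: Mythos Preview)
The paper does not prove this lemma at all: it is stated with an explicit citation to \cite[Theorems 6.1 and 6.5]{HJ2022} and no argument is supplied. So there is no proof in the paper for your proposal to be compared against; the authors are simply importing the result.

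As a sketch of how Holm--J{\o}rgensen establish the result, your outline is broadly accurate, and you correctly flag that the substantive work---the uniform bound giving $\mathcal{E}=\{X\mid \pd_{{}_Q\Mod}(X^{\natural})\le N\}$, the identification $\mathcal{E}\cap\mathcal{E}^{\perp}=\mathrm{Inj}(Q,R)$, and the completeness of $(\mathcal{E},\mathcal{E}^{\perp})$---is where the hypotheses of Setup~\ref{setup}, Gorensteinness of $\mathbf{k}$, and $\pd_{\mathbf{k}}R<\infty$ are used, and that this is carried out in \cite{HJ2022} rather than here. Two points of imprecision are worth noting. First, in part~(c) you invoke a ``mapping cone'' in ${}_{Q,R}\Mod$, but this category is not in general a category of chain complexes; Holm--J{\o}rgensen instead characterize weak equivalences intrinsically via homology functors $H^{[q]}_i$, and the identification $\Ho(\mathcal{M})=\mathbf{D}_Q(R)$ is part of the content of \cite[Theorem~6.1]{HJ2022} rather than something one reads off from a cone argument. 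Second, your plan in~(b) to obtain a generating set for $(\mathcal{E},\mathcal{E}^{\perp})$ by ``transporting'' one across the adjunction is too vague to count as a proof step; in \cite{HJ2022} this is done by an explicit construction. None of this affects the present paper, which only \emph{uses} the lemma.
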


We note that $(_{Q, R}\Mod, \mathcal{E} \cap \mathcal{E}^{\perp}) = (_{Q, R}\Mod, \mathrm{Inj}(Q, R))$ is right extendable by \cite[Proposition 5.3(1)]{HJ2022},
where $\mathrm{Inj}(Q, R)$ denotes the class of injective objects in $_{Q, R}\Mod$.
Hence one can obtain a new description of $Q$-shaped derived category by Theorem \ref{modle structure 1}.

\begin{thm} \label{DQ}
	Keep the conditions and notations in Lemma \ref{IF},
	then for all $n \geq 0$ we have the following hereditary Hovey triples
	$$\mathcal{M}_n = (^{\perp}(\mathrm{Inj}(Q, R)_n), \mathcal{E}, (\mathcal{E}^{\perp})_n),$$
	and the following triangle equivalences
	$$\underline{^{\perp}(\mathrm{Inj}(Q, R)_n) \cap (\mathcal{E}^{\perp})_n} \simeq \Ho(\mathcal{M}_n) \simeq \mathbf{D}_Q(R).$$
\end{thm}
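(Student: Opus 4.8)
The plan is to obtain Theorem \ref{DQ} as a direct application of Theorem \ref{modle structure 1} to the hereditary Hovey triple $\mathcal{M} = ({}_{Q,R}\Mod, \mathcal{E}, \mathcal{E}^{\perp})$ supplied by Lemma \ref{IF}. First I would check that the ambient category $_{Q,R}\Mod$ meets the running hypotheses of Theorem \ref{modle structure 1}: by \cite[Proposition 3.12]{HJ2022} it is a locally finitely presentable Grothendieck category with enough projectives, hence in particular a WIC exact category with enough projectives and injectives. Writing $\mathcal{C} = {}_{Q,R}\Mod$, $\mathcal{W} = \mathcal{E}$ and $\mathcal{F} = \mathcal{E}^{\perp}$, one has $\mathcal{W}\cap\mathcal{F} = \mathcal{E}\cap\mathcal{E}^{\perp} = \mathrm{Inj}(Q,R)$, so $(\mathcal{C}, \mathcal{W}\cap\mathcal{F}) = ({}_{Q,R}\Mod, \mathrm{Inj}(Q,R))$.

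The single non-formal ingredient is that this cotorsion pair is right extendable, i.e. that $({}^{\perp}(\mathrm{Inj}(Q,R)_n), \mathrm{Inj}(Q,R)_n)$ is a complete cotorsion pair for every $n \geq 0$; this is \cite[Proposition 5.3(1)]{HJ2022}, as recorded immediately before the statement. With this in hand, Theorem \ref{modle structure 1} applies without change and delivers, for each $n \geq 0$: the hereditary Hovey triple obtained by substituting $\mathcal{W}\cap\mathcal{F} = \mathrm{Inj}(Q,R)$ and $\mathcal{F} = \mathcal{E}^{\perp}$ into $({}^{\perp}(\mathcal{W}\cap\mathcal{F})_n, \mathcal{W}, \mathcal{F}_n)$, namely $\mathcal{M}_n = ({}^{\perp}(\mathrm{Inj}(Q,R)_n), \mathcal{E}, (\mathcal{E}^{\perp})_n)$; the fact that ${}^{\perp}(\mathrm{Inj}(Q,R)_n) \cap (\mathcal{E}^{\perp})_n$ is a Frobenius category with projective-injectives ${}^{\perp}(\mathrm{Inj}(Q,R)_n) \cap \mathrm{Inj}(Q,R)_n$; and the triangle equivalences $\underline{{}^{\perp}(\mathrm{Inj}(Q,R)_n) \cap (\mathcal{E}^{\perp})_n} \simeq \Ho(\mathcal{M}_n) = \Ho(\mathcal{M})$.

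It then remains only to compose with the equivalence $\Ho(\mathcal{M}) \simeq \mathbf{D}_Q(R)$ of Lemma \ref{IF} to conclude $\underline{{}^{\perp}(\mathrm{Inj}(Q,R)_n) \cap (\mathcal{E}^{\perp})_n} \simeq \Ho(\mathcal{M}_n) \simeq \mathbf{D}_Q(R)$. I do not expect any real obstacle here: the entire weight of the argument has already been carried by Theorem \ref{modle structure 1}, and the only external input, right-extendability of $({}_{Q,R}\Mod, \mathrm{Inj}(Q,R))$, is imported from \cite{HJ2022}. The one point warranting care is the bookkeeping of conventions --- making sure that $(\mathcal{E}^{\perp})_n$ in $\mathcal{M}_n$ is read as the $\mathcal{E}^{\perp}$-injective dimension relative to the cotorsion pair $(\mathcal{E}, \mathcal{E}^{\perp})$, while $\mathrm{Inj}(Q,R)_n$ is the (ordinary) injective dimension relative to $({}_{Q,R}\Mod, \mathrm{Inj}(Q,R))$ --- so that the classes in the displayed statement coincide verbatim with those produced by Theorem \ref{modle structure 1}.
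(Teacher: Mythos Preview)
Your proposal is correct and matches the paper's own approach exactly: the paper derives Theorem \ref{DQ} by noting (in the paragraph immediately preceding it) that $({}_{Q,R}\Mod, \mathcal{E}\cap\mathcal{E}^{\perp}) = ({}_{Q,R}\Mod, \mathrm{Inj}(Q,R))$ is right extendable by \cite[Proposition 5.3(1)]{HJ2022} and then invoking Theorem \ref{modle structure 1}. Your added checks---that ${}_{Q,R}\Mod$ is WIC with enough projectives and injectives, and that the dimension conventions line up---are routine and pose no obstacle.
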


\begin{exa} \label{D}
	    \emph{Given a ring $R$, let $R$-\Mod \ and \Ch$(R)$ be the categories of
		left $R$-modules and complexes of left $R$-modules, respectively.
		The classes of all injective complexes,
		DG-injective complexes,
		and exact complexes in \Ch$(R)$
		are denoted by $\widetilde{\mathcal{I}}$,
		\dg$(\mathcal{I})$,
		and $\mathbf{Acy}$, respectively.
		A chain complex can be viewed as a representation of the linear quiver
		$$\Gamma = \cdots \to 2 \to 1 \to 0 \to -1 \to -2 \to \cdots$$
		with the relations that consecutive arrows compose to $0$.
		By taking $Q$ to be the path category of $\Gamma$ with these relations and $\mathbf{k} = \mathbb{Z}$,
		it follows that $_{Q, R}\Mod$ is equivalent to the category \Ch$(R)$ (see, e.g., \cite{HJ2024}).
		Moreover, Lemma \ref{IF} includes a classic hereditary Hovey triple $\mathcal{M}_{\mathrm{dg}}
		= (\Ch(R), \mathbf{Acy}, \mathrm{dg}(\mathcal{I}))$ in \Ch$(R)$ as an example,
		whose homotopy category is the ordinary derived category $\mathbf{D}(R)$ of $R$
		(see, e.g., \cite[Example 3.2]{Hov2002}).
		Therefore, by Theorem \ref{DQ}, for all integers $n\geq 0$,
		there are hereditary Hovey triples
		$$\mathcal{M}_{\mathrm{dg}, n} =
		(^{\perp}(\widetilde{\mathcal{I}}_n), \mathbf{Acy}, (\mathrm{dg}(\mathcal{I}))_n)$$
		with triangle equivalences
		$$\underline{^{\perp}(\widetilde{\mathcal{I}}_n) \cap (\mathrm{dg}(\mathcal{I}))_n}
		\simeq \Ho(\mathcal{M}_{\mathrm{dg}, n})
		\simeq \mathbf{D}(R).$$
		We remind the reader that $(\mathrm{dg}(\mathcal{I}))_n$
		is different from $\mathrm{dg}(\mathcal{I}_n)$,
		where $\mathcal{I}_n$ stands for the class of left $R$-modules of injective dimension less than or equal to $n$.
		In general, for a cotorsion pair $({}^{\perp}\mathcal{Y}, \mathcal{Y})$ of left $R$-modules,
		one might be interested in the class $\mathrm{dg}(\mathcal{Y})$.
		See \cite{Gil04} for more details.}
\end{exa}

\subsection {Krause's recollement}

Let $R$ be a ring.
It is well known that both $R$-\Mod \ and \Ch$(R)$
are locally presentable abelian categories
with enough projectives and injectives.
The class of all complexes of injective $R$-modules is denoted by dw($\mathcal{I}$).
In addition,
we write ex($\mathcal{I}$) = dw$(\mathcal{I}) \cap \mathbf{Acy}$.
For a subcategory $\mathcal{H}$ of \Ch$(R)$,
the homotopy category of $\mathcal{H}$ will be denoted by
K($\mathcal{H}$).

Over a Noetherian ring $R$, Krause \cite{Kra2005} established the following recollement
\[
\xymatrixrowsep{5pc}
\xymatrixcolsep{6pc}
\xymatrix
{\K(\mathrm{ex}(\mathcal{I}))  \ar[r]^{} &\K(\mathrm{dw}(\mathcal{I})) \ar@<-3ex>[l]_{} \ar@<3ex>[l]^{} \ar[r]^{}  &\mathbf{D}(R)\ar@<-3ex>[l]_{} \ar@<3ex>[l]^{},
} \]
which is called Krause's recollement in the literature.
By the work of Becker \cite{Bec2014} or Gillespie \cite{Gil2016},
we know that such a recollement holds for any ring $R$.

The following fact is well-known (see, e.g., \cite[Example 4.8]{Gil2016}).

\begin{fact}\label{KR-0}
	Let $R$ be a ring. There are hereditary Hovey triples
	\begin{center}
		$\begin{aligned}
			\mathcal{M}_{\mathrm{dw}} =
			(\Ch(R), {}^{\perp}(\mathrm{dw}(\mathcal{I})), \mathrm{dw}(\mathcal{I}))
			\quad and \quad
			\mathcal{M}_{\mathrm{ex}} =
			(\Ch(R), {}^{\perp}(\mathrm{ex}(\mathcal{I})), \mathrm{ex}(\mathcal{I}))
		\end{aligned}$ \end{center}
	with the following triangle equivalences
	$$\Ho(\mathcal{M}_{\mathrm{dw}})\simeq \K(\mathrm{dw}(\mathcal{I}))
	\quad and \quad
	\Ho(\mathcal{M}_{\mathrm{ex}})\simeq \K(\mathrm{ex}(\mathcal{I})).$$
\end{fact}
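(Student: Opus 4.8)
The plan is to recognise both $\mathcal{M}_{\mathrm{dw}}$ and $\mathcal{M}_{\mathrm{ex}}$ as instances of the ``injective-type'' exact model structures on $\Ch(R)$ produced by Becker and by Gillespie, importing the one genuinely substantive statement --- that these data do assemble into hereditary Hovey triples --- from \cite[Example 4.8]{Gil2016} (cf.\ \cite{Bec2014}), and then to pin down the two homotopy categories by hand. Throughout I would write $\widetilde{\mathcal{I}}$ for the class of injective objects of the exact category $\Ch(R)$; concretely these are the contractible complexes with injective components, equivalently the complexes $\bigoplus_{n} D^{n}(J_{n})$ with each $J_{n}$ an injective $R$-module, where $D^{n}(M)$ is the $n$-disk complex (two consecutive copies of $M$ linked by the identity). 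Recall that $\Ch(R)^{\perp}=\widetilde{\mathcal{I}}$ and that $\Ch(R)$ is a locally presentable abelian category with enough projectives and injectives.

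Set $\mathcal{W}_{\mathrm{dw}}={}^{\perp}(\mathrm{dw}(\mathcal{I}))$ and $\mathcal{W}_{\mathrm{ex}}={}^{\perp}(\mathrm{ex}(\mathcal{I}))$. The first step is to invoke \cite[Example 4.8]{Gil2016}, which supplies that $(\mathcal{W}_{\mathrm{dw}},\mathrm{dw}(\mathcal{I}))$ and $(\mathcal{W}_{\mathrm{ex}},\mathrm{ex}(\mathcal{I}))$ are complete and hereditary cotorsion pairs in $\Ch(R)$ (completeness ultimately resting on $\Ch(R)$ being locally presentable and the pairs being generated by sets, in the spirit of Lemma~\ref{T-1}) and that $\mathcal{W}_{\mathrm{dw}},\mathcal{W}_{\mathrm{ex}}$ are thick. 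Since here $\mathcal{C}=\Ch(R)$ is the whole category, the only thing then needed to make $\mathcal{M}_{\mathrm{dw}}=(\Ch(R),\mathcal{W}_{\mathrm{dw}},\mathrm{dw}(\mathcal{I}))$ and $\mathcal{M}_{\mathrm{ex}}=(\Ch(R),\mathcal{W}_{\mathrm{ex}},\mathrm{ex}(\mathcal{I}))$ into Hovey triples is the identity $\mathcal{W}_{\mathrm{dw}}\cap\mathrm{dw}(\mathcal{I})=\widetilde{\mathcal{I}}=\mathcal{W}_{\mathrm{ex}}\cap\mathrm{ex}(\mathcal{I})$ (forced, since the second cotorsion pair of such a triple must be $(\Ch(R),\Ch(R)^{\perp})$). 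I would check this by a short disk-complex argument: ``$\supseteq$'' holds because $\widetilde{\mathcal{I}}\subseteq\mathrm{dw}(\mathcal{I})$ and, writing $\widetilde{I}=\bigoplus_{n}D^{n}(J_{n})$, one has $\Ext^{1}_{\Ch(R)}(\widetilde{I},C)\cong\prod_{n}\Ext^{1}_{R}(J_{n},C_{?})=0$ for every $C\in\mathrm{dw}(\mathcal{I})$ because the components of $C$ are injective; for ``$\subseteq$'', an $X\in\mathcal{W}_{\mathrm{dw}}\cap\mathrm{dw}(\mathcal{I})$ embeds into $I(X):=\bigoplus_{n}D^{n}(X_{n})\in\widetilde{\mathcal{I}}$ with cokernel a shift $\Sigma X$, which again lies in $\mathcal{W}_{\mathrm{dw}}\cap\mathrm{dw}(\mathcal{I})$ (both classes are shift-stable), so $\Ext^{1}_{\Ch(R)}(\Sigma X,X)=0$, the conflation $X\rightarrowtail I(X)\twoheadrightarrow\Sigma X$ splits, and $X$ is a summand of $I(X)$. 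The same argument, now using that $I(X)$ is contractible, hence acyclic, so $I(X)\in\mathrm{dw}(\mathcal{I})\cap\mathbf{Acy}=\mathrm{ex}(\mathcal{I})$, handles the $\mathrm{ex}$ case.

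For the homotopy categories I would apply \cite[Theorem 6.21]{Sto201401} as recalled in Section~2. For $\mathcal{M}_{\mathrm{dw}}$ the associated Frobenius category is $\Ch(R)\cap\mathrm{dw}(\mathcal{I})=\mathrm{dw}(\mathcal{I})$, with class of projective-injective objects $\Ch(R)\cap\mathcal{W}_{\mathrm{dw}}\cap\mathrm{dw}(\mathcal{I})=\widetilde{\mathcal{I}}$, so $\Ho(\mathcal{M}_{\mathrm{dw}})\simeq\mathrm{dw}(\mathcal{I})/\widetilde{\mathcal{I}}$ as triangulated categories. It then remains only to observe that a chain map between complexes of injectives is null-homotopic if and only if it factors through an object of $\widetilde{\mathcal{I}}$: a null-homotopic map factors through the mapping cone $C(\mathrm{id}_{X})$ of the identity of its source, which for $X\in\mathrm{dw}(\mathcal{I})$ is a contractible complex of injectives, hence lies in $\widetilde{\mathcal{I}}$; conversely any map factoring through a contractible complex is null-homotopic. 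Therefore $\mathrm{dw}(\mathcal{I})/\widetilde{\mathcal{I}}=\K(\mathrm{dw}(\mathcal{I}))$, and verbatim the same computation with $\mathrm{ex}(\mathcal{I})$ in place of $\mathrm{dw}(\mathcal{I})$ gives $\mathrm{ex}(\mathcal{I})/\widetilde{\mathcal{I}}=\K(\mathrm{ex}(\mathcal{I}))$, so $\Ho(\mathcal{M}_{\mathrm{dw}})\simeq\K(\mathrm{dw}(\mathcal{I}))$ and $\Ho(\mathcal{M}_{\mathrm{ex}})\simeq\K(\mathrm{ex}(\mathcal{I}))$.

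The main obstacle is precisely the imported input flagged above: the completeness of $(\mathcal{W}_{\mathrm{dw}},\mathrm{dw}(\mathcal{I}))$ and $(\mathcal{W}_{\mathrm{ex}},\mathrm{ex}(\mathcal{I}))$ together with the thickness of $\mathcal{W}_{\mathrm{dw}},\mathcal{W}_{\mathrm{ex}}$ --- i.e.\ that these cotorsion-pair data really do underlie exact (equivalently abelian) model structures on $\Ch(R)$. Completeness one would obtain from Lemma~\ref{T-1} once the relevant left classes or their perpendiculars are exhibited as generated by a set, and thickness from the description of $\mathcal{W}_{\mathrm{dw}},\mathcal{W}_{\mathrm{ex}}$ as the classes of trivial objects of the respective model structures; but since all of this is exactly the content of \cite[Example 4.8]{Gil2016} (see also \cite{Bec2014}), the efficient route is simply to cite it, after which everything above is routine bookkeeping with Frobenius stable categories.
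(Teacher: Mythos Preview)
Your proposal is correct and matches the paper's treatment: the paper states this as a known \emph{Fact} and simply refers to \cite[Example 4.8]{Gil2016} without giving any argument, so your citation of the same source already reproduces the paper's ``proof''. The additional verifications you supply (the kernel identity $\mathcal{W}\cap\mathcal{F}=\widetilde{\mathcal{I}}$ via the cone-of-identity splitting, and the identification of the Frobenius stable category with the chain homotopy category) are accurate and go beyond what the paper records, but they are standard and entirely in the spirit of the cited reference.
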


Note that the cotorsion pair $(\Ch(R), \widetilde{\mathcal{I}})$
is generated by a set,
from Fact \ref{KR-0} and Corollary \ref{T-5},
we obtain the following lemma.

\begin{lem}\label{KR-i0}
	Let $R$ be a ring then, for any integer $n\geq 0$, there are hereditary Hovey triples
	$$\mathcal{M}_{\mathrm{dw},n} =
	(^{\perp}(\widetilde{I}_n), {}^{\perp}(\mathrm{dw}(\mathcal{I})), (\mathrm{dw}(\mathcal{I}))_n)
	\quad and \quad
	\mathcal{M}_{\mathrm{ex},n} =
	(^{\perp}(\widetilde{I}_n), {}^{\perp}(\mathrm{ex}(\mathcal{I})), (\mathrm{ex}(\mathcal{I}))_n)$$
	with the following triangle equivalences
	$$\Ho(\mathcal{M}_{\mathrm{dw},n})\simeq \K(\mathrm{dw}(\mathcal{I}))
	\quad and \quad
	\Ho(\mathcal{M}_{\mathrm{ex},n})\simeq \K(\mathrm{ex}(\mathcal{I})).$$
\end{lem}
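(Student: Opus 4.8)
The plan is to apply Corollary \ref{T-5} twice, once to each of the two hereditary Hovey triples recorded in Fact \ref{KR-0}. Recall that $\mathrm{Ch}(R)$ is a locally presentable exact (indeed abelian) category with cokernels and enough projectives and injectives, so the hypotheses of Corollary \ref{T-5} on the ambient category are met. Thus the only thing to verify before invoking Corollary \ref{T-5} is that the relevant cotorsion pair $(\mathcal{C}, \mathcal{W}\cap\mathcal{F})$ attached to each triple is generated by a set.

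For $\mathcal{M}_{\mathrm{dw}} = (\mathrm{Ch}(R), {}^{\perp}(\mathrm{dw}(\mathcal{I})), \mathrm{dw}(\mathcal{I}))$ the associated cotorsion pair of the form $(\mathcal{C}, \mathcal{W}\cap\mathcal{F})$ is $(\mathrm{Ch}(R), \mathrm{dw}(\mathcal{I}))$, because here $\mathcal{C}=\mathrm{Ch}(R)$ forces $\mathcal{W}\cap\mathcal{F} = \mathcal{C}^{\perp} = \mathrm{Ch}(R)^{\perp} = \widetilde{\mathcal{I}}$; that is, $\mathcal{W}\cap\mathcal{F}=\widetilde{\mathcal{I}}$, the class of (contractible) injective complexes. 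Similarly, for $\mathcal{M}_{\mathrm{ex}} = (\mathrm{Ch}(R), {}^{\perp}(\mathrm{ex}(\mathcal{I})), \mathrm{ex}(\mathcal{I}))$ one again has $\mathcal{C}=\mathrm{Ch}(R)$ and hence $\mathcal{W}\cap\mathcal{F}=\mathcal{C}^{\perp}=\widetilde{\mathcal{I}}$. So in both cases the cotorsion pair $(\mathcal{C}, \mathcal{W}\cap\mathcal{F})$ is exactly $(\mathrm{Ch}(R), \widetilde{\mathcal{I}})$, which is generated by a set as noted in the paragraph preceding the statement (this is the standard fact that $(\mathrm{Ch}(R),\widetilde{\mathcal{I}})$ is cogenerated by, equivalently generated by, a set; cf.\ the discussion around Lemma \ref{T-1} and Remark \ref{T}). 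Feeding this into Corollary \ref{T-5} with $\mathcal{M}=\mathcal{M}_{\mathrm{dw}}$ yields, for each $n\geq 0$, the hereditary Hovey triple $\mathcal{M}_{n}=({}^{\perp}((\mathcal{W}\cap\mathcal{F})_n), \mathcal{W}, \mathcal{F}_n)$ together with $\Ho(\mathcal{M})=\Ho(\mathcal{M}_n)$. Substituting $\mathcal{W}\cap\mathcal{F}=\widetilde{\mathcal{I}}$, $\mathcal{W}={}^{\perp}(\mathrm{dw}(\mathcal{I}))$, and $\mathcal{F}=\mathrm{dw}(\mathcal{I})$ gives precisely $\mathcal{M}_{\mathrm{dw},n} = ({}^{\perp}(\widetilde{I}_n), {}^{\perp}(\mathrm{dw}(\mathcal{I})), (\mathrm{dw}(\mathcal{I}))_n)$, and combining $\Ho(\mathcal{M}_{\mathrm{dw}})=\Ho(\mathcal{M}_{\mathrm{dw},n})$ with the equivalence $\Ho(\mathcal{M}_{\mathrm{dw}})\simeq \K(\mathrm{dw}(\mathcal{I}))$ from Fact \ref{KR-0} gives $\Ho(\mathcal{M}_{\mathrm{dw},n})\simeq \K(\mathrm{dw}(\mathcal{I}))$. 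The argument for $\mathcal{M}_{\mathrm{ex}}$ is word-for-word the same, producing $\mathcal{M}_{\mathrm{ex},n}$ and $\Ho(\mathcal{M}_{\mathrm{ex},n})\simeq \K(\mathrm{ex}(\mathcal{I}))$.

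This is essentially a bookkeeping argument: there is no genuine obstacle, only the need to identify the classes correctly. The one point that deserves care — and which I would single out as the ``main'' step — is the identification $\mathcal{W}\cap\mathcal{F}=\widetilde{\mathcal{I}}$ in each triple, i.e.\ checking that when the cofibrant class is all of $\mathrm{Ch}(R)$ the trivially fibrant class is forced to be the injective complexes, so that the generating-by-a-set hypothesis of Corollary \ref{T-5} is exactly the known fact about $(\mathrm{Ch}(R),\widetilde{\mathcal{I}})$. Once that is in place, everything else is a direct translation of the conclusion of Corollary \ref{T-5}, and the displayed triangle equivalences follow by composing with the equivalences already recorded in Fact \ref{KR-0}.
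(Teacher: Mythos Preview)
Your proposal is correct and follows exactly the paper's own approach: the paper simply notes that $(\Ch(R), \widetilde{\mathcal{I}})$ is generated by a set and then invokes Fact \ref{KR-0} together with Corollary \ref{T-5}. (One small slip: you first write the cotorsion pair $(\mathcal{C},\mathcal{W}\cap\mathcal{F})$ as $(\mathrm{Ch}(R), \mathrm{dw}(\mathcal{I}))$ before correctly computing $\mathcal{W}\cap\mathcal{F}=\widetilde{\mathcal{I}}$; the latter identification is the right one and is what the rest of your argument uses.)
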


Combining Example \ref{D}, Fact \ref{KR-0} and Lemma \ref{KR-i0},
we can interpret the aforementioned Krause's recollement
in terms of ``$n$-dimensional'' homotopy categories ($n \geq 0$).

\begin{prop}\label{KR-i}
	Let $R$ be a ring and $n\geq 0$ an integer. Keep the notations in Example \ref{D} and Lemma \ref{KR-i0}.
	Then we have a recollement of homotopy categories of hereditary Hovey triples:
	\[
	\xymatrixrowsep{5pc}
	\xymatrixcolsep{6pc}
	\xymatrix
	{\Ho(\mathcal{M}_{\mathrm{ex},n}) \ar[r] & \Ho(\mathcal{M}_{\mathrm{dw},n}) \ar@<-3ex>[l] \ar@<3ex>[l] \ar[r] & \Ho(\mathcal{M}_{\mathrm{dg}, n}). \ar@<-3ex>[l] \ar@<3ex>[l]
	} \]
\end{prop}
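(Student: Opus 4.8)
The plan is to reduce the statement entirely to the already-established recollement of Krause (valid for any ring by Becker--Gillespie) together with the triangle equivalences recorded in Example \ref{D}, Fact \ref{KR-0} and Lemma \ref{KR-i0}. The key observation is that the three ``$n$-dimensional'' homotopy categories appearing in the diagram are, by those results, triangle-equivalent to the three terms of Krause's recollement: Lemma \ref{KR-i0} gives $\Ho(\mathcal{M}_{\mathrm{ex},n}) \simeq \K(\mathrm{ex}(\mathcal{I}))$ and $\Ho(\mathcal{M}_{\mathrm{dw},n}) \simeq \K(\mathrm{dw}(\mathcal{I}))$, while Example \ref{D} gives $\Ho(\mathcal{M}_{\mathrm{dg},n}) \simeq \mathbf{D}(R)$. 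So at the level of objects the diagram is nothing but Krause's recollement transported along equivalences, and the only real content is to check that the transported functors assemble into an honest recollement (rather than just a sequence of adjoints between equivalent categories).

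First I would recall the precise data of Krause's recollement: the middle functor $\K(\mathrm{dw}(\mathcal{I})) \to \mathbf{D}(R)$ is induced by the canonical localization $\K(\mathrm{dw}(\mathcal{I})) \to \K(R)/\K_{\mathrm{ac}} \simeq \mathbf{D}(R)$ (Becker's/Gillespie's presentation realizes $\mathbf{D}(R)$ as $\Ho(\mathcal{M}_{\mathrm{dg}})$ with $\mathcal{M}_{\mathrm{dg}} = (\Ch(R), \mathbf{Acy}, \mathrm{dg}(\mathcal{I}))$), and the left-hand functor $\K(\mathrm{ex}(\mathcal{I})) \to \K(\mathrm{dw}(\mathcal{I}))$ is the inclusion; the four outer arrows are the corresponding adjoints. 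I would then note that by Theorem \ref{modle structure 1} (applied in the locally presentable abelian category $\Ch(R)$, which has enough projectives and injectives, to the right-extendable cotorsion pairs coming from the set-generated pair $(\Ch(R), \widetilde{\mathcal{I}})$), the homotopy categories are \emph{unchanged} when passing from $\mathcal{M}_{\mathrm{dg}}$, $\mathcal{M}_{\mathrm{dw}}$, $\mathcal{M}_{\mathrm{ex}}$ to their $n$-dimensional versions $\mathcal{M}_{\mathrm{dg},n}$, $\mathcal{M}_{\mathrm{dw},n}$, $\mathcal{M}_{\mathrm{ex},n}$: the relevant weak equivalence class $\mathcal{W}$ (namely $\mathbf{Acy}$, ${}^{\perp}(\mathrm{dw}(\mathcal{I}))$, ${}^{\perp}(\mathrm{ex}(\mathcal{I}))$ respectively) is preserved, so $\Ho(\mathcal{M}_{\bullet,n}) = \Ho(\mathcal{M}_{\bullet})$ on the nose, not merely up to equivalence. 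This is the crucial point that makes the diagram commute: since the homotopy categories literally coincide with the classical ones, Krause's recollement \emph{is} the asserted diagram, read through the identifications of Fact \ref{KR-0} and Example \ref{D}.

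Thus the argument is: (i) invoke Theorem \ref{modle structure 1} / Corollary \ref{T-5} to identify $\Ho(\mathcal{M}_{\mathrm{dg},n}) = \Ho(\mathcal{M}_{\mathrm{dg}}) \simeq \mathbf{D}(R)$, $\Ho(\mathcal{M}_{\mathrm{dw},n}) = \Ho(\mathcal{M}_{\mathrm{dw}}) \simeq \K(\mathrm{dw}(\mathcal{I}))$, and $\Ho(\mathcal{M}_{\mathrm{ex},n}) = \Ho(\mathcal{M}_{\mathrm{ex}}) \simeq \K(\mathrm{ex}(\mathcal{I}))$; (ii) transport Krause's recollement along these triangle equivalences. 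Since a recollement is preserved under composing each of its three terms with a triangle equivalence (the adjunctions and the exactness/orthogonality conditions are all equivalence-invariant), the result follows immediately. I expect the main obstacle to be purely expository: one must be careful to verify that the left and middle functors in the $n$-dimensional picture are the \emph{same} functors (localization, respectively inclusion) as in Krause's original recollement after the identifications, i.e.\ that the equivalences of Lemma \ref{KR-i0} and Example \ref{D} are compatible with the inclusion $\mathrm{ex}(\mathcal{I}) \hookrightarrow \mathrm{dw}(\mathcal{I})$ and with the canonical functor to $\mathbf{D}(R)$. This compatibility is built into the proof of Fact \ref{KR-0} and the equivalences $\Ho(\mathcal{M}) \simeq \underline{\mathcal{C}\cap\mathcal{F}}$ from \cite[Theorem 6.21]{Sto201401}, so once those identifications are spelled out the recollement transports formally, and no new homological input beyond Theorem \ref{modle structure 1} is needed.
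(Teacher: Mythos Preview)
Your proposal is correct and follows essentially the same approach as the paper: the paper simply states (in the sentence preceding the proposition) that the result follows by combining Example~\ref{D}, Fact~\ref{KR-0} and Lemma~\ref{KR-i0}, i.e.\ by identifying $\Ho(\mathcal{M}_{\mathrm{ex},n})$, $\Ho(\mathcal{M}_{\mathrm{dw},n})$, $\Ho(\mathcal{M}_{\mathrm{dg},n})$ with the three terms of Krause's recollement via the established triangle equivalences and then transporting the recollement. Your observation that the trivial classes $\mathcal{W}$ are unchanged under passage to the $n$-dimensional triples (so the homotopy categories literally coincide rather than being merely equivalent) makes explicit the point the paper leaves implicit.
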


More generally,
combining Theorem \ref{modle structure 1} and \cite[Theorem 3.4]{Gil16},
one can obtain the following

\begin{prop}\label{gKR-i}
	Let $\mathcal{A}$ be a WIC exact category with enough projectives and injectives.
	Suppose that we have three hereditary Hovey triples
	$$\mathcal{M}_1=(\mathcal{A}, \mathcal{W}_1, \mathcal{F}_1),\quad
	\mathcal{M}_2=(\mathcal{A}, \mathcal{W}_2, \mathcal{F}_2), \quad
	\mathcal{M}_3=(\mathcal{A}, \mathcal{W}_3, \mathcal{F}_3)  $$
	such that
	$$\mathcal{W}_3\cap \mathcal{F}_1 = \mathcal{F}_2 \ \ \text{and} \ \
	\mathcal{F}_3\subseteq \mathcal{F}_1 \quad(\text{or equivalently},
	\mathcal{W}_2\cap \mathcal{W}_3=\mathcal{W}_1 \ \ \text{and} \ \
	\mathcal{F}_2\subseteq \mathcal{W}_3).$$
	If the cotorsion pair $(\mathcal{A}, \mathcal{A}^{\perp})$
	is right extendable then, for any integer $n\geq 0$,
	we have hereditary Hovey triples
	$$\mathcal{M}_{i,n}
	=(^{\perp}(\mathcal{W}_i \cap \mathcal{F}_i)_n, \mathcal{W}_i, (\mathcal{F}_i)_n),
	\quad (i = 1, 2, 3)$$
	whose homotopy categories fit in a recollement:
	\[
	\xymatrixrowsep{5pc}
	\xymatrixcolsep{6pc}
	\xymatrix
	{\mathrm{Ho}(\mathcal{M}_{2,n}) \ar[r] & \mathrm{Ho}(\mathcal{M}_{1,n}) \ar@<-3ex>[l] \ar@<3ex>[l] \ar[r] &\mathrm{Ho}(\mathcal{M}_{3,n}).\ar@<-3ex>[l] \ar@<3ex>[l]
	} \]
\end{prop}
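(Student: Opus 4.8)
\textbf{Proof proposal for Proposition \ref{gKR-i}.}
The plan is to reduce the statement to an application of Theorem \ref{modle structure 1} together with the recollement criterion of \cite[Theorem 3.4]{Gil16}. First I would apply Theorem \ref{modle structure 1} to each of the three hereditary Hovey triples $\mathcal{M}_i = (\mathcal{A}, \mathcal{W}_i, \mathcal{F}_i)$, noting that in each case the ``$\mathcal{C}$-part'' is all of $\mathcal{A}$, so the relevant cotorsion pair $(\mathcal{C}, \mathcal{W}_i \cap \mathcal{F}_i) = (\mathcal{A}, \mathcal{A}^\perp \cap \text{(something)})$; more precisely $\mathcal{W}_i \cap \mathcal{F}_i = \mathcal{F}_i \cap \mathcal{W}_i$ is the kernel of $(\mathcal{A}, \mathcal{W}_i\cap\mathcal{F}_i)$, and since $\mathcal{A}^\perp = \mathcal{W}_i \cap \mathcal{F}_i$ for every $i$ (the right cotorsion pair of a Hovey triple with $\mathcal{C} = \mathcal{A}$ has right class $\mathcal{W}\cap\mathcal{F}$, and the left class is $\mathcal{A}$, forcing $\mathcal{A}^\perp = \mathcal{W}_i\cap\mathcal{F}_i$), the hypothesis that $(\mathcal{A}, \mathcal{A}^\perp)$ is right extendable is exactly the hypothesis needed to invoke Theorem \ref{modle structure 1} for each $i$. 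This yields the three hereditary Hovey triples $\mathcal{M}_{i,n} = ({}^\perp(\mathcal{W}_i\cap\mathcal{F}_i)_n, \mathcal{W}_i, (\mathcal{F}_i)_n)$ together with triangle equivalences $\mathrm{Ho}(\mathcal{M}_{i,n}) \simeq \mathrm{Ho}(\mathcal{M}_i)$.

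Next I would verify that the triples $\mathcal{M}_{1,n}, \mathcal{M}_{2,n}, \mathcal{M}_{3,n}$ satisfy the hypotheses of \cite[Theorem 3.4]{Gil16}, which (in the formulation producing a recollement from three Hovey triples on a common exact category) requires a compatibility condition on the weak equivalence classes and fibrant classes analogous to the one already imposed on $\mathcal{M}_1, \mathcal{M}_2, \mathcal{M}_3$, namely $\mathcal{W}_i$ unchanged and a relation like $(\mathcal{W}_3)\cap(\mathcal{F}_1)_n = (\mathcal{F}_2)_n$ and $(\mathcal{F}_3)_n \subseteq (\mathcal{F}_1)_n$. The containment $(\mathcal{F}_3)_n \subseteq (\mathcal{F}_1)_n$ is immediate from $\mathcal{F}_3 \subseteq \mathcal{F}_1$ and the definition of the dimension-$n$ enlargement (a resolution by objects of $\mathcal{F}_3$ is a resolution by objects of $\mathcal{F}_1$). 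For the equality $\mathcal{W}_3 \cap (\mathcal{F}_1)_n = (\mathcal{F}_2)_n$, I would argue ``$\supseteq$'' from $\mathcal{F}_2 \subseteq \mathcal{W}_3$ (hence $(\mathcal{F}_2)_n \subseteq \mathcal{W}_3$, using thickness of $\mathcal{W}_3$) together with $(\mathcal{F}_2)_n \subseteq (\mathcal{F}_1)_n$ (from $\mathcal{F}_2 = \mathcal{W}_3\cap\mathcal{F}_1 \subseteq \mathcal{F}_1$); and ``$\subseteq$'' by taking $M \in \mathcal{W}_3\cap(\mathcal{F}_1)_n$, building a coresolution $M \rightarrowtail L \twoheadrightarrow N$ with $L \in (\mathcal{F}_2)_n$, $N \in \mathcal{F}_1$ furnished by Theorem \ref{SStype_{1}} applied to $\mathcal{M}_1$ (its condition (4), using that $(\mathcal{A}, \mathcal{A}^\perp)$ is right extendable), then deducing $N \in \mathcal{W}_3$ by thickness of $\mathcal{W}_3$ (since $M, L \in \mathcal{W}_3$), so $N \in \mathcal{W}_3\cap\mathcal{F}_1 = \mathcal{F}_2$, whence the conflation exhibits $M$ as an object of $(\mathcal{F}_2)_n$ by Lemma \ref{06-2}.

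Having checked the compatibility, I would then feed $\mathcal{M}_{1,n}, \mathcal{M}_{2,n}, \mathcal{M}_{3,n}$ into \cite[Theorem 3.4]{Gil16} to obtain the recollement of their homotopy categories, which is exactly the displayed diagram. The main obstacle I anticipate is the second step: pinning down precisely which form of the compatibility condition \cite[Theorem 3.4]{Gil16} demands and confirming that the dimension-shifted classes inherit it. In particular the equality $\mathcal{W}_3\cap(\mathcal{F}_1)_n = (\mathcal{F}_2)_n$ is the delicate point, and its proof genuinely uses the structural input of Theorem \ref{SStype_{1}} (the ``fibrant approximation'' with controlled cokernel) rather than formal manipulation of dimensions; the thickness of $\mathcal{W}_3$ and Lemma \ref{06-2} then close the argument. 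Everything else—the extendability bookkeeping and the transport of triangle equivalences—is routine given Theorem \ref{modle structure 1}.
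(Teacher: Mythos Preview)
Your plan cites exactly the two ingredients the paper uses—Theorem \ref{modle structure 1} and \cite[Theorem 3.4]{Gil16}—but you take a longer route than necessary, and that longer route has a genuine obstacle.

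The paper's (implicit) argument is simply this: the stated compatibility conditions on $\mathcal{M}_1,\mathcal{M}_2,\mathcal{M}_3$ are \emph{verbatim} the hypotheses of \cite[Theorem 3.4]{Gil16}, so that theorem applied to the original (injective) triples already yields a recollement of $\mathrm{Ho}(\mathcal{M}_1),\mathrm{Ho}(\mathcal{M}_2),\mathrm{Ho}(\mathcal{M}_3)$. Theorem \ref{modle structure 1} then gives $\mathrm{Ho}(\mathcal{M}_{i,n})=\mathrm{Ho}(\mathcal{M}_i)$—an equality, not merely an equivalence, because the class $\mathcal{W}_i$ of trivial objects (hence the class of weak equivalences) is unchanged. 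The recollement therefore transports for free; no compatibility among the $\mathcal{M}_{i,n}$ needs to be checked.

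Your approach instead tries to feed $\mathcal{M}_{1,n},\mathcal{M}_{2,n},\mathcal{M}_{3,n}$ directly into \cite[Theorem 3.4]{Gil16}. The problem is that that theorem is formulated for \emph{injective} Hovey triples (cofibrant class equal to all of $\mathcal{A}$), whereas your new triples have cofibrant class ${}^{\perp}\mathcal{I}_n\subsetneq\mathcal{A}$ in general. So even after you verify $\mathcal{W}_3\cap(\mathcal{F}_1)_n=(\mathcal{F}_2)_n$ and $(\mathcal{F}_3)_n\subseteq(\mathcal{F}_1)_n$, you would still need a more general recollement criterion (for triples sharing a common cofibrant class) rather than \cite[Theorem 3.4]{Gil16} itself. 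Incidentally, in your verification the conflation furnished by Theorem \ref{SStype_{1}}(4) runs $N\rightarrowtail L\twoheadrightarrow M$ with $L\in(\mathcal{W}_1\cap\mathcal{F}_1)_n=\mathcal{I}_n$ and $N\in\mathcal{F}_1$, not $M\rightarrowtail L\twoheadrightarrow N$ with $L\in(\mathcal{F}_2)_n$; the rest of your thickness argument then goes through once this is corrected (using $\mathcal{I}_n\subseteq(\mathcal{F}_2)_n\subseteq\mathcal{W}_3$), but all of this extra work is avoidable via the equality $\mathrm{Ho}(\mathcal{M}_{i,n})=\mathrm{Ho}(\mathcal{M}_i)$.
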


To avoid being repetitious, we left the dual statement of Proposition \ref{gKR-i} to the reader.

\subsection {Model structures in the representation category of a quiver}	

A quiver $Q$ is actually a directed graph with a set $Q_0$ of vertexes
and a set $Q_1$ of arrows.
For an arrow $a \in Q_1$,
we always write $s(a)$ for its source and $t(a)$ for its target.
For a vertex $i \in Q_0$,
we set $Q^{i\to *} = \{a\in Q_1~|~s(a)=i\}$
and $Q_1^{*\to i} = \{a\in Q_1~|~t(a)=i\}$.
According to \cite[Proposition 3.6]{E-T2004},
we shall say a quiver is \emph{left rooted}
if it has no infinite sequence of arrows of the form
$\cdots \to \bullet \to \bullet \to \bullet$.
A \emph{right rooted} quiver is defined dually.

Let $\mathcal{A}$ be an abelian category and $Q$ be a quiver.
We denote by Rep$(Q,\mathcal{A})$ the category of
all representations of $Q$, which is an abelian category.
Here a \emph{representation} $X$ of $Q$ is a covariant functor $X: Q\to \mathcal{A}$. And a morphism $X\to Y$ of representations is a natural transformation, that is, a family $\{f(i): X(i)\to Y(i)\}_{i\in Q_0}$ such that the following diagram is commutative for each arrow $a:i\to j$ in $Q_1$:
$$\xymatrix{
	X(i) \ar[d]_{f(i)} \ar[r]^{X(a)}
	& X(j) \ar[d]^{f(j)}  \\
	Y(i)  \ar[r]_{Y(a)}
	& Y(j)             }
$$

Recall that an abelian category $\mathcal{A}$ satisfies the axiom AB3 provided that it has small coproducts. It satisfies AB4 provided that it satisfies AB3 and any coproduct of monomorphisms is a monomorphism.
Dually, one has the axioms AB3$^{*}$ and AB4$^{*}$.

\begin{lem} \label{Rep-1} \cite[Corollary 3.10]{HJ2019}
	Let $\mathcal{A}$ be any abelian category and let $Q$ be any quiver.
	
	\emph{(1)}	Assume that $\mathcal{A}$ satisfies {\rm AB3}. If $\mathcal{A}$ has enough projectives, then so does \Rep$(Q,\mathcal{A})$.
	
	\emph{(2)}	Assume that $\mathcal{A}$ satisfies {\rm AB3$^{*}$}. If $\mathcal{A}$ has enough injectives, then so does \Rep$(Q,\mathcal{A})$.
	
\end{lem}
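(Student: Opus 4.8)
The plan is to reduce both parts to formal properties of the vertex-evaluation functors and their adjoints. For each vertex $i\in Q_0$, let $e_i\colon \Rep(Q,\mathcal{A})\to\mathcal{A}$ be the functor $X\mapsto X(i)$. The first step is to record that kernels, cokernels, coproducts and products in $\Rep(Q,\mathcal{A})$ are all computed vertexwise, so that every $e_i$ is exact and, moreover, a morphism in $\Rep(Q,\mathcal{A})$ is an epimorphism (resp.\ monomorphism) precisely when each $e_i$ sends it to one in $\mathcal{A}$.

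For part (1), assuming AB3, the next step is to construct for each $i$ a left adjoint $f_i\colon\mathcal{A}\to\Rep(Q,\mathcal{A})$ of $e_i$. Explicitly, $(f_iM)(j)$ is the coproduct of copies of $M$ indexed by the set of paths $i\to j$ in $Q$ (this coproduct exists by AB3), with the map attached to an arrow $a\colon j\to j'$ given by reindexing the summand of a path $p$ to the summand of the path $ap$. The one genuinely computational point is the adjunction isomorphism $\Hom_{\Rep(Q,\mathcal{A})}(f_iM,X)\cong\Hom_{\mathcal{A}}(M,X(i))$: a morphism out of $f_iM$ is freely determined by its restriction to the summand of $(f_iM)(i)$ indexed by the trivial path at $i$, every other component being forced by naturality. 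Granting this, exactness of $e_i$ shows that $\Hom_{\Rep(Q,\mathcal{A})}(f_iP,-)\cong\Hom_{\mathcal{A}}(P,e_i(-))$ is exact whenever $P$ is projective, so $f_i$ carries projectives to projectives.

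To conclude part (1): given $X\in\Rep(Q,\mathcal{A})$, use that $\mathcal{A}$ has enough projectives to choose epimorphisms $P_i\twoheadrightarrow X(i)$ with each $P_i$ projective, pass to the adjoint morphisms $f_iP_i\to X$, and assemble them into $\varphi\colon\coprod_{i\in Q_0}f_iP_i\to X$ (using AB3 once more for the coproduct over the set $Q_0$). The source is projective, being a coproduct of projectives. That $\varphi$ is an epimorphism may be checked after each $e_j$ by the vertexwise criterion above; but $(\coprod_i f_iP_i)(j)$ contains, as the direct summand indexed by $i=j$ and the trivial path at $j$, a copy of $P_j$ on which $\varphi$ restricts to the chosen epimorphism $P_j\twoheadrightarrow X(j)$, so $e_j(\varphi)$ is an epimorphism. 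Part (2) is the formal dual: AB3$^{*}$ provides a right adjoint $g_i$ of $e_i$ with $(g_iM)(j)$ the product of copies of $M$ indexed by the paths $j\to i$; this $g_i$ preserves injectives since $e_i$ is exact; and for $X$ one builds a monomorphism $X\hookrightarrow\prod_{i\in Q_0}g_iI_i$ out of chosen monomorphisms $X(i)\hookrightarrow I_i$ into injectives, checking it is mono vertexwise through the trivial-path factor. The only real obstacle in the whole argument is the bookkeeping behind the adjunctions $f_i\dashv e_i$ and $e_i\dashv g_i$; note in particular that no finiteness condition on $Q$ is required, since AB3 / AB3$^{*}$ already supply all the small (co)products that appear.
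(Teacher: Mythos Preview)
Your argument is correct and is essentially the standard one: build the left adjoint $f_i$ of the exact evaluation functor $e_i$ using AB3, observe that left adjoints of exact functors preserve projectives, and cover any $X$ by $\coprod_{i\in Q_0} f_iP_i$ via the counit maps. The dual handles part (2). The one place worth tightening is the verification of the adjunction $\Hom_{\Rep(Q,\mathcal{A})}(f_iM,X)\cong\Hom_{\mathcal{A}}(M,X(i))$: your ``forced by naturality'' remark is right, but you should note explicitly that the set of paths $i\to j$ in $Q$ is indeed a set (since $Q_0$ and $Q_1$ are sets), so that AB3 suffices to form $(f_iM)(j)$.

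As for comparison with the paper: the paper does not prove this lemma at all---it is simply quoted from \cite[Corollary 3.10]{HJ2019}. Your proof is exactly the construction Holm and J{\o}rgensen carry out there (their functors $f_i$ and $g_i$ are denoted $f_i(-)$ and $g_i(-)$ and built in the same path-indexed way), so your approach coincides with the cited source.
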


For an abelian category $\mathcal{A}$ and a representation
$X\in$ Rep$(Q,\mathcal{A})$,
using the universal property of coproducts (resp., products),
there is a unique morphism
$$\varphi_i^{X}: \underset{a\in Q_1^{^{*\to i}}}\coprod X(s(a))\to X(i)
\quad (\mathrm{resp.}, \psi_i^{X}: X(i) \to\underset{a\in Q_1^{^{i\to *}}}\prod X(t(a)))$$
for each $i\in Q_0$.
We denote by C$_i(X)$ and K$_i(X)$ the cokernel of $\varphi_i^{X}$
and kernel of $\psi_i^{X}$, respectively.
For any subcategory $\mathcal{X}$ of $\mathcal{A}$, we put
\begin{center}$\begin{aligned}&\bullet~ \text{Rep}(Q, \mathcal{X})=\{X\in \text{Rep}(Q, \mathcal{A})~|~X(i)\in \mathcal{X}, \forall i\in Q_0\},\\
		&\bullet~ \Phi(\mathcal{X})=\{X\in \text{Rep}(Q, \mathcal{A})~|~\varphi_i^{X}\ \text{is a monomorphism,}\ \mathrm{C}_i(X) \in \mathcal{X}, \forall i\in Q_0 \}, \text{and}\\
		&\bullet~ \Psi(\mathcal{X})=\{X\in \text{Rep}(Q, \mathcal{A})~|~\psi_i^{X}\ \text{is a epimorphism,}\ \mathrm{K}_i(X)~\in \mathcal{X}, \forall i\in Q_0 \}.
	\end{aligned}$ \end{center}

\begin{lem} \label{Rep-2} \cite[Corollary 3.5]{D-X2024}
	Let $\mathcal{A}$ be an abelian category, and let $(\mathcal{X}, \mathcal{Y})$ be a complete cotorsion pair in $\mathcal{A}$.
	
	\emph{(1)}	Suppose that $Q$ is a left rooted quiver and $\mathcal{A}$ satisfies {\rm AB4}. Then $(\Phi(\mathcal{X})$, $\Rep(Q, \mathcal{Y}))$ is a complete cotorsion pair in \Rep$(Q,\mathcal{A})$.
	
	\emph{(2)}	Suppose that $Q$ is a right rooted quiver and $\mathcal{A}$ satisfies {\rm AB4$^{*}$}. Then $(\Rep(Q, \mathcal{X}), \Psi(\mathcal{Y}))$ is a complete cotorsion pair in \Rep$(Q,\mathcal{A})$.
	
\end{lem}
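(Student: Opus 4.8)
The plan is to deduce (2) from (1) by duality, and to prove (1) by combining an $\Ext$-computation in $\Rep(Q,\mathcal{A})$ with a transfinite construction along the rooted filtration of $Q_0$. For the reduction, passing to the opposite category carries the complete cotorsion pair $(\mathcal{X},\mathcal{Y})$ in $\mathcal{A}$ to the complete cotorsion pair $(\mathcal{Y},\mathcal{X})$ in $\mathcal{A}^{\mathrm{op}}$, the axiom $\mathrm{AB4}^{*}$ to $\mathrm{AB4}$, the right rooted quiver $Q$ to the left rooted quiver $Q^{\mathrm{op}}$, and identifies $\Rep(Q,\mathcal{A})^{\mathrm{op}}$ with $\Rep(Q^{\mathrm{op}},\mathcal{A}^{\mathrm{op}})$ in a way carrying $\Psi(\mathcal{Y})$ to $\Phi(\mathcal{Y})$ and $\Rep(Q,\mathcal{X})$ to $\Rep(Q^{\mathrm{op}},\mathcal{X})$. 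Hence (2) is precisely (1) read inside $\mathcal{A}^{\mathrm{op}}$, and it suffices to prove (1).

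So assume $Q$ is left rooted and $\mathcal{A}$ satisfies $\mathrm{AB4}$. For each vertex $i$ the evaluation functor $e_i\colon\Rep(Q,\mathcal{A})\to\mathcal{A}$, $X\mapsto X(i)$, is exact and has a left adjoint $f_i$; since $\mathcal{A}$ satisfies $\mathrm{AB4}$, $f_i$ is exact, and using only the adjunction $\Hom_{\Rep(Q,\mathcal{A})}(f_i(M),-)\cong\Hom_{\mathcal{A}}(M,(-)(i))$ one checks that a short exact sequence $0\to Y\to E\to f_i(M)\to 0$ splits as soon as $0\to Y(i)\to E(i)\to M\to 0$ splits, whence $\Ext^{1}_{\Rep(Q,\mathcal{A})}(f_i(M),Y)\cong\Ext^{1}_{\mathcal{A}}(M,Y(i))$. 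From this I would extract two facts. First, a direct computation of the maps $\varphi_u^{f_i(X)}$ and their cokernels $\mathrm{C}_u(f_i(X))$, using that $\mathcal{X}={}^{\perp}\mathcal{Y}$ is closed under coproducts in an $\mathrm{AB4}$ abelian category, shows $f_i(X)\in\Phi(\mathcal{X})$ for all $X\in\mathcal{X}$; testing an arbitrary $Y\in\Phi(\mathcal{X})^{\perp}$ against these objects then gives $Y(i)\in\mathcal{X}^{\perp}=\mathcal{Y}$, i.e. $\Phi(\mathcal{X})^{\perp}\subseteq\Rep(Q,\mathcal{Y})$. Second, left-rootedness supplies the ordinal-indexed filtration $\emptyset=W_0\subseteq W_1\subseteq\cdots$ of $Q_0$, where $W_{\alpha+1}$ consists of the vertices all of whose incoming arrows start in $W_\alpha$, and a transfinite induction along it shows that every $F\in\Phi(\mathcal{X})$ is filtered by representations of the form $f_i(X)$ with $i\in Q_0$ and $X\in\mathcal{X}$. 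Combining this with the $\Ext$-isomorphism and Eklof's lemma in $\Rep(Q,\mathcal{A})$ (which inherits $\mathrm{AB4}$ from $\mathcal{A}$, so Eklof's lemma is valid there for well-ordered filtrations) yields $\Ext^{1}_{\Rep(Q,\mathcal{A})}(\Phi(\mathcal{X}),\Rep(Q,\mathcal{Y}))=0$, so $\Phi(\mathcal{X})\subseteq{}^{\perp}\Rep(Q,\mathcal{Y})$ and $\Rep(Q,\mathcal{Y})\subseteq\Phi(\mathcal{X})^{\perp}$.

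The remaining and hardest point is completeness. For $M\in\Rep(Q,\mathcal{A})$ I would construct, by transfinite recursion along the same filtration $W_0\subseteq W_1\subseteq\cdots$, a short exact sequence $0\to Y\to G\to M\to 0$ with $G\in\Phi(\mathcal{X})$ and $Y\in\Rep(Q,\mathcal{Y})$: once the vertices of $W_\alpha$ have been treated, each new vertex $i\in W_{\alpha+1}\setminus W_\alpha$ receives all of its incoming arrows from already-treated vertices, so there is a canonical morphism $\coprod_{a\in Q_1^{*\to i}}G(s(a))\to M(i)$; applying completeness of $(\mathcal{X},\mathcal{Y})$ in $\mathcal{A}$ to a suitable pushout/cokernel object built from this map produces $G(i)$ together with structure maps making $\varphi_i^{G}$ a monomorphism with cokernel in $\mathcal{X}$, while keeping $G\twoheadrightarrow M$ epic with kernel evaluating into $\mathcal{Y}$. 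The axiom $\mathrm{AB4}$ is used here precisely to ensure that coproducts of the monomorphisms $\varphi_i^{G}$ remain monomorphisms. The careful bookkeeping of the structure maps and the verification that the resulting global kernel lands in $\Rep(Q,\mathcal{Y})$ is the technical heart of the argument---this is the content of \cite[Corollary 3.5]{D-X2024}---and it is the step I expect to require the most work; the dual sequence $0\to M\to Y'\to G'\to 0$ is then obtained by a symmetric construction.

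Finally, once $\Ext^{1}_{\Rep(Q,\mathcal{A})}(\Phi(\mathcal{X}),\Rep(Q,\mathcal{Y}))=0$, the inclusion $\Phi(\mathcal{X})^{\perp}\subseteq\Rep(Q,\mathcal{Y})$, and completeness are established, the missing inclusion ${}^{\perp}\Rep(Q,\mathcal{Y})\subseteq\Phi(\mathcal{X})$ is formal: for $F\in{}^{\perp}\Rep(Q,\mathcal{Y})$ a special $\Phi(\mathcal{X})$-precover $0\to Y\to G\to F\to 0$ has $Y\in\Phi(\mathcal{X})^{\perp}=\Rep(Q,\mathcal{Y})$, so the sequence splits and $F$ is a direct summand of $G\in\Phi(\mathcal{X})$; since $\Phi(\mathcal{X})$ is closed under direct summands (because $\mathcal{X}$ is), $F\in\Phi(\mathcal{X})$. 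This completes the proof of (1), and hence, by the first paragraph, of (2).
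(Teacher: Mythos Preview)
The paper does not give its own proof of this lemma: it is stated with the citation \cite[Corollary 3.5]{D-X2024} and used as a black box, so there is no argument in the paper to compare against. Your outline is a reasonable reconstruction of the standard route to such results (adjoint pairs $f_i\dashv e_i$, the $\Ext$-isomorphism $\Ext^{1}_{\Rep(Q,\mathcal{A})}(f_i(M),Y)\cong\Ext^{1}_{\mathcal{A}}(M,Y(i))$ for left rooted $Q$, the filtration of objects of $\Phi(\mathcal{X})$ along the transfinite vertex filtration $W_\alpha$, Eklof's lemma, and a transfinite completeness construction), and the duality reduction of (2) to (1) is correct.

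Two places deserve more care if you actually want a self-contained proof rather than an outline. First, the claim that every $F\in\Phi(\mathcal{X})$ is filtered by objects $f_i(X)$ is true but not just ``transfinite induction along $W_\alpha$'': one must build subrepresentations $F_\alpha\subseteq F$ with $F_{\alpha+1}/F_\alpha\cong\coprod_{i\in W_{\alpha+1}\setminus W_\alpha} f_i(\mathrm{C}_i(F))$, and verifying that these are genuine subrepresentations (not just vertex-wise) uses precisely that all arrows into $i\in W_{\alpha+1}$ originate in $W_\alpha$ together with the injectivity of $\varphi_i^{F}$. Second, your remark that the special preenvelope ``is then obtained by a symmetric construction'' is imprecise: the construction is not literally symmetric because $Q$ is left rooted, not right rooted; one instead uses the other half of completeness of $(\mathcal{X},\mathcal{Y})$ in $\mathcal{A}$ at each step (or Salce's trick once enough injectives in $\Rep(Q,\mathcal{A})$ are available). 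You yourself flag the completeness construction as ``the technical heart'' and defer to \cite{D-X2024}; that is accurate, and it is exactly why the present paper simply cites the result.
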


\begin{lem} \label{Rep-3}
	Let $\mathcal{A}$ be an abelian category satisfying {\rm AB4},
	$Q$ a left rooted quiver and
	$(\mathcal{X}, \mathcal{Y})$ a complete cotorsion pair in $\mathcal{A}$.
	If $(\mathcal{X}, \mathcal{Y})$ is left extendable,
	then so is $(\Phi(\mathcal{X}), \Phi(\mathcal{X})^{\perp})$.
\end{lem}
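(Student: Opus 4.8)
The plan is to show that for each $n\ge 0$ the pair $\bigl(\Phi(\mathcal{X})_n,\,\Phi(\mathcal{X})_n^{\perp}\bigr)$ is a complete cotorsion pair in $\Rep(Q,\mathcal{A})$, using the hypothesis that $(\mathcal{X}_n,\mathcal{X}_n^{\perp})$ is a complete cotorsion pair in $\mathcal{A}$ for every $n$. The natural first step is to identify $\Phi(\mathcal{X})_n$ explicitly: I expect that $\Phi(\mathcal{X})_n = \Phi(\mathcal{X}_n)$, i.e.\ an object of $\Rep(Q,\mathcal{A})$ has $\Phi(\mathcal{X})$-projective dimension at most $n$ precisely when all the cokernels $\mathrm{C}_i(X)$ lie in $\mathcal{X}_n$ and the maps $\varphi_i^X$ are monomorphisms. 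First I would prove this by induction on $n$, using the fact (from \cite[Corollary 3.5]{D-X2024}, i.e.\ Lemma \ref{Rep-2}(1)) that $(\Phi(\mathcal{X}),\Rep(Q,\mathcal{Y}))$ is a complete cotorsion pair, together with the behaviour of the functors $\mathrm{C}_i$ with respect to conflations. Concretely, a resolution $X_n\rightarrowtail\cdots\to X_0\twoheadrightarrow M$ with each $X_j\in\Phi(\mathcal{X})$ induces, after applying the (right exact, on this subcategory) cokernel functors $\mathrm{C}_i$, a resolution of $\mathrm{C}_i(M)$ by objects of $\mathcal{X}$; conversely one lifts an $\mathcal{X}_n$-resolution of each $\mathrm{C}_i(M)$ back up to $\Rep(Q,\mathcal{A})$ using the pushout/pullback technique standard for rooted quivers (transfinite induction along the rooted order, as in \cite{E-T2004, HJ2019}). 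Here the AB4 hypothesis is what guarantees that the relevant coproducts of monomorphisms stay monic, so that the $\varphi_i$ remain inflations through the construction.

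Granting the identification $\Phi(\mathcal{X})_n=\Phi(\mathcal{X}_n)$, the conclusion follows quickly: since $(\mathcal{X},\mathcal{Y})$ is left extendable, $(\mathcal{X}_n,\mathcal{X}_n^{\perp})$ is a complete cotorsion pair in $\mathcal{A}$, and $\mathcal{A}$ still satisfies AB4 and $Q$ is still left rooted, so Lemma \ref{Rep-2}(1) applied to the cotorsion pair $(\mathcal{X}_n,\mathcal{X}_n^{\perp})$ produces a complete cotorsion pair $\bigl(\Phi(\mathcal{X}_n),\,\Rep(Q,\mathcal{X}_n^{\perp})\bigr)$ in $\Rep(Q,\mathcal{A})$. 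It remains only to check that the right-hand class matches, i.e.\ that $\Phi(\mathcal{X})_n^{\perp}=\Rep(Q,\mathcal{X}_n^{\perp})$; but $\Phi(\mathcal{X}_n)=\Phi(\mathcal{X})_n$ is a class of objects whose orthogonal is computed by the same lemma, so $\Phi(\mathcal{X})_n^{\perp}=\Phi(\mathcal{X}_n)^{\perp}=\Rep(Q,\mathcal{X}_n^{\perp})$, and $\bigl(\Phi(\mathcal{X})_n,\Phi(\mathcal{X})_n^{\perp}\bigr)$ is exactly the complete cotorsion pair just produced. Since $n$ was arbitrary, $(\Phi(\mathcal{X}),\Phi(\mathcal{X})^{\perp})$ is left extendable by Definition \ref{D-1}.

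The main obstacle is the first step, the identification $\Phi(\mathcal{X})_n=\Phi(\mathcal{X}_n)$. The direction $\Phi(\mathcal{X}_n)\subseteq\Phi(\mathcal{X})_n$ needs a lifting argument: starting from $\mathcal{X}$-resolutions of each vertex cokernel of length $\le n$, one must assemble a genuine $\Phi(\mathcal{X})$-resolution of $M$ in $\Rep(Q,\mathcal{A})$, which requires the transfinite pushout construction along the rooted structure and care that the syzygies at each stage again lie in $\Phi$ of the appropriate syzygy class — this is where I expect the bookkeeping to be heaviest, and where the WIC/AB4 hypotheses and the hereditariness implicit in left extendability (via Remark \ref{T-2-1}) get used to keep the kernels in the right place. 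The reverse inclusion is comparatively routine: apply $\mathrm{C}_i$ to a $\Phi(\mathcal{X})$-resolution and invoke Lemma \ref{06-1} together with the exactness properties of $\mathrm{C}_i$ on $\Phi(\mathcal{X})$. It may in fact be cleaner to bypass the explicit dimension identification and instead verify directly, via Lemma \ref{06-1}(4), that $\Phi(\mathcal{X})_n$ has the same $\Ext$-orthogonal as $\Phi(\mathcal{X}_n)$, using the adjunction/stalk computations of $\Ext$ in $\Rep(Q,\mathcal{A})$ from \cite{HJ2019, D-X2024}; I would attempt that route first and fall back on the resolution argument if the orthogonality computation proves awkward.
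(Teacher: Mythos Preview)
Your overall strategy matches the paper's exactly: prove $\Phi(\mathcal{X})_n=\Phi(\mathcal{X}_n)$ and then invoke Lemma~\ref{Rep-2}(1) for the pair $(\mathcal{X}_n,\mathcal{X}_n^{\perp})$. The direction $\Phi(\mathcal{X})_n\subseteq\Phi(\mathcal{X}_n)$ is handled in the paper just as you suggest, by pushing a $\Phi(\mathcal{X})$-resolution through the vertex-wise diagrams and reading off that each $\varphi_i^{X}$ is monic and each $\mathrm{C}_i(X)\in\mathcal{X}_n$.

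Where you diverge is the inclusion $\Phi(\mathcal{X}_n)\subseteq\Phi(\mathcal{X})_n$. You propose assembling a global $\Phi(\mathcal{X})$-resolution from vertex-wise $\mathcal{X}$-resolutions by transfinite pushouts along the rooted order; this is workable but considerably heavier than what the paper does. The paper avoids any lifting construction: given $X\in\Phi(\mathcal{X}_n)$, it simply takes a single approximation $0\to K_0\to B_0\to X\to 0$ with $B_0\in\Phi(\mathcal{X})$ using the already-known completeness of $(\Phi(\mathcal{X}),\Rep(Q,\mathcal{Y}))$, and then checks vertex by vertex (source and non-source cases, via the Five Lemma and Lemma~\ref{06-2}(1)) that $K_0\in\Phi(\mathcal{X}_{n-1})$; induction finishes. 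So the ``heaviest bookkeeping'' you anticipate is unnecessary --- the global approximation comes for free from Lemma~\ref{Rep-2}(1), and only the kernel needs vertex-wise analysis. Your fallback idea via $\Ext$-orthogonals is also more elaborate than needed.
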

\begin{proof}
	We will show that $\Phi(\mathcal{X}_n) = \Phi(\mathcal{X})_n$, which implies that $(\Phi(\mathcal{X}), \Phi(\mathcal{X})^{\perp})$ is left extendable by Definition \ref{D-1} and Lemma \ref{Rep-2}.
	The case when $n = 0$ is clear.
	Now assume $n \geq 1$.
	
	For any $X \in \Phi(\mathcal{X})_n$,
	we can take an exact sequence
	$$0 \to B_n \to B_{n-1} \to \cdots \to B_0 \to X \to 0$$
	where $B_l \in \Phi(\mathcal{X})$, $0 \leq l \leq n$.
	There are two cases:
	
	Case 1: for a source $i$,
	we have $\underset{a\in Q_1^{^{*\to i}}}\coprod Y(s(a)) = 0$
	for any $Y \in {\rm Rep}(Q,\mathcal{A})$.
	Then we have the following commutative diagram
	\begin{displaymath}
		\xymatrix{
			&0 \ar[d] & &0 \ar[d] &0 \ar[d] & \\
			0 \ar[r]& \underset{a\in Q_1^{^{*\to i}}}\coprod B_n(s(a)) \ar[d] \ar[r] & \cdots \ar[r] & \underset{a\in Q_1^{^{*\to i}}}\coprod B_0(s(a)) \ar[d] \ar[r] & \underset{a\in Q_1^{^{*\to i}}}\coprod X(s(a)) \ar[d]^{\varphi_i^{X}} \ar[r] & 0 \\
			0 \ar[r]& B_n(i)  \ar[d] \ar[r] & \cdots \ar[r] &B_0(i)  \ar[d] \ar[r] &X(i)  \ar[d] \ar[r] &0 \\
			0 \ar[r]& B_n(i)  \ar[d] \ar[r] & \cdots \ar[r] &B_0(i)  \ar[d] \ar[r] &X(i)  \ar[d] \ar[r] &0 \\
			&0  & &0  &0  & \\
		}
	\end{displaymath}
Since $\underset{a\in Q_1^{^{*\to i}}}\coprod Y(s(a)) = 0$
	for any $Y \in {\rm Rep}(Q,\mathcal{A})$
	and $B_l \in \Phi(\mathcal{X}), 0 \leq l \leq n$,
	then $\varphi_i^{X}$ is a monomorphism and $\mathrm{C}_i(X) = X(i) \in \mathcal{X}_n$.
	
	Case 2: for a vertex $i$ which is not a source,
	we have the following commutative diagram
	\begin{displaymath}
		\xymatrix{
			&0 \ar[d] & &0 \ar[d] & & \\
			0 \ar[r]& \underset{a\in Q_1^{^{*\to i}}}\coprod B_n(s(a)) \ar[d] \ar[r] & \cdots \ar[r] & \underset{a\in Q_1^{^{*\to i}}}\coprod B_0(s(a)) \ar[d] \ar[r] & \underset{a\in Q_1^{^{*\to i}}}\coprod X(s(a)) \ar[d]^{\varphi_i^{X}} \ar[r] & 0 \\
			0 \ar[r]& B_n(i)  \ar[d] \ar[r] & \cdots \ar[r] &B_0(i)  \ar[d] \ar[r] &X(i)  \ar[d] \ar[r] &0 \\
			0 \ar[r]& C_n(i)  \ar[d] \ar[r] & \cdots \ar[r] &C_0(i)  \ar[d] \ar[r] &C(i)  \ar[d] \ar[r] &0 \\
			&0  & &0  &0  & \\
		}
	\end{displaymath}
from which we can see that $\varphi_i^{X}$ is a monomorphism
	by the Five lemma.
	Since $B_l \in \Phi(\mathcal{X})$, $0 \leq l \leq n$,
	then $C_l(i) \in \mathcal{X}$ and hence
	$\mathrm{C}_i(X) = C(i) \in \mathcal{X}_n$.
	Thus we have $\Phi(\mathcal{X})_n \subseteq \Phi(\mathcal{X}_n)$.
	
	On the other hand, we will show that
	$\Phi(\mathcal{X}_n) \subseteq \Phi(\mathcal{X})_n$ by induction on $n$.
	Assume $n \geq 1$ and $\Phi(\mathcal{X}_{n-1}) \subseteq \Phi(\mathcal{X})_{n-1}$.
	If $X \in \Phi(\mathcal{X}_n)$,
	by Lemma \ref{Rep-2}(1),
	we can construct an exact sequence
	$$0 \to B_n \to B_{n-1} \to \cdots \to B_0 \to X \to 0$$
	where $B_l \in \Phi(\mathcal{X}), 0 \leq l \leq n-1$.
	Then we have a short exact sequence $0 \to K_0 \to B_0 \to X \to 0$.
	There are two cases:
	
	Case 1: for a source $i$, we have the following commutative diagram
	\begin{displaymath}
		\xymatrix{
			& 0 \ar[d]&0 \ar[d] &0 \ar[d] & \\
			0 \ar[r]& \underset{a\in Q_1^{^{*\to i}}}\coprod K_0(s(a)) \ar[d]^{\varphi_i^{K_0}} \ar[r] & \underset{a\in Q_1^{^{*\to i}}}\coprod B_0(s(a)) \ar[d] \ar[r] & \underset{a\in Q_1^{^{*\to i}}}\coprod X(s(a)) \ar[d] \ar[r] & 0 \\
			0 \ar[r]& K_0(i)  \ar[d] \ar[r] &B_0(i)  \ar[d] \ar[r] &X(i)  \ar[d] \ar[r] &0 \\
			0 \ar[r]& K_0(i)  \ar[d] \ar[r] &B_0(i)  \ar[d] \ar[r] &X(i)  \ar[d] \ar[r] &0 \\
			&0  &0  &0  & \\
		}
	\end{displaymath}
	Since $\underset{a\in Q_1^{^{*\to i}}}\coprod Y(s(a)) = 0$
	for any $Y \in {\rm Rep}(Q,\mathcal{A})$ and
	$B_0 \in \Phi(\mathcal{X})$, then $\varphi_i^{K_0}$ is a monomorphism
	and $\mathrm{C}_i(K_0) = K_0(i) \in \mathcal{X}_{n-1}$ by Lemmma \ref{06-2}(1).
	
	Case 2: for a vertex $i$ which is not a source,
	we have the following commutative diagram
	\begin{displaymath}
		\xymatrix{
			& &0 \ar[d] &0 \ar[d] & \\
			0 \ar[r]& \underset{a\in Q_1^{^{*\to i}}}\coprod K_0(s(a)) \ar[d]^{\varphi_i^{K_0}} \ar[r] & \underset{a\in Q_1^{^{*\to i}}}\coprod B_0(s(a)) \ar[d] \ar[r] & \underset{a\in Q_1^{^{*\to i}}}\coprod X(s(a)) \ar[d] \ar[r] & 0 \\
			0 \ar[r]& K_0(i)  \ar[d] \ar[r] &B_0(i)  \ar[d] \ar[r] &X(i)  \ar[d] \ar[r] &0 \\
			0 \ar[r]& D_0(i)  \ar[d] \ar[r] &C_0(i)  \ar[d] \ar[r] &C(i)  \ar[d] \ar[r] &0 \\
			&0  &0  &0  & \\
		}
	\end{displaymath}
	from which we can see $\varphi_i^{K_0}$ is a monomorphism
	by the Five lemma
	and $\mathrm{C}_i(K_0) = D_0(i) \in \mathcal{X}_{n-1}$
	by Lemmma \ref{06-2}(1).
	Then $K_0 \in \Phi(\mathcal{X}_{n-1}) \subseteq \Phi(\mathcal{X})_{n-1}$,
	and hence $X \in \Phi(\mathcal{X})_n$.
	This completes the proof.
\end{proof}

Let $Q$ be a left rooted quiver and let $\mathcal{A}$ be an abelian category
satisfying {\rm AB4} and {\rm AB4${^*}$} with enough projectives and injectives,
then Rep$(Q,\mathcal{A})$ is an abelian category with enough projectives and injectives
by Lemma \ref{Rep-1}.
If there is a hereditary Hovey triple
$\mathcal{M} = (\mathcal{C}, \mathcal{W}, \mathcal{F})$ in $\mathcal{A}$
such that the cotorsion pair $(\mathcal{C} \cap \mathcal{W}, \mathcal{F})$
is left extendable,
then there is a chain of hereditary Hovey triples
$$\mathcal{M}_{\mathrm{p},n} = (\mathcal{C}_{n}, \mathcal{W}, (\mathcal{C}\cap \mathcal{W})_{n} ^{\perp})
\quad (n \geq 0)$$
by Theorem \ref{modle structure}.
Moreover,
$\mathcal{M}$ and $\mathcal{M}_{\mathrm{p},n}$ in $\mathcal{A}$
induce respectively the hereditary Hovey triples
$$\Rep\text{-}(\mathcal{M}_{p}) = (\Phi(\mathcal{C}), \Rep(Q,\mathcal{W}), \Rep(Q, F))$$
and
$$\Rep\text{-}(\mathcal{M}_{p,n})=(\Phi(\mathcal{C}_n), \Rep(Q,\mathcal{W}), \Rep(Q, (\mathcal{C}\cap \mathcal{W})_{n} ^{\perp}))$$
in Rep$(Q,\mathcal{A})$ (see \cite[Theorem B]{D-O2021}).
Furthermore, the cotorsion pair
$$(\Phi(\mathcal{C}) \cap \Rep(Q,\mathcal{W}), \Rep(Q, F))
= (\Phi(\mathcal{C} \cap \mathcal{W}), \Rep(Q, F))$$
is left extendable by Lemmas \ref{Rep-2} and \ref{Rep-3},
then there is a chain of hereditary Hovey triples
$$(\Phi(\mathcal{C})_n, \Rep(Q,\mathcal{W}), (\Phi(\mathcal{C}) \cap \Rep(Q, \mathcal{W}))_{n} ^{\perp}))
\quad (n \geq 0)$$
in Rep$(Q,\mathcal{A})$ by Theorem \ref{modle structure}.
Now we have the following diagram:
\begin{displaymath}
	\xymatrix{
		\mathcal{M} = (\mathcal{C}, \mathcal{W}, \mathcal{F}) \ar[r]^{\mathrm{[6, Th\ B]}\hspace{26mm}} \ar[dd]^{\mathrm{Th\ 3.6}} & \Rep\text{-}(\mathcal{M}_{p})=(\Phi(\mathcal{C}), \Rep(Q,\mathcal{W}), \Rep(Q, F)) \ar[d]^{\mathrm{Th\ 3.6}} \\
		& (\Phi(\mathcal{C})_n, \Rep(Q,\mathcal{W}), (\Phi(\mathcal{C}) \cap \Rep(Q, \mathcal{W}))_{n} ^{\perp}) \ar@{==}[d]^{?} \\
		\mathcal{M}_{p,n} = (\mathcal{C}_{n}, \mathcal{W}, (\mathcal{C}\cap \mathcal{W})_{n} ^{\perp}) \ar[r]^{\mathrm{[6, Th\ B]}\hspace{24mm}} & \Rep\text{-}(\mathcal{M}_{p,n})=(\Phi(\mathcal{C}_n), \Rep(Q,\mathcal{W}), \Rep(Q, (\mathcal{C}\cap \mathcal{W})_{n} ^{\perp}))
	}
\end{displaymath}
From the proof of Lemma \ref{Rep-3} we notice that
$\Phi(\mathcal{C}_n) = \Phi(\mathcal{C})_n$,
which implies the following equality
$$(\Phi(\mathcal{C}_n), \Rep(Q,\mathcal{W}), \Rep(Q, (\mathcal{C}\cap \mathcal{W})_{n} ^{\perp})) 
 = (\Phi(\mathcal{C})_n, \Rep(Q,\mathcal{W}), (\Phi(\mathcal{C}) \cap \Rep(Q, \mathcal{W}))_{n} ^{\perp}).
$$
Finally, in the above diagram,
the homotopy categories of each column of the hereditary Hovey triples
are triangle equivalent by Theorem \ref{modle structure}.
Therefore, we have the following result.

\begin{thm}\label{Q-p}
	Let $Q$ be a left rooted quiver and
	let $\mathcal{A}$ be an abelian category
	satisfying {\rm AB4} and {\rm AB4${^*}$}
	with enough projectives and injectives.
	If there is a hereditary Hovey triple
	$$\mathcal{M} = (\mathcal{C}, \mathcal{W}, \mathcal{F})$$
	in $\mathcal{A}$ such that the cotorsion pair
	$(\mathcal{C} \cap \mathcal{W}, \mathcal{F})$ is left extendable
	then, for any integer $n\geq 0$,
	there is a hereditary Hovey triple in $\mathcal{A}$:
	$$\mathcal{M}_{\mathrm{p},n} = (\mathcal{C}_{n}, \mathcal{W}, (\mathcal{C}\cap \mathcal{W})_{n} ^{\perp}),$$
	which induces a hereditary Hovey triple in \Rep$(Q,\mathcal{A})$:
	\begin{eqnarray*}
		\Rep\text{-}(\mathcal{M}_{p,n}) & = & (\Phi(\mathcal{C}_n), \Rep(Q,\mathcal{W}), \Rep(Q, (\mathcal{C}\cap \mathcal{W})_{n} ^{\perp})) \\
		& = & (\Phi(\mathcal{C})_n, \Rep(Q,\mathcal{W}), (\Phi(\mathcal{C}) \cap \Rep(Q, \mathcal{W}))_{n} ^{\perp})
	\end{eqnarray*}
	with triangle equivalences
	\begin{eqnarray*}
		&   & \Ho(\Rep\text{-}\mathcal{M}_{\mathrm{p},0}) \hspace{3.4mm}
		\simeq \underline{\Phi(\mathcal{C})\cap \Rep(Q,\mathcal{F})} \\
		& = & \Ho(\Rep\text{-}(\mathcal{M}_{\mathrm{p},n}))
		\simeq \underline{\Phi(\mathcal{C}_n)\cap \Rep(Q, (\mathcal{C}\cap \mathcal{W})_{n} ^{\perp})}.
	\end{eqnarray*}
\end{thm}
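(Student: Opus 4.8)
The plan is to chase the square diagram displayed just before the statement and to show that its two routes produce one and the same hereditary Hovey triple in $\Rep(Q,\mathcal{A})$. First I would collect the ingredients on the $\mathcal{A}$-side: since $(\mathcal{C}\cap\mathcal{W},\mathcal{F})$ is left extendable, Theorem \ref{modle structure} tells us that $(\mathcal{C},\mathcal{W}\cap\mathcal{F})$ is left extendable as well and that, for every $n\geq 0$, $\mathcal{M}_{\mathrm{p},n}=(\mathcal{C}_n,\mathcal{W},(\mathcal{C}\cap\mathcal{W})_n^{\perp})$ is a hereditary Hovey triple in $\mathcal{A}$ (with $\mathcal{M}_{\mathrm{p},0}=\mathcal{M}$). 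Applying \cite[Theorem B]{D-O2021} to $\mathcal{M}$ and to $\mathcal{M}_{\mathrm{p},n}$ separately then yields the hereditary Hovey triples
$$\Rep\text{-}(\mathcal{M}_{\mathrm{p}})=(\Phi(\mathcal{C}),\Rep(Q,\mathcal{W}),\Rep(Q,\mathcal{F})),\qquad \Rep\text{-}(\mathcal{M}_{\mathrm{p},n})=(\Phi(\mathcal{C}_n),\Rep(Q,\mathcal{W}),\Rep(Q,(\mathcal{C}\cap\mathcal{W})_n^{\perp}))$$
in $\Rep(Q,\mathcal{A})$, which is an abelian (hence WIC exact) category with enough projectives and injectives by Lemma \ref{Rep-1}. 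This gives the two top routes of the diagram.

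Next I would run Theorem \ref{modle structure} inside $\Rep(Q,\mathcal{A})$, starting from the hereditary Hovey triple $\Rep\text{-}(\mathcal{M}_{\mathrm{p}})$. For this I must check that one of its defining cotorsion pairs, namely $(\Phi(\mathcal{C})\cap\Rep(Q,\mathcal{W}),\Rep(Q,\mathcal{F}))$, is left extendable. I would first observe that $\Phi(\mathcal{C})\cap\Rep(Q,\mathcal{W})=\Phi(\mathcal{C}\cap\mathcal{W})$: both classes are the left-hand side of a complete cotorsion pair in $\Rep(Q,\mathcal{A})$ whose right-hand side is $\Rep(Q,\mathcal{F})$ --- the former because $\Rep\text{-}(\mathcal{M}_{\mathrm{p}})$ is a Hovey triple, the latter by Lemma \ref{Rep-2}(1) applied to $(\mathcal{C}\cap\mathcal{W},\mathcal{F})$ --- and a cotorsion pair is determined by its right-hand class. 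Then Lemma \ref{Rep-3}, applied to $(\mathcal{X},\mathcal{Y})=(\mathcal{C}\cap\mathcal{W},\mathcal{F})$, shows that $(\Phi(\mathcal{C}\cap\mathcal{W}),\Phi(\mathcal{C}\cap\mathcal{W})^{\perp})=(\Phi(\mathcal{C}\cap\mathcal{W}),\Rep(Q,\mathcal{F}))$ is left extendable. Therefore Theorem \ref{modle structure} produces the chain of hereditary Hovey triples $(\Phi(\mathcal{C})_n,\Rep(Q,\mathcal{W}),(\Phi(\mathcal{C})\cap\Rep(Q,\mathcal{W}))_n^{\perp})$ together with triangle equivalences
$$\underline{\Phi(\mathcal{C})_n\cap(\Phi(\mathcal{C})\cap\Rep(Q,\mathcal{W}))_n^{\perp}}\ \simeq\ \Ho(\Rep\text{-}(\mathcal{M}_{\mathrm{p}}))\ \simeq\ \underline{\Phi(\mathcal{C})\cap\Rep(Q,\mathcal{F})}.$$

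It remains to verify the equality marked ``$?$'' in the diagram, i.e.\ that this last triple coincides with $\Rep\text{-}(\mathcal{M}_{\mathrm{p},n})$; this is the only step requiring genuine care. Here I would use the identity $\Phi(\mathcal{X})_n=\Phi(\mathcal{X}_n)$ proved inside the proof of Lemma \ref{Rep-3}. Applied to $\mathcal{X}=\mathcal{C}$ --- legitimate because $(\mathcal{C},\mathcal{W}\cap\mathcal{F})$ is complete and left extendable by the first paragraph --- it gives $\Phi(\mathcal{C})_n=\Phi(\mathcal{C}_n)$, matching the first components. Applied to $\mathcal{X}=\mathcal{C}\cap\mathcal{W}$ it gives $(\Phi(\mathcal{C})\cap\Rep(Q,\mathcal{W}))_n=\Phi(\mathcal{C}\cap\mathcal{W})_n=\Phi((\mathcal{C}\cap\mathcal{W})_n)$; feeding this into Lemma \ref{Rep-2}(1) applied to the complete cotorsion pair $((\mathcal{C}\cap\mathcal{W})_n,(\mathcal{C}\cap\mathcal{W})_n^{\perp})$ in $\mathcal{A}$ identifies $(\Phi(\mathcal{C})\cap\Rep(Q,\mathcal{W}))_n^{\perp}=\Phi((\mathcal{C}\cap\mathcal{W})_n)^{\perp}=\Rep(Q,(\mathcal{C}\cap\mathcal{W})_n^{\perp})$, matching the third components. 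The two presentations of $\Rep\text{-}(\mathcal{M}_{\mathrm{p},n})$ are thereby identified, the kernel appearing in the displayed equivalence becomes $\Phi(\mathcal{C}_n)\cap\Rep(Q,(\mathcal{C}\cap\mathcal{W})_n^{\perp})$, and $\Rep\text{-}(\mathcal{M}_{\mathrm{p}})=\Rep\text{-}(\mathcal{M}_{\mathrm{p},0})$, so the chain of triangle equivalences asserted in the theorem follows. I expect the main obstacle to be exactly this bookkeeping around the ``$?$'' equality --- checking that Lemma \ref{Rep-3} applies to both $\mathcal{C}$ and $\mathcal{C}\cap\mathcal{W}$ and that Lemma \ref{Rep-2}(1) is invoked at the correct $n$-truncated cotorsion pairs in $\mathcal{A}$ --- rather than any new conceptual input, since all the real model-categorical work has already been carried out in Theorem \ref{modle structure} and Lemma \ref{Rep-3}.
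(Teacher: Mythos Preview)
Your proposal is correct and follows essentially the same route as the paper: derive $\mathcal{M}_{\mathrm{p},n}$ from Theorem \ref{modle structure}, lift both $\mathcal{M}$ and $\mathcal{M}_{\mathrm{p},n}$ to $\Rep(Q,\mathcal{A})$ via \cite[Theorem B]{D-O2021}, verify left extendability of $(\Phi(\mathcal{C}\cap\mathcal{W}),\Rep(Q,\mathcal{F}))$ through Lemmas \ref{Rep-2} and \ref{Rep-3}, rerun Theorem \ref{modle structure} inside $\Rep(Q,\mathcal{A})$, and then close the square using the identity $\Phi(\mathcal{X}_n)=\Phi(\mathcal{X})_n$ established in the proof of Lemma \ref{Rep-3}. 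The only difference is that you match the third components directly (applying $\Phi(\mathcal{X}_n)=\Phi(\mathcal{X})_n$ a second time with $\mathcal{X}=\mathcal{C}\cap\mathcal{W}$ and then invoking Lemma \ref{Rep-2}(1)), whereas the paper simply observes that once the first components coincide and the second components are both $\Rep(Q,\mathcal{W})$, the two Hovey triples must agree; both arguments are fine.
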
	

Dually, we have

\begin{thm}\label{Q-i}
	Let $Q$ be a right rooted quiver and
	let $\mathcal{A}$ be an abelian category
	satisfying {\rm AB4} and {\rm AB4${^*}$}
	with enough projectives and injectives.
	If there is a hereditary Hovey triple
	$$\mathcal{M} = (\mathcal{C}, \mathcal{W}, \mathcal{F})$$
	in $\mathcal{A}$ such that the cotorsion pair
	$(\mathcal{C}, \mathcal{W} \cap \mathcal{F})$ is right extendable
	then, for any integer $n\geq 0$,
	there is a hereditary Hovey triple in $\mathcal{A}$:
	$$\mathcal{M}_{\mathrm{i},n} = (^{\perp}(\mathcal{W}\cap\mathcal{F})_{n}, \mathcal{W}, \mathcal{F}_{n}),$$
	which induces a hereditary Hovey triple in \Rep$(Q,\mathcal{A})$:
	\begin{eqnarray*}
		\Rep\text{-}(\mathcal{M}_{i,n}) & = & (\Rep(Q, {}^{\perp}(\mathcal{W}\cap \mathcal{F})_{n}), \Rep(Q,\mathcal{W}), \Psi(\mathcal{F}_n)) \\
		& = & (^{\perp}(\Rep(Q,\mathcal{W}) \cap \Psi(\mathcal{F}))_n, \Rep(Q,\mathcal{W}), \Psi(\mathcal{F})_n)
	\end{eqnarray*}
	with triangle equivalences
	\begin{eqnarray*}
		&   & \Ho(\Rep\text{-}\mathcal{M}_{\mathrm{i},0}) \hspace{3.4mm}
		\simeq \underline{\Rep(Q,\mathcal{C})\cap \Psi(\mathcal{F})} \\
		& = & \Ho(\Rep\text{-}(\mathcal{M}_{\mathrm{i},n}))
		\simeq \underline{\Rep(Q, {}^{\perp}(\mathcal{W}\cap \mathcal{F})_{n})\cap \Psi(\mathcal{F}_n)}.
	\end{eqnarray*}
\end{thm}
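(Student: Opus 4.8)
The statement is the formal dual of Theorem \ref{Q-p}: exchange left rooted quivers for right rooted ones, $\mathrm{AB4}$ for $\mathrm{AB4}^{*}$, the operator $\Phi$ for $\Psi$, ``$\Rep(Q,-)$ applied to the left class of a cotorsion pair'' for ``$\Rep(Q,-)$ applied to the right class'', and Theorem \ref{modle structure} for Theorem \ref{modle structure 1}. So the plan is to run the dual of the two-routes argument that precedes Theorem \ref{Q-p} and check that the resulting square of constructions commutes. First I would record the ambient facts: by Lemma \ref{Rep-1}, $\Rep(Q,\mathcal{A})$ is an abelian (hence WIC) category with enough projectives and injectives, so Theorem \ref{modle structure 1} is available in $\Rep(Q,\mathcal{A})$; and feeding $\mathcal{M}=(\mathcal{C},\mathcal{W},\mathcal{F})$ into \cite[Theorem B]{D-O2021} produces the hereditary Hovey triple $\Rep\text{-}(\mathcal{M}_i)=(\Rep(Q,\mathcal{C}),\Rep(Q,\mathcal{W}),\Psi(\mathcal{F}))$ in $\Rep(Q,\mathcal{A})$, whose right cotorsion pair is $(\Rep(Q,\mathcal{C}),\Rep(Q,\mathcal{W})\cap\Psi(\mathcal{F}))$. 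Applying Lemma \ref{Rep-2}(2) to the complete cotorsion pair $(\mathcal{C},\mathcal{W}\cap\mathcal{F})$ in $\mathcal{A}$ gives the complete cotorsion pair $(\Rep(Q,\mathcal{C}),\Psi(\mathcal{W}\cap\mathcal{F}))$; since a cotorsion pair is determined by its left class, $\Rep(Q,\mathcal{W})\cap\Psi(\mathcal{F})=\Psi(\mathcal{W}\cap\mathcal{F})$. The dual of Lemma \ref{Rep-3}, applied to the right-extendable pair $(\mathcal{C},\mathcal{W}\cap\mathcal{F})$, then shows that this right cotorsion pair $(\Rep(Q,\mathcal{C}),\Psi(\mathcal{W}\cap\mathcal{F}))=({}^{\perp}\Psi(\mathcal{W}\cap\mathcal{F}),\Psi(\mathcal{W}\cap\mathcal{F}))$ is right extendable, and en route it yields the identity $\Psi(\mathcal{Y}_m)=\Psi(\mathcal{Y})_m$ valid for any complete cotorsion pair $(\mathcal{X},\mathcal{Y})$ in $\mathcal{A}$ and every $m\geq0$.

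Granting this, Theorem \ref{modle structure 1} applied to $\Rep\text{-}(\mathcal{M}_i)$ in $\Rep(Q,\mathcal{A})$ produces, for each $n\geq0$, a hereditary Hovey triple with first class ${}^{\perp}(\Rep(Q,\mathcal{W})\cap\Psi(\mathcal{F}))_n$, middle class $\Rep(Q,\mathcal{W})$, last class $\Psi(\mathcal{F})_n$, together with its Frobenius description and the triangle equivalences of Theorem \ref{modle structure 1}(2), where the final identification with $\underline{\Rep(Q,\mathcal{C})\cap\Psi(\mathcal{F})}$ is the $\mathcal{C}\cap\mathcal{F}$-description from \cite[Theorem 6.21]{Sto201401}. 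For the other route, Theorem \ref{modle structure 1} applied to $\mathcal{M}$ inside $\mathcal{A}$ gives $\mathcal{M}_{i,n}=({}^{\perp}(\mathcal{W}\cap\mathcal{F})_n,\mathcal{W},\mathcal{F}_n)$, and \cite[Theorem B]{D-O2021} applied to $\mathcal{M}_{i,n}$ gives $\Rep\text{-}(\mathcal{M}_{i,n})=(\Rep(Q,{}^{\perp}(\mathcal{W}\cap\mathcal{F})_n),\Rep(Q,\mathcal{W}),\Psi(\mathcal{F}_n))$. To see these two induced triples coincide I would invoke $\Psi(\mathcal{F}_n)=\Psi(\mathcal{F})_n$ --- the case $\mathcal{Y}=\mathcal{F}$ of the identity above, legitimate since $(\mathcal{C}\cap\mathcal{W},\mathcal{F})$ is a complete cotorsion pair (indeed also right extendable, by Theorem \ref{modle structure 1}(1)) --- so the last classes agree; the middle classes are $\Rep(Q,\mathcal{W})$ in both; and for the first classes one computes ${}^{\perp}(\Rep(Q,\mathcal{W})\cap\Psi(\mathcal{F}))_n={}^{\perp}\Psi(\mathcal{W}\cap\mathcal{F})_n={}^{\perp}\Psi((\mathcal{W}\cap\mathcal{F})_n)=\Rep(Q,{}^{\perp}(\mathcal{W}\cap\mathcal{F})_n)$, the last step by Lemma \ref{Rep-2}(2) applied to the complete cotorsion pair $({}^{\perp}(\mathcal{W}\cap\mathcal{F})_n,(\mathcal{W}\cap\mathcal{F})_n)$ supplied by right extendability of $(\mathcal{C},\mathcal{W}\cap\mathcal{F})$.

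Finally, Theorem \ref{modle structure 1}(2) for $\Rep\text{-}(\mathcal{M}_i)$ gives that all the homotopy categories $\Ho(\Rep\text{-}(\mathcal{M}_{i,n}))$ agree with $\Ho(\Rep\text{-}(\mathcal{M}_{i,0}))=\Ho(\Rep\text{-}(\mathcal{M}_i))$, and the triangle equivalences rewrite, via the coincidences just established, into the displayed chain $\Ho(\Rep\text{-}\mathcal{M}_{i,0})\simeq\underline{\Rep(Q,\mathcal{C})\cap\Psi(\mathcal{F})}=\Ho(\Rep\text{-}(\mathcal{M}_{i,n}))\simeq\underline{\Rep(Q,{}^{\perp}(\mathcal{W}\cap\mathcal{F})_n)\cap\Psi(\mathcal{F}_n)}$. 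I expect the only real obstacle to be the bookkeeping: verifying that $\Psi$-of-a-dimension equals dimension-of-a-$\Psi$, and that $\Rep(Q,-)$-of-a-dimension equals left-perp-of-a-dimension, so that the two constructions name literally the same subcategories. The one substantive ingredient, $\Psi(\mathcal{Y}_n)=\Psi(\mathcal{Y})_n$, is just the dual of what is proved inside Lemma \ref{Rep-3}; everything else is an application of an already-cited result, so in practice one would simply remark that the proof is the formal dual of that of Theorem \ref{Q-p} and leave the dualization to the reader.
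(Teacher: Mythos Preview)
Your proposal is correct and is precisely the dualization the paper has in mind: the paper gives no separate proof of Theorem \ref{Q-i}, stating only ``Dually, we have'', so the argument is exactly the formal dual of the discussion preceding Theorem \ref{Q-p}, with Lemma \ref{Rep-2}(2), the dual of Lemma \ref{Rep-3} (yielding $\Psi(\mathcal{Y}_n)=\Psi(\mathcal{Y})_n$), Theorem \ref{modle structure 1}, and \cite[Theorem B]{D-O2021} playing the corresponding roles. Your bookkeeping of the two routes and the identifications $\Psi(\mathcal{F}_n)=\Psi(\mathcal{F})_n$ and $\Rep(Q,\mathcal{W})\cap\Psi(\mathcal{F})=\Psi(\mathcal{W}\cap\mathcal{F})$ is exactly what the dual square requires.
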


\bigskip \centerline {\bf ACKNOWLEDGEMENTS}
This work is supported by the National Natural Science Foundation of China
(Grant No. 12361008), the Jiangsu Provincial Scientific Research Center of Applied Mathematics (No. BK20233002),
and the Foundation for Innovative Fundamental Research Group Project of Gansu Province (Grant no. 23JRRA684).

\end{document}